\theoremstyle{plain}
\newtheorem{thm}{Theorem}[section]
\newtheorem{prop}[thm]{Proposition}
\newtheorem{lemma}[thm]{Lemma}
\newtheorem{cor}[thm]{Corollary}
\theoremstyle{definition}
\newtheorem{defi}[thm]{Definition}
\theoremstyle{remark}
\newtheorem{remark}[thm]{Remark}
\newtheorem{ep}[thm]{Example}
\newcommand{\ZZ}{\ensuremath{\mathbb Z}}
\newcommand{\RR}{\ensuremath{\mathbb R}}
\newcommand{\g}{\ensuremath{\mathfrak{g}}}
\newcommand{\h}{\ensuremath{\mathfrak{h}}}
\newcommand{\bt}{\mathbf{t}}                  
\newcommand{\bs}{\mathbf{s}}                  
\newcommand{\cN}{\mathcal{N}}
\newcommand{\cL}{\mathcal{L}}
\newcommand{\cD}{\mathcal{D}}
\newcommand{\cI}{\mathcal{I}}
\newcommand{\cM}{\mathcal{M}}
\newcommand{\cC}{\mathcal{C}}
\newcommand{\cA}{\mathcal{A}}
\newcommand{\cS}{\mathcal{S}}
\newcommand{\cE}{\mathcal{E}}
\newcommand{\cG}{\mathcal{G}}
\newcommand{\un}{\underline}
\newcommand{\tGamma}{\tilde{\Gamma}}
\newcommand{\pd}[1]{\frac{\partial}{\partial #1}} 
\newcommand{\bbC}{\bar{\bar{\cC}}}
\newcommand{\bbS}{\bar{\bar{\cS}}}
\newcommand{\ndash}{\nobreakdash-\hspace{0pt}}
\begin{document}

\title{A supergeometric approach to Poisson reduction}
\author{A.S. Cattaneo}
\address{Institut f\"ur Mathematik, Universit\"at Z\"urich-Irchel, Winterthurerstr. 190, CH-8057 Z\"urich, Switzerland}
\email{alberto.cattaneo@math.uzh.ch}
\author{M. Zambon}
\address{Universidade do Porto, Departamentos de Matematica Pura, Rua do Campo Alegre 687, 4169-007 Porto, Portugal} \email{mzambon@fc.up.pt, marco.zambon@uam.es}
\thanks{2010 Mathematics Subject Classification:   primary  53D17,
58A50,
secondary 18D35.
}

\begin{abstract}
This work introduces
a unified approach to the reduction of Poisson manifolds using their description by graded symplectic manifolds. This yields a
generalization of the classical Poisson reduction by distributions and allows one to  construct
actions of strict Lie 2-groups and to describe the corresponding reductions.
\end{abstract}
\maketitle

\setcounter{tocdepth}{1} 
\tableofcontents

\section{Introduction}\label{intro}
 
Many geometric structures (Poisson, Courant, generalized complex, 
\dots) may equivalently
be described in terms of graded symplectic manifolds endowed with functions satisfying structural
equations expressed in terms of the Poisson bracket. (Recall that a graded manifold is a supermanifold with a refined $\ZZ$\ndash grading,
i.e., a $\ZZ$\ndash grading whose reduction is the supermanifold $\ZZ_2$\ndash grading).

Reduction of these structures may then be understood as graded symplectic reduction compatible with the functions
and the structural equations. The advantage of this viewpoint is that in the (graded) symplectic world there is
only one reduction: namely, that of (graded) presymplectic submanifolds. Recall that a submanifold is presymplectic
if the restriction of the symplectic form has constant rank, and the reduction is by its kernel, also called
the characteristic distribution, which is always involutive. Functions whose restriction is invariant under this distribution descend to the quotient.
If the structural equations hold on the quotient, the geometric structure has been successfully reduced. This unified point of view
clarifies the various (often ad hoc) known reduction procedures and introduces new ones. Observe that (graded) presymplectic (or coisotropic)
submanifolds may also be induced by symplectic actions of (graded) Lie groups. This also defines suitable extensions of group actions
on the manifold carrying the geometric structure.

One step of our work consists of translating facts about graded (sub)manifolds into the usual language of differential geometry.
The second step consists of understanding (and translating) the conditions under which  the structural equations descend.
Observe that a sufficient condition is that the Hamiltonian vector fields of the functions defining the geometric structure
be tangent to the submanifold.
This very strong condition (which implies invariance and is equivalent to it in the coisotropic case) is far from being necessary.
However, various weaker sufficient conditions may be worked out.

In 
\cite{BCMZ}, which provided inspiration for the present paper,  this method is  applied  to the reduction of Courant algebroids and of generalized complex structures, 
recovering and extending the various known results. 
In this paper we concentrate on Poisson manifolds. In \textbf{Part 1} we recover the usual reduction of coisotropic and pre-Poisson submanifolds (\cite{CF, CZ}, see also \cite{MR}), 
the Marsden--Ratiu reduction \cite{MR} and various generalizations thereof which also go beyond the ones discussed in \cite{FZ}.
As an application, we obtain the somewhat surprising result that every Poisson manifold may be obtained by generalized reduction
from its cotangent bundle with canonical symplectic structure, see Section~\ref{sec:exdistr}.
In \textbf{Part 2} of the paper we deduce a generalized notion of compatible  actions.
The objects that act infinitesimally are certain DGLAs which correspond to crossed modules of Lie algebras (see Appendix \ref{A}). The corresponding global actions are by Lie 2-groups. Further, the global actions are Lie group actions in the category of groupoids rather than in the category of smooth manifolds, i.e. they are a categorification -- in the sense of Baez and Dolan \cite{BaezDolan} -- of the usual notion of Lie group action.
When the Lie 2-groups that act are just  ordinary Lie groups, several of our statements about actions and reductions specialize to some of the results of  \cite{FOR}.

We use graded geometry as a unifying guide to obtain the results of this paper. However, the results themselves may eventually be expressed using standard
differential geometry. The reader who is not interested in their derivation may see the classical statements directly in Prop.~\ref{prescase}
and in Thms.~\ref{A1} and~\ref{A2}  as well as
Prop.~\ref{glocM},
Prop.~\ref{groidgl},  Prop.~\ref{gloGamma} and Thm.~\ref{catgroupoact} (using Cor.~\ref{summ} to translate the assumptions into classical data).\\

\noindent\textbf{Plan of the paper.} In \textbf{Part 1} of this paper we obtain a statement about the reduction of Poisson manifolds $(M,\pi)$, where the input data is a submanifold of $M$ endowed with a suitable ``distribution''. In graded terms the Poisson manifold $M$ corresponds to a degree $1$ symplectic manifold $\cM$, and the input data to certain submanifolds of $\cM$. In Section \ref{sec:coiso} we consider coisotropic submanifolds of $\cM$ and obtain Prop.~\ref{coisocase}. To obtain a more general reduction statement, in Section \ref{sec:pres} we are forced to consider presymplectic submanifolds of $\cM$. Studying their geometry we obtain 
 Prop.~\ref{prescase}, a reduction statement for Poisson manifolds which  essentially amounts to the well-known Marsden--Ratiu theorem \cite{MR}. In Section \ref{sec:stages} we perform reduction in stages of presymplectic submanifolds of $\cM$ and obtain Thm.~\ref{A2},
which generalizes and improves  the Marsden--Ratiu theorem and Falceto's results with the second author \cite{FZ}
because it requires weaker assumptions. Finally in Section \ref{sec:exdistr} we present several examples in which the Poisson manifold $M$ is a cotangent bundle or the dual of a Drinfeld double.

In \textbf{Part 2} we consider  actions on the degree $1$  symplectic manifold $\cM$, which in turn induce actions on the symplectic groupoid $\Gamma$ of the Poisson manifold $(M,\pi)$.
In Sections \ref{actions}--\ref{sec:hamred} we consider an infinitesimal action on $\cM$ (which turns out to correspond to nice classical data), the corresponding global action,
 construct the global and
Marsden--Weinstein quotients of $\Gamma$, and translate them into classical terms.
In Section \ref{sec:groupGamma} we observe that an action on $\cM$ induces  an action on the symplectic groupoid $\Gamma$
in an interesting fashion. In Sections \ref{sec:glq} and \ref{Sec:MWGamma}
we construct its global and Marsden--Weinstein quotients. 
In Section \ref{sec:cga} we show that,  interestingly, the object acting on $\Gamma$ is not just a Lie group but rather a Lie 2-group, and that the action is  an action
in the category of Lie groupoids. In the forthcoming work  \cite{ZZL} by Zhu and the second author, the construction of the Lie 2-group action (Thm. \ref{catgroupoact}) is given a more conceptual explanation and extended to actions on any integrable Lie algebroid.
Finally \ref{secex} contains examples of the actions considered and their reductions. The Appendix collects some known facts about crossed modules and 2-groups.\\

\noindent\textbf{Notation.}
$M$ always denotes a smooth manifold. If $C$ is a submanifold of $M$ we denote its conormal bundle by
$N^*C:=\{\xi \in T^*M|_C: \langle \xi, TC \rangle =0\}$. If $E$ is a subbundle of $TM$ over $C$,  we use the notation $\tilde{\Gamma}(E)$
to denote sections of the vector bundle $TM$ which on $C$ lie in $E$, and  $C^{\infty}_E(M)$ to denote the functions on $M$ whose differential, at points of $C$, annihilates $E$. The notation $\un{C}$   denotes a natural quotient of $C$.
 If $M$
 is endowed with a Poisson bivector $\pi$, we denote by  $\sharp: T^*M \rightarrow TM$
the contraction $\xi \mapsto \pi(\xi)$. 
 
As a general rule, we denote objects related to graded manifolds by script letters.
$\cM$ is always a symplectic graded manifold, whose algebra of functions of degree $i$ we denote by $C^{\infty}_i(\cM)$ and
whose Poisson bracket we denote by $\{\cdot,\cdot\}$. 
If $\cI$ is a homogeneous ideal of functions on $\cM$, we denote its Poisson normalizer by $\cN(\cI)$.\\

\noindent\textbf{Acknowledgments.}
We are grateful to Henrique Bursztyn, Rajan Mehta and Dimitry Roytenberg for useful discussions and to Florian Sch\"atz and James Stasheff for comments that helped improve this manuscript. 
A.S.C.\ thanks the Centre de Recerca Matematica (Barcelona)
for hospitality. M.Z. also thanks Jo\~ao Martins and Chenchang Zhu for several useful explanations.
This work has been partially supported by SNF Grant 200020-121640/1, by
the European Union through the FP6 Marie Curie RTN ENIGMA (contract
number MRTN-CT-2004-5652),  by the European Science Foundation
through the MISGAM program, and Grant SB2006-0141(Spanish MEC).
Further M.Z. was partially supported by the Centro de Matem\'atica da Universidade do Porto, financed by FCT through the programs POCTI and POSI, and by the FCT program Ciencia 2007.


\section{The graded geometric description of Poisson manifolds}

Before describing how graded geometry can be used to perform reduction of Poisson manifolds,
we need to recall some notions of graded geometry. { The general notion of graded manifold was introduced by Voronov \cite{MR1958834}, and the related notion of N-manifold by \v{S}evera \cite{SWLett}.}
We will be extremely brief, since   we will need only graded manifolds of a very special form, for which we will provide explicit descriptions. 
\cite[Section 2]{Dima} and 
\cite[Sections 2--4]{CZbilbao} for more details.

Let $E=\oplus_{i<0}E_i$ be a graded vector bundle over a manifold
$M$, graded by negative integers $i$. The corresponding
\emph{N-manifold} is specified by the manifold $M$ and the graded
commutative algebra $\Gamma(S^{\bullet} E^*)$, the \emph{functions} on
the N-manifold, where $S^{\bullet}E^*$ denotes the graded symmetric algebra of $E^*$.
 The \emph{degree} of the N-manifold is the negative of the smallest $i$
appearing in the above direct sum. In this note we  consider only
N-manifolds of degree 1. When we endow them with graded symplectic
forms   which induce degree $-1$ Poisson brackets on the
graded algebra of functions, we speak of \emph{symplectic
N-manifolds of degree $1$}.

Let $M$ be a smooth manifold. We denote by $T^*[1]M$ the graded vector bundle $E$ over $M$ with $E_{-1}=T^*M$ and $E_i=\{0\}$ for $i\le -2$.
The N-manifold $T^*[1]M$ is canonically a symplectic
N-manifold of degree $1$, and all symplectic N-manifolds of degree $1$ arise this
way  {\cite{MR1230027}}.
{(For more information on the geometry of odd symplectic manifolds, see \cite{MR1413510}\cite{MR1249600}.)
}
The  algebra of functions $C^{\infty}(T^*[1]M)$ is
given by the multivector fields on $M$, and  the  Poisson bracket
$\{\cdot,\cdot\}$ is the Schouten bracket. 
Recall that a  \emph{Poisson structure} on $M$ is a bivector field $\pi \in \Gamma(\wedge^2TM)$ such that $[\pi,\pi]=0$. (This condition is equivalent to the fact that the bracket $\{f,g\}_M:=\pi(df,dg)$ on $C^{\infty}(M)$ satisfies the Jacobi identity.) A Poisson structure on $M$
can be regarded as a degree $2$ function $\cS$ on $T^*[1]M$ which Poisson commutes with itself.
The Poisson bracket on $M$ is recovered by a derived bracket construction:
\begin{equation}
\{f,g\}_M=\{\{\cS,f\},g\}
\end{equation}

We record this {\cite[Section 3.1]{MR1427124}}:
\begin{prop}\label{dimitri} There is a one-to-one correspondence between:
\begin{itemize}
\item  Poisson manifolds
\item     symplectic
N-manifolds of degree $1$, endowed with a degree $2$ function $\cS$ satisfying $\{\cS,\cS\}=0$.
\end{itemize}
 \end{prop}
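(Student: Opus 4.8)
The plan is to reduce the entire statement to the model case $\cM = T^*[1]M$ and then match up the self-commuting degree $2$ functions there with the Poisson bivectors on $M$. The backbone is the classification already quoted from \cite[Prop. 3.1]{Dima}: every symplectic N-manifold of degree $1$ is isomorphic to $T^*[1]M$, where $M$ is its underlying manifold, and under this identification the algebra of functions is the algebra of multivector fields on $M$ with the Schouten bracket playing the role of $\{\cdot,\cdot\}$. So it suffices to analyze degree $2$ functions on $T^*[1]M$.

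First I would pin down the degree bookkeeping. Since $T^*[1]M$ has $E_{-1}=T^*M$, its functions are $\Gamma(S^{\bullet}E^*)$ with $E^*=TM$ concentrated in the odd degree $1$; the graded symmetric algebra on odd generators is the exterior algebra, so the degree $k$ component $C_k(T^*[1]M)$ is exactly $\Gamma(\wedge^k TM)$, the $k$-vector fields. In particular the degree $2$ functions are precisely the bivector fields $\pi\in\Gamma(\wedge^2 TM)$, and this assignment $\cS\leftrightarrow\pi$ is a linear isomorphism.

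Next I would transport the self-commuting condition. Under the identification of $\{\cdot,\cdot\}$ with the Schouten bracket, $\{\cS,\cS\}$ corresponds to $[\pi,\pi]$ (up to the fixed sign convention), which lives in degree $3$, i.e. in $\Gamma(\wedge^3 TM)$. Hence $\{\cS,\cS\}=0$ holds if and only if $[\pi,\pi]=0$, which is by definition the condition that $\pi$ be a Poisson structure. Combined with the equivalence between $[\pi,\pi]=0$ and the Jacobi identity for $\{f,g\}_M:=\pi(df,dg)$ recorded above, this shows that self-commuting degree $2$ functions on $T^*[1]M$ are in natural bijection with Poisson structures on $M$, and the derived bracket formula $\{f,g\}_M=\{\{\cS,f\},g\}$ confirms that the two encode the same bracket.

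Finally I would assemble the two-sided correspondence: send a Poisson manifold $(M,\pi)$ to the pair $(T^*[1]M,\cS_\pi)$, where $\cS_\pi$ is the degree $2$ function corresponding to $\pi$; conversely, send a symplectic N-manifold of degree $1$ equipped with self-commuting $\cS$ to its base $M$ with the bivector $\pi$ read off from $\cS$. The steps above show each assignment is well defined and that they are mutually inverse. The only point requiring genuine care -- the main (and mild) obstacle -- is that the model identification in \cite[Prop. 3.1]{Dima} is canonical only up to isomorphism, so one must check that the bijection is stated at the level of the appropriate equivalence classes and is independent of the chosen symplectomorphism with $T^*[1]M$; this follows because the bracket-preserving identification of functions with multivector fields is itself natural.
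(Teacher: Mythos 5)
Your proposal is correct and follows essentially the same route the paper takes: the paper defers to \cite[Prop.~4.1]{Dima} for the proof, but its preceding discussion is exactly your argument---classify degree~$1$ symplectic N-manifolds as $T^*[1]M$ via \cite[Prop.~3.1]{Dima}, identify functions with multivector fields and the graded bracket with the Schouten bracket, and match self-commuting degree~$2$ functions with bivectors satisfying $[\pi,\pi]=0$. Your closing remark on independence of the chosen symplectomorphism is a reasonable extra precision that the paper leaves implicit.
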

\vspace{5mm}
\part{Reduction of Poisson manifolds by distributions}

\section{The main idea}\label{mainidea}

Our aim is to give a procedure which, starting with a Poisson manifold $(M,\pi)$ and 
making certain choices, allows one to construct   a new Poisson manifold.
The   one-to-one correspondence given by Prop. \ref{dimitri} allows us to phrase the problem as follows:
starting from a pair $(\cM,\cS)$ consisting of a symplectic
N-manifold  of degree $1$  and a degree $2$ self-commuting function $\cS$ on it, making certain choices, construct another such pair. There is a straightforward approach to the latter problem. 

In geometric terms it reads:
\begin{itemize}
\item[(a)] Take a graded presymplectic submanifold $\cC$ of $\cM$ so that the quotient $\un{\cC}$ by its characteristic distribution is smooth. Then $\un{\cC}$ is necessarily a symplectic
N-manifold  of degree $1$, hence of the form $T^*[1]X$ for some manifold $X$.
\item[(b)] Assume that 
\begin{itemize}
\item[1)] $\cS|_{\cC}$ descend to a function $\un{\cS}$ on the quotient $\un{\cC}$
\item[2)] $\un{\cS}$ Poisson commutes with itself.
\end{itemize}
Then $\un{\cS}$ corresponds to a Poisson bivector field on $X$.
\end{itemize}

Of course, since N-manifolds are defined in terms of graded commutative algebras (the ``functions''), all of the above has to be carried out in algebraic terms.  
\begin{itemize}
\item[(a)] A graded presymplectic submanifold   of $\cM$ is defined by a  homogeneous multiplicative ideal $\cI$ of $C^{\infty}(\cM)$. The quotient of the submanifold is defined by the graded Poisson algebra
$\cN(\cI)/(\cN(\cI) \cap \cI)$. Here $\cN(\cI)$ is the Poisson-normalizer of $\cI$ in $C^{\infty}(\cM)$.
\item[(b)] The assumptions on $\cS$ amount to 
\begin{itemize}
\item [1)] $\cS \in \cN(\cI)+\cI$ 
\item [2)] Conditions on ${\cS}$ guaranteeing that $\un{\cS}$ Poisson commutes with itself.
\end{itemize}
\end{itemize}

In the next Sections we will make  the above algebraic procedure explicit and show that it really corresponds to the geometric approach we outlined.
The hardest part will be finding \emph{sufficient} conditions on ${\cS}$ guaranteeing that $\un{\cS}$ Poisson commutes with itself.

\section{Coisotropic submanifolds}\label{sec:coiso}

In this Section we perform the simplest kind of graded symplectic reduction, namely that of coisotropic submanifolds. This will serve as a warm-up, since in later Sections we will obtain strictly stronger results.

Let $M$ be a smooth manifold. A \emph{graded submanifold}   $\cC$ of $\cM:=T^*[1]M$ is given by a homogeneous graded
ideal $\cI$ in $C^{\infty}(\cM)$ satisfying a smoothness property (Def. 7 of \cite{CZbilbao}), so it is generated by 
\begin{equation}\label{Izero}
\cI_0=Z(C):=\{f\in C^{\infty}(M):f|_C=0\}
\end{equation}
and
\begin{equation}\label{Ione}
\cI_1=\tilde{\Gamma}(E):=\{X\in
{\Gamma}(TM):X|_C\subset E\} 
\end{equation}
 for some closed submanifold $C\subset M$ and some
vector subbundle $E\rightarrow C$ of $TM\rightarrow M$. In other words, $\cC=E^{\circ}[1]$,
where $E^{\circ}\subset T^*M|_C$ denotes the annihilator of $E$.
Here and in the sequel we use the notation $\tilde{\Gamma}(\bullet)$
to denote sections of the vector bundle $TM$ which restrict to
sections of the subbundle $\bullet$.

 Denote by
$\cN(\cI)$ the Poisson normalizer of $\cI$, i.e., the set of
functions $\phi \in C^{\infty}(\cM)$ satisfying $\{\phi, \cI\}\subset \cI$.
 We have
\begin{equation}\label{NIzero}
 \cN(\cI)_0=\{f\in C^{\infty}(M):df|_C\subset
 E^{\circ}\}=:C^{\infty}_E(M)
\end{equation}
and
\begin{equation}\label{NIone}
\cN(\cI)_1=\{X\in \tilde{\Gamma}(TC):[X,\tilde{\Gamma}(E)]\subset \tilde{\Gamma}(E)\}.
\end{equation}
{Following the standard definition of coisotropic submanifold in Poisson geometry we define:}
\begin{defi}
The submanifold $\cC$ of $\cM$ is \emph{coisotropic} if its vanishing ideal
$\cI$ is closed under the Poisson bracket.
\end{defi} 

Now we assume that $\cC$ is a coisotropic submanifold, and spell out the coisotropicity condition.
By degree reasons
$\{\cI_0,\cI_0\}$ always vanishes. If  $X\in
\cI_1=\tilde{\Gamma}(E)$  and $f\in \cI_0=Z(C)$ we have
$\{f,X\}=-X(f)$. So $\{\cI_0,\cI_1\}\subset \cI_0$ is equivalent to
$E\subset TC$. If  $X,Y\in \cI_1$ then $\{X,Y\}=[X,Y]$, so
$\{\cI_1,\cI_1\}\subset \cI_1$ is equivalent to the involutivity of
the distribution $E$ on $C$.
Since by construction $\cI$ is a Poisson ideal in the Poisson algebra $\cN(\cI)$, the Poisson bracket descends making $\cN(\cI)/\cI$ into  a graded Poisson algebra.
 In degree $0$ it consists of the $E$-invariant functions on $C$, so let us assume that the quotient $\un{C}$ of $C$ by the foliation integrating $E$ be a smooth manifold (so that the projection map is a submersion).
In degree $1$, $\cN(\cI)/\cI$ consists of vector fields on $C$ which are
projectable w.r.t. the projection $C \rightarrow \un{C}$, modulo
vector fields lying in the kernel $E$ of the projection. In other words
$(\cN(\cI)/\cI)_1$ is isomorphic to the space of vector fields on
$\un{C}$. We conclude\footnote{One actually needs to justify why the algebra $\cN(\cI)/\cI$ is generated by elements in degree $0$ and $1$; this is done in more generality in the proof of Prop. \ref{pres2}.}
 that $\cN(\cI)/\cI$ is the graded Poisson
algebra of functions on a graded symplectic manifold if{f} $\un{C}$ is smooth, and
in that case it is the graded
 Poisson algebra of functions on
$T^*[1]\un{C}$.

Now we introduce a new piece of data: a Poisson bivector field $\pi$ on $M$. In graded terms it correspond to a
degree 2 function $\cS$ on $\cM$ satisfying $\{\cS,\cS\}=0$, see Prop. 
\ref{dimitri}. The function $\cS$ induces a function $\un{\cS}$ on $T^*[1]\un{C}$ if{f} $\cS\in \cN(\cI)$. In that case, by the way we defined the bracket on $\cN(\cI)/\cI$, it is clear that $\un{\cS}$ commutes with itself. Hence we obtain a reduced Poisson structure on $\un{C}$.
We spell out what it means for $\cS$ to lie in $\cN(\cI)$. Since for any function $f$ on $M$ we have  $\{\cS,f\}=[\pi,f]=\sharp df$, the condition $\{\cS,\cI_0\}\subset \cI_1$
is equivalent to $\sharp N^*C\subset E$. Here $\sharp: T^*M \rightarrow TM$
denotes contraction with the bivector $\pi$ and $N^*C:=\{\xi \in T^*M|_C: \langle \xi, TC \rangle =0\}$. Notice that in particular $C$ is a coisotropic submanifold of $(M,\pi)$. Further, for any vector field $X$ on $M$,
$\{\cS,X\}=[\pi,X]=-\cL_X \pi$, so
  $\{\cS,\cI_1\}\subset \cI_2$ means $(\cL_X \pi)|_C \in \Gamma(E\wedge TM|_C)$ for any $X\in \tilde{\Gamma}(E)$, which using eq. \eqref{Xfg} below is equivalent to $C^{\infty}_E(M)$ being closed under the Poisson bracket of $M$.

We summarize:
\begin{prop}\label{coisocase}Let $M$ be a smooth manifold.
A coisotropic submanifold $\cC$ of $T^*[1]M$ corresponds to a
submanifold $C$ of $M$ endowed with an integrable distribution $E$.
The coisotropic quotient of $\cC$ is smooth if{f} $\un{C}=C/E$ is
smooth, and in that case the coisotropic quotient is canonically
symplectomorphic to $T^*[1]\un{C}$.

Let $M$ be endowed with a Poisson structure $\pi$.
 The corresponding function $\cS$ on $\cM$
descends to a degree 2 self-commuting function on $T^*[1]\un{C}$
(which therefore corresponds to a Poisson structure on $\un{C}$)
if{f} $\sharp N^*C\subset E$ and $C^{\infty}_E(M)$ is closed under
the Poisson bracket.
\end{prop}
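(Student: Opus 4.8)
The plan is to carry everything out algebraically, working throughout with the homogeneous ideal $\cI$ generated by $\cI_0=Z(C)$ and $\cI_1=\tGamma(E)$ as in \eqref{Izero}--\eqref{Ione}, and with its Poisson normalizer $\cN(\cI)$, whose pieces in degrees $0$ and $1$ are recorded in \eqref{NIzero}--\eqref{NIone}. For the first assertion I would unwind the coisotropicity condition $\{\cI,\cI\}\subset\cI$ on generators, and then compute the induced graded Poisson algebra $\cN(\cI)/\cI$ degree by degree, identifying it with the functions on $T^*[1]\un{C}$.

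Concretely, since $\{\cI_0,\cI_0\}$ vanishes for degree reasons, coisotropicity reduces to two conditions. Using $\{f,X\}=-X(f)$ for $f\in\cI_0$ and $X\in\cI_1$, the inclusion $\{\cI_0,\cI_1\}\subset\cI_0$ is equivalent to $E\subset TC$; using $\{X,Y\}=[X,Y]$, the inclusion $\{\cI_1,\cI_1\}\subset\cI_1$ is equivalent to involutivity of $E$. This establishes the claimed correspondence $\cC\leftrightarrow(C,E)$ with $E$ an integrable distribution. For the quotient, $\cI$ is a Poisson ideal in $\cN(\cI)$, so the bracket descends to $\cN(\cI)/\cI$. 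In degree $0$ this is $C^{\infty}_E(M)/Z(C)$, the $E$\ndash invariant functions on $C$, i.e. $C^{\infty}(\un{C})$ once $\un{C}=C/E$ is a smooth manifold; in degree $1$ it consists of the sections of $TC$ that normalize $E$, hence are projectable, modulo those valued in $E$, i.e. vector fields on $\un{C}$. Granting that $\cN(\cI)/\cI$ is generated in degrees $0$ and $1$ (proved in greater generality in Prop.~\ref{pres2}), it is therefore the algebra of functions on $T^*[1]\un{C}$, with the descended bracket being the canonical one; smoothness of the quotient is governed precisely by smoothness of $C/E$.

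For the second assertion I would introduce the degree $2$ function $\cS$ with $\{\cS,\cS\}=0$ corresponding to $\pi$ via Prop.~\ref{dimitri}. Since the bracket on $\cN(\cI)/\cI$ is induced from that on $\cM$, as soon as $\cS\in\cN(\cI)$ its class $\un{\cS}$ is well defined and automatically satisfies $\{\un{\cS},\un{\cS}\}=0$, hence defines a Poisson structure on $\un{C}$. It then remains to spell out $\cS\in\cN(\cI)$, i.e. $\{\cS,\cI\}\subset\cI$, on the generators; because $\{\cS,\cdot\}$ is a derivation it suffices to test $\cI_0$ and $\cI_1$. From $\{\cS,f\}=[\pi,f]=\sh\,df$ one sees that $\{\cS,\cI_0\}\subset\cI_1$ amounts to $\sh(df|_C)\in E$ for every $f\in Z(C)$, that is $\sh N^*C\subset E$ (which in particular forces $C$ to be coisotropic in $(M,\pi)$). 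From $\{\cS,X\}=[\pi,X]=-\cL_X\pi$ one sees that $\{\cS,\cI_1\}\subset\cI_2$ amounts to $(\cL_X\pi)|_C\in\Gamma(E\wedge TM|_C)$ for every $X\in\tGamma(E)$.

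The only step that is not a direct computation is the translation of this last tensorial condition into the statement that $C^{\infty}_E(M)$ is closed under the Poisson bracket of $M$, and I expect this to be the main technical point. The bridge is the Leibniz\ndash type identity \eqref{Xfg} relating $(\cL_X\pi)(df,dg)$ to $X\{f,g\}_M$ and the terms $\{X(f),g\}_M,\{f,X(g)\}_M$: pairing $\cL_X\pi$ against differentials of functions $f,g\in C^{\infty}_E(M)$, whose restrictions to $C$ annihilate $E$, converts the membership $(\cL_X\pi)|_C\in\Gamma(E\wedge TM|_C)$ into the requirement that $\{f,g\}_M$ again annihilate $E$ along $C$, i.e. that $C^{\infty}_E(M)$ be a Poisson subalgebra. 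Here one uses that the differentials $df|_C$ with $f\in C^{\infty}_E(M)$ span $E^{\circ}$ at each point of $C$, and one must keep track of how the cross terms interact with the first condition $\sh N^*C\subset E$. Assembling the two conditions then yields exactly the stated equivalence.
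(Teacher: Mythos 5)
Your proposal is correct and follows essentially the same route as the paper: unwinding coisotropicity on the generators $\cI_0=Z(C)$, $\cI_1=\tGamma(E)$ to get $E\subset TC$ and involutivity, identifying $\cN(\cI)/\cI$ degreewise with $C(T^*[1]\un{C})$ (deferring generation in degrees $0,1$ to Prop.~\ref{pres2}, exactly as the paper's footnote does), and characterizing $\cS\in\cN(\cI)$ via $\{\cS,\cI_0\}\subset\cI_1$ and $\{\cS,\cI_1\}\subset\cI_2$, with the identity \eqref{Xfg} converting the Lie-derivative condition into closure of $C^{\infty}_E(M)$ under $\{\cdot,\cdot\}_M$. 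The only point worth noting is that self-commutativity of $\un{\cS}$ is automatic here precisely because $\cI\subset\cN(\cI)$ in the coisotropic case, which you use implicitly and the paper states explicitly.
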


The Poisson-reduction result obtained from the above lemma is quite trivial, and is   a special case of  the Marsden--Ratiu theorem \cite{MR}.
In order to obtain less trivial and more interesting results we have to allow $\cC$ to be not just a coisotropic submanifold, but actually a presymplectic
 submanifold of $T^*[1]M$.

\section{Presymplectic submanifolds}\label{sec:pres}

In this Section we first define and characterize presymplectic submanifolds. Then we carry out the construction outlined in Section \ref{mainidea}: Step (a) in Prop. \ref{pres2}, Step (b1) in Lemma \ref{desc} and Step (b2) in Lemma \ref{halfnorm}. We summarize the results in Prop. \eqref{prescase}.


{Let $M$ be a smooth manifold and $\cC$ a submanifold of $T^*[1]M$.
Consider
$$Char(\cC):=\{X_{F}|_{\cC}: F\in \cN(\cI)\cap \cI\},$$
where $X_F$ denotes the hamiltonian vector field of $F$, and  $\cN(\cI)$ denotes the Poisson normalizer of the vanishing ideal $\cI$ of $\cC$. }
\begin{defi}\label{def:pres}
{A submanifold $\cC$ of $T^*[1]M$ is \emph{presymplectic} if{f}  $Char(\cC)$ is a (constant rank) distribution. In that case, 
$Char(\cC)$ is called
\emph{characteristic distribution}.}
\end{defi}
{Notice that if we apply the above definition to a submanifold $Y$ of an ordinary symplectic manifold $(X,\omega)$ we obtain the condition that 
the kernel of the pullback of $\omega$ to $Y$  has constant rank, recovering the usual notion of presymplectic submanifold in    symplectic geometry.} {For the notion of distribution see, e.g.,   \cite[Ch. 4]{var}}.

In this note we use an {equivalent characterization of presymplectic submanifolds,} which is more suitable for computations. {It relies on the notion of constant rank for matrices with entries in $C^{\infty}(\cC)$.}



\begin{defi}\label{presdef} Let $M$ be a smooth manifold. A submanifold $\cC$ of $T^*[1]M$ is \emph{presymplectic} if{f} 
locally there exist   homogeneous generators   $\phi_I$ of
$\cI$  for which the matrix $\;\;\{\phi_I,\phi_J\} \text{ mod }\cI\;\;$ has constant rank.
\end{defi}

\begin{ep}\label{exconst}
  a) Take $M=\RR^3$ with standard coordinates $x_i$, and denote the corresponding fiber coordinates on $T^*[1]\RR^3$ by $\theta_i$. Consider the submanifold of $T^*[1]\RR^3$ whose vanishing ideal $\cI\subset C^{\infty}(\cM)$ is generated by
$\phi_1=\theta_1$ and $\phi_2=\theta_2-x_1\theta_3$. The matrix of Poisson brackets is 
$\left(\begin{smallmatrix}  0& -\theta_3 \\
\theta_3 &0\\ \end{smallmatrix}\right).$ 
So the submanifold determined by $\cI$ is not presymplectic.

b) Now  consider the larger ideal obtained adding the generator $\phi_0=x_2$. We have $$\{\phi_I,\phi_J\}=
\begin{pmatrix}  0& 0 & -1\\
0& 0 & -\theta_3 \\
1& \theta_3 &0 \end{pmatrix}. 
$$
Hence the submanifold of $T^*[1]M$ determined by $\phi_0,\phi_1,\phi_2$ is presymplectic. 
\end{ep}

In the previous Section we saw that (graded) submanifolds of $T^*[1]M$ are of the form $E^{\circ}[1]$ for some  
vector subbundle $E\rightarrow C$ of $TM\rightarrow M$. We now characterize the presymplectic condition in terms of $E$ and $C$.  

\begin{prop}\label{pres1}
  $\cC=E^{\circ}[1]$ is a graded presymplectic submanifold if{f} $F:=TC\cap E$ is a constant rank, involutive distribution on $C$.
\end{prop}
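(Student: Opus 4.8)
The plan is to work with the computational characterization of Def.~\ref{presdef}: I must produce homogeneous generators of $\cI$ for which $\{\phi_I,\phi_J\}\bmod\cI$ has constant rank in the sense of Def.~\ref{constrk}, and conversely recover $F=TC\cap E$ from such a matrix. The first move is to choose generators adapted to the flag $F\subset E$, $F\subset TC$ inside $TM|_C$ (writing $e=\operatorname{rk}E$ and, where it is constant, $r=\operatorname{rk}F$). In degree $1$ I would take vector fields $X_1,\dots,X_r$ whose restrictions to $C$ frame $F$ together with $Y_1,\dots,Y_{e-r}$ whose restrictions frame a complement of $F$ in $E$; with the degree-$0$ generators these generate $\cI_1=\tilde{\Gamma}(E)$. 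In degree $0$ I would split a local frame of $N^*C$ into covectors $df''_l$ lying in $N^*C\cap E^{\circ}$ and covectors $df'_k$ completing them. The purpose of this choice is that it diagonalizes the conormal--to--$E$ pairing.

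Recalling that $\{f,X\}=-X(f)$ and $\{X,Y\}=[X,Y]$, I would then read off the blocks of $\{\phi_I,\phi_J\}\bmod\cI$. The degree-$0$ entries are the pairings $\langle df,\cdot\rangle$ restricted to $C$: the $f$--$X$ block vanishes since $X_i|_C\in F\subset TC$, the $f''$--$Y$ block vanishes since $df''_l\in E^{\circ}$, and the $f'$--$Y$ block is a square matrix $P'$ which is \emph{invertible}, because the induced pairing $N^*C/(N^*C\cap E^{\circ})\times E/F\to\RR$ is perfect. All remaining entries are of degree $1$; the essential one is the antisymmetric block $B:=\bigl(\{X_i,X_{i'}\}\bmod\cI\bigr)=\bigl([X_i,X_{i'}]|_C\bmod E\bigr)$. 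Since the $X_i$ are tangent to $C$, so is $[X_i,X_{i'}]$, whence $B$ vanishes precisely when $[X_i,X_{i'}]|_C\in E\cap TC=F$, i.e.\ precisely when $F$ is involutive.

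For the implication ($\Leftarrow$): if $F$ has constant rank then all the subbundles above are smooth, the generators can be chosen smoothly, and involutivity yields $B=0$. Using the invertible degree-$0$ block $P'$ I would clear, by column operations over $C(\cC)_U$, the degree-$1$ entries in the $X$- and $Y$-rows; once $B=0$ the $X_i$-columns and the $f''$-columns (the latter being identically zero because $df''_l\in E^{\circ}$) drop out, and the matrix reduces to $\left(\begin{smallmatrix}\star&0\end{smallmatrix}\right)$ whose $2(e-r)$ surviving columns are governed by $P'$, hence independent at every point. Thus $\cC$ is presymplectic. For ($\Rightarrow$): by Remark~\ref{degzeropart} constant rank of the full matrix forces its degree-$0$ part to have constant rank; that part is the conormal pairing, of rank $2(\operatorname{rk}E-\operatorname{rk}F)$, so $\operatorname{rk}F$ is constant and $F$ is a subbundle. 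It remains to deduce involutivity, i.e.\ $B=0$.

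The main obstacle is this last step, where the subtlety of the graded constant-rank notion enters. After the column reduction the matrix is column-equivalent to a block form whose only surviving degree-$1$ data is $B$ in the $X$-rows. I would argue that constant rank forces the column module to be generated by the $2(e-r)$ body-independent columns alone, and then compare degree-$1$ parts to conclude that each column of $B$ must vanish. This is the graded analogue of Ex.~\ref{exconst}(a): a nonzero purely degree-$1$ antisymmetric block cannot be absorbed by column operations and destroys constant rank. Making this module-theoretic degree-counting argument rigorous over the graded ring $C(\cC)_U$ is the crux; once $B=0$ one has $[X_i,X_{i'}]|_C\in F$, hence the involutivity of $F$, completing the equivalence.
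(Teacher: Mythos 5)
Your proposal is correct and follows essentially the same route as the paper's proof: both use the constant-rank characterization of Def.~\ref{presdef}, choose generators adapted to the flag $F\subset E$, $F\subset TC$ (your invertible block $P'$ is the paper's $\alpha$/$\beta$, your antisymmetric degree-$1$ block $B$ is the paper's $\gamma$), and reduce the equivalence to the statement that constant rank holds iff the entries of $B$ lie in $\cI$, i.e.\ iff $F$ is involutive. The step you flag as the crux --- that an invertible degree-$0$ block cannot absorb a nonzero purely degree-$1$ block, so constant rank forces $B\equiv 0 \bmod \cI$ --- is exactly where the paper's own argument is equally terse ("since $\gamma$ consists of degree $1$ elements, it vanishes on $C\subset\cC$, so \dots constant rank iff the entries of $\gamma$ lie in $\cI$"), so your sketch is at the same level of rigor as the published proof.
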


\begin{proof}
Locally on $C$ pick generators $f_i$ and $X_{\alpha}$ of $\cI$, of degree $0$ and $1$ respectively. Denote them collectively by $\phi_I$.
The submanifold $\cC$ is presymplectic if{f} the matrix $\{\phi_I,\phi_J\} \text{ mod }\cI$   has constant rank.

Assume first that $\cC$ is presymplectic. 
The constant rank of $\{\phi_I,\phi_J\} \text{ mod }\cI$ implies that the degree $0$ matrix $\{f_i,X_{\alpha}\}$ has constant rank, so
$E/(TC\cap E)$ and therefore $TC\cap E$ have constant rank.
Now refine further the choice of constraints as follows: $f_i\in Z(C)$ so that the last elements annihilate $E$,
$X_{\alpha}\in \tGamma(E)$ so that the
first elements lie in $TC\cap E$. We write down the 4 by 4 block-matrix 
$$
\{\phi_I,\phi_J\} \text{ mod }\cI=
\left(\begin{array}{c c|c c}
 0 &0&0&{\sigma} \\
0 &0&0&0\\
\hline
0 &0&\gamma&*\\
{\tau} &0& \delta & * \\
\end{array}\right),
$$
for which ${\sigma},{\tau}$
are \emph{invertible}  matrices of degree zero. The columns  of the block $\delta$ can be expressed as $C^{\infty}_1(\cC)$-linear combinations of the columns   of ${\tau}$.
 Since $\gamma$ consists of degree $1$ elements, it
vanishes on $C\subset \cC$, so we conclude that  $\{\phi_I,\phi_J\} \text{ mod }\cI$ has constant rank   if{f} the entries of $\gamma$ lie in $\cI$. This is equivalent to
  $[X_{\alpha},X_{\beta}]\subset \cI_1=\tGamma(E)$
whenever  $X_{\alpha},X_{\beta} \in \tGamma(TC\cap E)$, which in turn is equivalent to $TC\cap E$ being involutive.

Now assume that  $TC\cap E$ is of constant rank and involutive. Choosing the constraints $\phi_I$ as above and reversing the above argument  we conclude that
$\{\phi_I,\phi_J\} \text{ mod }\cI$  has constant rank.
\end{proof}

\begin{ep}\label{contact}
The graded submanifold of $T^*[1]\RR^3$ considered in Ex. \ref{exconst} a) is not presymplectic. It is given by $E^{\circ}[1]$ where $E$ is the kernel of the standard contact form $x_1dx_2+dx_3$ on $\RR^3$, so in particular $E$ is not involutive. This is consistent with Prop. \ref{pres1}.
\end{ep}

\begin{remark}
{Prop. \ref{pres1} states in particular that  $\cC=E^{\circ}[1]$ being
presymplectic (a constant rank condition) implies the involutivity of  $TC\cap E$. 
This is consistent with the  combination of the following two facts: first, if $\cC$ is presymplectic, then $TC\cap E$ agrees with the restriction of the degree zero component of $Char(\cC)$ to $C$ (this follows immediately from Prop. \ref{pres1} together with the later Lemma \ref{ext} and eq. \eqref{INI1}). Second,
 $Char(\cC)$ is involutive, being the kernel of a constant rank  \emph{closed} 2-form. 
}

{Regarding the converse implication in Prop. \ref{pres1},
notice that given a vector subbundle $E\to C$  such that $TC\cap E$ has constant rank, $TC\cap E$ is not necessarily involutive, hence 
 $E^{\circ}[1]$ is not necessarily presymplectic. A counterexample is given in Ex. \ref{contact}.}
 \end{remark}

\begin{defi} 
Let $\cC$ be a presymplectic submanifold of $T^*[1]M$. 
The \emph{presymplectic quotient of $\cC$} is
{the quotient of $\cC$ by the characteristic distribution $Char(\cC)$.}
\end{defi}
\begin{remark}\label{rem:charN}
{The set   of  functions on $\cC$ which are invariant under the characteristic distribution satisfies $$C^{\infty}(\cC)^{Char(\cC)}=\cN(\cI)/(\cN(\cI) \cap\cI),$$ as we will show in eq. \eqref{justy} in the proof of Prop. \ref{pres2}. The latter has an induced graded Poisson algebra structure,
computed by lifting to functions in $\cN(\cI)$ and applying the bracket $\{\cdot,\cdot\}$ of $C^{\infty}(T^*[1]M)$.
Hence when the presymplectic quotient of $\cC$ is smooth, it will be a graded Poisson manifold (indeed symplectic, see Prop. \ref{pres2}).}
 \end{remark}

Before determining the {presymplectic} quotient of $\cC$ we need two technical lemmas.
\begin{lemma}\label{ext}
Let $M$ be a manifold, $C$ a submanifold, and $E\subset TM|_C$  a subbundle so that $F:=TC \cap E$ is an involutive constant rank distribution on $C$ so that $\un{C}:=C/F$ is smooth. Then every vector field on $C$ which is projectable w.r.t. $C\rightarrow \un{C}$ can be extended to a vector field on $M$ lying in 
$\{X\in \tilde{\Gamma}(TC):[X,\tilde{\Gamma}(E)]\subset \tilde{\Gamma}(E)\}.$
\end{lemma}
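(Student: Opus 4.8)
The plan is to construct $X$ locally in coordinates adapted to the data and then globalize by a partition of unity; the only genuinely delicate point is that the target condition $[X,\tGamma(E)]\subset\tGamma(E)$ is not $C^\infty(M)$\ndash linear in $X$, so I must check that it survives the patching. First I would record the structural decomposition behind the hypotheses: since $F=TC\cap E$ is involutive of constant rank with $\un{C}=C/F$ smooth, the foliation by $F$ is simple near any point of $C$, and along $C$ the bundle $E$ splits as $F\oplus E'$ with the complement satisfying $E'\cap TC=0$, i.e. $E'$ is transverse to $C$.

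Next, fixing a point of $C$, I would choose coordinates $(s,t,u,v)$ on a neighbourhood in $M$ adapted to everything at once: $C=\{u=v=0\}$; along $C$ one has $TC=\langle\partial_s,\partial_t\rangle$, with $F=\langle\partial_t\rangle$ (Frobenius applied to $F$) and the $s$\ndash coordinates pulled back from $\un{C}$, so that $p\colon C\to\un{C}$ reads $(s,t)\mapsto s$; and $E=\langle\partial_t,\partial_u\rangle$ along $C$, the $u$\ndash directions realizing the transverse part $E'$ (using a tubular neighbourhood adapted to $E'$). In these coordinates the projectability of a vector field $\bar X$ on $C$ means exactly that $\bar X=\sum_a A^a(s)\,\partial_{s^a}+\sum_\mu B^\mu(s,t)\,\partial_{t^\mu}$, the $\partial_s$\ndash coefficients being independent of $t$. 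I would then define the local extension $X$ by the very same formula, now reading $A^a,B^\mu$ as functions on $M$ that are constant in the transverse directions $u,v$; by construction $X|_C=\bar X$ and $X$ is tangent to $C$, hence $X\in\tGamma(TC)$. The verification that $[X,Y]|_C\in E$ for every $Y\in\tGamma(E)$ is then a short computation: writing $Y$ with $\partial_s$\ndash and $\partial_v$\ndash coefficients vanishing on $C$, the $\partial_t$\ndash and $\partial_u$\ndash components of $[X,Y]$ are unconstrained since they already lie in $E$; the $\partial_v$\ndash component vanishes on $C$ because $X$ is tangent to $C$ and the $\partial_v$\ndash coefficient of $Y$ vanishes on $C$; and the $\partial_s$\ndash component vanishes on $C$ precisely because each $A^a$ is independent of $t$. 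This last point is the one and only place where projectability is used.

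Finally I would globalize: cover a neighbourhood of $C$ by such charts, together with $M\setminus C$ carrying the zero field, and set $X=\sum_\alpha\rho_\alpha X_\alpha$ for a subordinate partition of unity. Since every $X_\alpha$ agrees with $\bar X$ on $C$, one still has $X|_C=\bar X$, so $X\in\tGamma(TC)$. The potential obstacle is the extra term $-\sum_\alpha Y(\rho_\alpha)X_\alpha$ produced by the non\ndash tensoriality of the bracket; but $\sum_\alpha\rho_\alpha\equiv 1$ forces $\sum_\alpha Y(\rho_\alpha)=0$, and on $C$ all $X_\alpha$ coincide with $\bar X$, so this term restricts to $0$ on $C$, while $\sum_\alpha\rho_\alpha[X_\alpha,Y]$ restricts into $E$ because each $[X_\alpha,Y]|_C\in E$ and $\tGamma(E)$ is a $C^\infty(M)$\ndash module. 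Hence $[X,Y]\in\tGamma(E)$, which finishes the argument. I expect this globalization --- reconciling a condition that is not $C^\infty(M)$\ndash linear with a partition\ndash of\ndash unity construction --- to be the main point to get right, the local model being essentially routine.
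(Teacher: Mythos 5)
Your proof is correct, but it globalizes differently from the paper's. The paper argues coordinate\ndash free and globally: it fixes a complement $B$ of $F=TC\cap E$ in $E$, extends it to a normal bundle $\nu C$ of $C$ (so $TM|_C=TC\oplus\nu C$ with $B\subset \nu C$), chooses an Ehresmann connection on $\nu C\to C$ that restricts to a connection on the subbundle $B\to C$, and takes as extension the horizontal lift $X^H$ of the projectable field; the condition $[X^H,\tGamma(E)]\subset\tGamma(E)$ is then checked on generators $Y_F+Y_B$, with $Y_F\in\tGamma(F)$ handled by projectability and $Y_B$ (built from sections of $B$) handled by the fact that the flow of $X^H$ preserves the fibers of $B\to C$. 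Your adapted\ndash chart extension is precisely the horizontal lift of the flat connection in that chart (your $E'$ playing the role of $B$), so the local geometric content of the two arguments coincides; the genuine difference is the patching. The paper's single global connection produces the extension on the whole tubular neighborhood at once, so the non\ndash $C^\infty(M)$\ndash linearity of the normalizer condition never has to be confronted, whereas your partition\ndash of\ndash unity step must cancel the terms $-\sum_\alpha Y(\rho_\alpha)X_\alpha$, which you do correctly using $\sum_\alpha\rho_\alpha\equiv 1$, the fact that every local extension restricts on $C$ to the given field, and the fact that the bump function of the chart $M\setminus C$ vanishes identically near $C$. What each approach buys: the paper's is shorter and avoids the patching issue entirely; yours is more elementary (no Ehresmann connections), fully explicit in coordinates, and \emph{explicitly} produces a vector field on all of $M$, a point the paper glosses over when it identifies the total space of $\nu C$ with a tubular neighborhood of $C$ and writes $\nu C\cong M$.
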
  
\begin{proof}
 Fix a subbundle $B$ which is a  complement to $TC\cap E$ in $E$. Extend it to a complement $\nu C$ of $TC$ in $TM|_C$, i.e., to a choice of normal bundle for $C$. 
Choose an Ehresmann  connection for the vector bundle $\nu C \rightarrow C$ which restricts to a connection on the subbundle $B \rightarrow C$. We claim that if $X$ is a projectable vector field on $C$, then its horizontal lift $X^H$, a vector field on $\nu C \cong M$, is an extension with the  required property. Here we fix an identification of the total space of $\nu C$ with (a tubular neighborhood of $C$ in) $M$.

Let $Y_F \in \tilde{\Gamma}(F)$, i.e., $Y_F$ is a vector field on $M\cong \nu C$ whose restriction to $C$ lies in $F$. Then $[X^H,Y_F]|_C=[X,(Y_F)|_C]\subset F$, since by assumption $X$ is a projectable vector field. Further take a section of the vector bundle $B \rightarrow C$, extend it by translation to a vertical vector field on $B$, and then to a vector field $Y_B$ on $\nu C\cong M$. Since the flow of $X^H$ preserves the fibers of 
$B \rightarrow C$ it follows that $[X^H, Y_B]|_B$ is a vertical vector field on $B$.
Altogether this shows $[X^H, Y_F+Y_B]\in \tilde{\Gamma}(E)$. Since any element of $\tilde{\Gamma}(E)$
can be written as   $Y_F+Y_B$, up to a vector field vanishing on $C$, we are done. 
\end{proof}
 
\begin{lemma}\label{Redu}
Let $\cN$ be an N-manifold, and denote by $N$ its body. Let $\cD$ be an involutive distribution 
 on $\cN$. Assume that  $\un{N}$, the quotient of  $N$ by the (degree zero part of) $\cD$, is a smooth manifold such that the projection  $pr \colon N \rightarrow \un{N}$ is a submersion. Suppose that the following technical conditions are satisfied for every open subset $U \subset N$:
\begin{itemize}
 \item[i)]  For every  $\cD$-invariant $f\in C^{\infty}(\cN)_U$ of degree $\le deg(\cN)$ there exists a $\cD$-invariant ${F}\in C^{\infty}(\cN)_{pr^{-1}(pr(U))}$ with ${F}|_U=f$
\item[ii)] If two $\cD$-invariant functions ${F},{G}\in C^{\infty}(\cN)_{pr^{-1}(pr(U))}$ agree on $U$, then they are equal.
\end{itemize}
Then  $$C^{\infty}(\un{\cN})_V:=C^{\infty}(\cN)_{pr^{-1}(V)}^{\cD-\text{invariant}}\;\;\;\;\;\ \text{for all open subsets }V\subset \un{N}$$
defines a sheaf (over $\un{N}$) of graded commutative algebras \emph{generated by their elements in degrees $0,\dots,\text{deg}(\cN)$}.
\end{lemma}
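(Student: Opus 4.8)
The plan is to treat the two assertions of the statement separately, writing $n:=\deg(\cN)$. First I would dispose of the claim that $V\mapsto C(\un\cN)_V$ is a sheaf of graded commutative algebras, which I expect to be routine and to require neither (i) nor (ii). The $\cD$-invariant functions form a \emph{subalgebra} because $X(fg)=(Xf)g\pm f(Xg)$, and this subalgebra is \emph{graded}: as $\cD$ is a graded distribution it is spanned by homogeneous vector fields, and a homogeneous $X$ of degree $d$ sends $C(\cN)_k$ to $C(\cN)_{k+d}$, so $Xf=0$ forces $Xf_k=0$ on each homogeneous component $f_k$. The sheaf axioms then follow from the fact that $C(\cN)$ is itself a sheaf: for an open cover $\{V_\alpha\}$ of $V$ the preimages $pr^{-1}(V_\alpha)$ cover $pr^{-1}(V)$ and satisfy $pr^{-1}(V_\alpha)\cap pr^{-1}(V_\beta)=pr^{-1}(V_\alpha\cap V_\beta)$, sections glue in $C(\cN)$, and $\cD$-invariance of the glued section is a local condition, hence automatic.

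The real content is generation in degrees $0,\dots,n$, and here my strategy is to reduce to a local model and then globalize. Since being generated in degrees $\le n$ is a local property on $\un N$, it suffices to produce, around each point of $N$, a small open $U$ on which every homogeneous $\cD$-invariant function is a polynomial in $\cD$-invariant functions of degree $\le n$. To get this I would invoke the Frobenius normal form for involutive distributions on N-manifolds from \cite{Frob}: on a suitable $U$ one can choose graded coordinates in which $\cD$ is spanned by a subset of the coordinate vector fields, the complementary coordinates being degree-$0$ coordinates $x$ pulled back from $\un N$ (using that $pr$ is a submersion) together with graded coordinates $\xi$ of degrees $1,\dots,n$. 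A $\cD$-invariant function on $U$ is then precisely one independent of the leaf coordinates, i.e.\ a function of $x$ and $\xi$ only, and such a function is visibly a polynomial in the degree-$\le n$ invariants $x,\xi$. I expect this local normal form to be the main obstacle; granting it, the rest is bookkeeping.

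Finally I would globalize using the two technical hypotheses. Given a homogeneous $G\in C(\un\cN)_V$, which I may assume of degree $m>n$, cover $pr^{-1}(V)$ by small opens $U$ as above so that $G|_U=P(h_1,\dots,h_s)$ for a polynomial $P$ in $\cD$-invariant functions $h_i\in C(\cN)_U$ of degree $\le n$ (the degree-$0$ ones serving as coefficients). By condition (i) each $h_i$ extends to a $\cD$-invariant $H_i\in C(\cN)_{pr^{-1}(pr(U))}$ with $H_i|_U=h_i$, so $G-P(H_1,\dots,H_s)$ is $\cD$-invariant on $pr^{-1}(pr(U))$ and vanishes on $U$; condition (ii) then forces it to vanish on all of $pr^{-1}(pr(U))$. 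Thus $G=P(H_1,\dots,H_s)$ on the saturated open $pr^{-1}(pr(U))$, a polynomial in degree-$\le n$ invariants. As $U$ varies the images $pr(U)$ form an open cover of $V$ (here $pr$ is open), so $G$ is, locally on $\un N$, in the subalgebra generated by degrees $0,\dots,n$; since $C(\un\cN)$ is a sheaf, this yields generation over all of $V$ and completes the argument.
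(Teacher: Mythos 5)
Your proof follows essentially the same route as the paper's: the graded Frobenius theorem of \cite{Frob} provides the local normal form, and conditions (i) and (ii) are used exactly as in the paper's proof to extend the local degree~$\le\deg(\cN)$ generators to $\cD$-invariant functions on the saturated open sets $pr^{-1}(pr(U))$ and to conclude that the given section equals the corresponding polynomial there. The sheaf part of your argument (locality of invariance plus the sheaf axioms for $C(\cN)$) also agrees with the paper. Up to the last sentence, the argument is correct.

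The last sentence, however, is not justified as written: the fact that $C(\un{\cN})$ is a sheaf does \emph{not} upgrade ``$G$ is locally on $\un{N}$ a polynomial in degree~$\le\deg(\cN)$ invariants'' to ``$C(\un{\cN})_V$ is generated by its elements of degree $\le\deg(\cN)$'' for an arbitrary open $V$. The gluing axiom assembles compatible \emph{sections}, not polynomial \emph{representations}; a section which on each member of a cover is a polynomial in locally defined invariants need not a priori be a polynomial in invariants defined over all of $V$ (this implication genuinely fails for, say, sheaves of holomorphic functions). What rescues the step in the smooth category is a partition of unity on $\un{N}$ --- which is precisely what the paper invokes at this point. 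Concretely: choose $\{\rho_\alpha\}$ subordinate to the cover $\{pr(U_\alpha)\}$ of $V$ and bump functions $\tau_\alpha$ supported in $pr(U_\alpha)$ with $\tau_\alpha\equiv 1$ on $\mathrm{supp}\,\rho_\alpha$; pulling both back by $pr$ gives degree-$0$ sections of $C(\un{\cN})$, and each monomial $c\,h_{i_1}\cdots h_{i_k}$ of the local representation of $G$ over $pr^{-1}(pr(U_\alpha))$ satisfies $(pr^*\rho_\alpha)\,c\,h_{i_1}\cdots h_{i_k}=\bigl((pr^*\rho_\alpha)c\bigr)\,\bigl((pr^*\tau_\alpha)h_{i_1}\bigr)\cdots\bigl((pr^*\tau_\alpha)h_{i_k}\bigr)$, whose factors extend by zero to sections of degree $\le\deg(\cN)$ over all of $V$; summing over $\alpha$ recovers $G$ as a polynomial in such sections. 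With this replacement of your final inference, the proof is complete and coincides with the paper's.
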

\begin{proof}
By the 
Frobenius theorem ({see, e.g.,   \cite[Ch. 4]{var}})
there exist graded local coordinates on $\cN$ adapted to the distribution $\cD$. Clearly the invariant functions on $\cN$ are  those that depend only on the coordinates transverse to $\cD$,
 so it is clear that  on \emph{small} open sets of $N$ the algebra of invariant functions is generated by (invariant)  elements in degrees $0,\dots,\text{deg}(\cN)$.

We show that this is true for open sets of the form $pr^{-1}(V)$ where $V$ is a  small open subset of $\un{N}$.
Indeed let ${F}$ be an invariant function in  $C^{\infty}(\cN)_{pr^{-1}(V)}$. Take a small enough open set $U \subset N$ with $pr(U)=V$. By the first paragraph 
  $F|_{U}\in C^{\infty}(\cN)_{U}$  is a sum of products of  invariant element of $C^{\infty}(\cN)_{U}$ of degrees $0,\dots,\text{deg}(\cN)$.
By assumption $i)$ we can extend them to 
invariant elements of $C^{\infty}(\cN)_{pr^{-1}(V)}$,  which combine into an invariant function $\hat{F}\in C^{\infty}(\cN)_{pr^{-1}(V)}$  extending $F|_{U}$, which in turn by assumption $ii)$ must be equal to $F$. We conclude that  the invariant functions in $C^{\infty}(\cN)_{pr^{-1}(V)}$ form a graded commutative algebra generated by its elements in degrees $0,\dots,\text{deg}(\cN)$.
For arbitrary open sets of $\un{N}$ the same holds using a partition of unity argument on $\un{N}$.

The fact that $C^{\infty}(\un{\cN})$ is a sheaf follows immediately from  $C^{\infty}({\cN})$ begin a sheaf and 
$\cD$-invariance being a local property.
\end{proof}

\begin{prop}\label{pres2} 
 The presymplectic quotient of $\cC=E^{\circ}[1]$
 is smooth
if{f} the quotient $\un{C}:=C/(TC\cap E)$ is smooth, and in this case
it is canonically isomorphic to $T^*[1]\un{C}$ as a graded symplectic manifold.
\end{prop}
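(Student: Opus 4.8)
The plan is to establish the biconditional and the identification separately. For the ``if{f}'' statement, I would note that by Prop.~\ref{pres1} the presymplectic hypothesis already guarantees that $F:=TC\cap E$ is a constant rank involutive distribution on $C$, so the quotient $\un{C}=C/F$ makes sense as a candidate. The claim is that smoothness of the presymplectic quotient (i.e.\ that $\cN(\cI)/(\cN(\cI)\cap\cI)$ is the function algebra of a graded manifold) is \emph{equivalent} to smoothness of $\un{C}$. The forward direction is essentially definitional: the degree $0$ part of $\cN(\cI)/(\cN(\cI)\cap\cI)$ is $C^{\infty}_E(M)$ modulo functions vanishing on $C$, which are precisely the $F$-invariant functions on $C$; these form the function algebra of a smooth manifold only if $\un{C}$ is smooth. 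For the converse one assumes $\un{C}$ smooth and must produce the full graded structure, which is where the real work lies.

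Next I would identify the quotient with $T^*[1]\un{C}$ by computing $\cN(\cI)/(\cN(\cI)\cap\cI)$ degree by degree, exactly mirroring the coisotropic computation in Section~\ref{sec:coiso}. In degree $0$, by \eqref{NIzero} we have $\cN(\cI)_0=C^{\infty}_E(M)$, and modulo $\cI_0=Z(C)$ this is the algebra of $F$-invariant functions on $C$, i.e.\ $C^{\infty}(\un{C})$. In degree $1$, by \eqref{NIone} we have $\cN(\cI)_1=\{X\in\tilde{\Gamma}(TC):[X,\tilde{\Gamma}(E)]\subset\tilde{\Gamma}(E)\}$, and modulo $\cN(\cI)\cap\cI_1$ one should obtain exactly the projectable vector fields on $C$ modulo those tangent to the fibers of $C\to\un{C}$, namely $\Gamma(T\un{C})=C_1(T^*[1]\un{C})$. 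The nontrivial input here is Lemma~\ref{ext}, which guarantees that every vector field on $\un{C}$ actually arises from an element of $\cN(\cI)_1$ (surjectivity of the map to $\Gamma(T\un{C})$); the kernel computation is straightforward since $X\in\cN(\cI)_1\cap\cI_1$ forces $X|_C\in F$ together with the normalizer condition. Matching the Poisson brackets then gives the symplectomorphism, since on both sides the bracket is the Schouten bracket descended to the quotient.

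The main obstacle I expect is proving that $\cN(\cI)/(\cN(\cI)\cap\cI)$ is genuinely the function sheaf of a graded manifold, i.e.\ that it is \emph{generated in degrees $0$ and $1$} and is locally free of the correct rank, rather than merely having the right degree $0$ and degree $1$ pieces. This is precisely the point flagged in the footnote of Section~\ref{sec:coiso}, and it is why Lemma~\ref{Redu} was proved in advance. My plan is therefore to realize the presymplectic quotient as the reduction of $\cC$ by an involutive graded distribution $\cD$ (its characteristic distribution, cf.\ the remark that the normalizer quotient agrees with the quotient by the characteristic distribution) and to verify the two technical hypotheses i) and ii) of Lemma~\ref{Redu} for $\cN=\cC$, which has degree $1$. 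Hypothesis ii) is a separation/uniqueness statement that follows from $pr$ being a surjective submersion, while hypothesis i) is an extension statement: a $\cD$-invariant function of degree $\le 1$ defined on a small open set extends to an invariant function on a saturated open set. In degree $0$ this is extension of invariant functions along a foliation, and in degree $1$ this is exactly the content of Lemma~\ref{ext}. Once Lemma~\ref{Redu} applies, it delivers that the quotient algebra is generated in degrees $0$ and $1$, so the degreewise identification above assembles into the asserted isomorphism $T^*[1]\un{C}$, completing the proof.
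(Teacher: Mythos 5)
Your overall route is the paper's: compute the degree $0$ and degree $1$ pieces of $\cN(\cI)/(\cN(\cI)\cap\cI)$, use Lemma \ref{ext} for surjectivity in degree $1$, and obtain generation in degrees $0$ and $1$ from Lemma \ref{Redu} applied to the characteristic distribution of $\cC$. But there is one step you have not supplied, and it is precisely the one the paper flags as delicate: the identification of the presymplectic quotient (defined as $\cN(\cI)/(\cN(\cI)\cap\cI)$) with the algebra $C(\cC)^{Char(\cC)}$ of functions on $\cC$ invariant under the characteristic distribution. You invoke for this the remark following the definition of the presymplectic quotient, but that remark is a forward reference to eq.~\eqref{justy}, which is established only \emph{inside} the paper's proof of Prop.~\ref{pres2}; citing it here is circular. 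The inclusion $\cN(\cI)/(\cN(\cI)\cap\cI)\subset C(\cC)^{Char(\cC)}$ is immediate, but the reverse inclusion is not formal: one must show that every $F\in C(\cM)$ satisfying the weaker condition $\{F,\cN(\cI)\cap\cI\}\subset\cI$ can be corrected by an element of $\cI$ so as to land in the full normalizer $\cN(\cI)$ --- this is the claim \eqref{eq:INI}, and the paper's footnote stresses that its proof uses the fact that $\cI$ cuts out a presymplectic submanifold, not merely that it is a homogeneous ideal.

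Without that bridge your two halves do not connect: Lemma \ref{Redu} yields generation in degrees $0$ and $1$ for the invariant functions $C(\cC)^{Char(\cC)}$, while your degreewise identification concerns the normalizer quotient, and a priori the latter is only a subalgebra of the former (a subalgebra of an algebra generated in degrees $0$ and $1$ need not itself be generated in those degrees). The repair is exactly the paper's order of argument: first prove generation of $C(\cC)^{Char(\cC)}$ via Lemma \ref{Redu}; then show that the invariant elements of degrees $0$ and $1$ all lift to $\cN(\cI)$ (in degree $0$ by extending an $(E\cap TC)$-invariant function on $C$ to an element of $C^{\infty}_E(M)$, in degree $1$ by Lemma \ref{ext}), so that the subalgebra $(\cN(\cI)+\cI)/\cI$ contains a generating set and hence equals $C(\cC)^{Char(\cC)}$; only then does generation transfer to the presymplectic quotient, after which your degreewise computation finishes the proof. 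Your part on the degreewise identification contains essentially these extension facts, so the gap is fillable with material you already have, but as written the proposal assumes what it needs to prove.
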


\begin{proof}
To start with, let us look at the degree $0$ and $1$ functions on $\cM$ which restrict to $Char(\cC)$-invariant functions on $\cC$.
To do so we make  use of
\begin{eqnarray}\label{INI}
(\cN(\cI)\cap \cI)_0&=&
\{f\in Z(C):df|_C\subset E^{\circ}\},\\
\label{INI1}
(\cN(\cI)\cap \cI)_1&=&\{X\in \tilde{\Gamma}(TC\cap
E):[X,\tilde{\Gamma}(E)]\subset \tilde{\Gamma}(E)\}.
\end{eqnarray}
Let $f\in C^{\infty}_0(\cM)$.
 The condition $\{f, \cN(\cI)\cap \cI\}\subset  \cI$ means that $f|_C$ is a function on $C$ that is constant along $TC\cap E$. Let $X\in C^{\infty}_1(\cM)$.
The condition $\{X, (\cN(\cI)\cap \cI)_0\}\subset  \cI$ means that $X  \in \tilde{\Gamma}(E+TC)$, so we may assume that $X\in \tilde{\Gamma}(TC)$ by adding an element of $\tilde{\Gamma}(E)=\cI_1$. The condition 
$\{X, (\cN(\cI)\cap \cI)_1\}\subset  \cI$ then says that $X|_C$ is a basic vector field with respect to the projection $pr \colon C \rightarrow \un{C}$.

We claim that $C^{\infty}(\cC)^{Char(\cC)}$ is generated by elements in degrees $0$ and $1$.
The characteristic distribution is involutive since $\cN(\cI)\cap \cI$ is closed under the Poisson bracket. Further the invariant functions in degrees $0$ and $1$, by the above description, are given
by functions and vector fields on $U$ which are projectable with respect to $pr|_U$.
They satisfy the assumptions of Lemma \ref{Redu}:
assumption $i)$ by Lemma \ref{ext}, and assumption $ii)$ because vector fields on $C$ which projects to the same vector field on $\un{C}$ must differ by sections of $E\cap TC$.
  Hence the claim follows immediately from Lemma \ref{Redu}.

Next we claim that\footnote{To prove this claim it is important that $\cI$ is not just any homogeneous ideal on $C^{\infty}(\cM)$, but one that defines a (presymplectic) submanifold.}
\begin{equation}\label{eq:INI}
\{F\in C^{\infty}(\cM): \{F,\cN(\cI)\cap \cI\}\subset  \cI\}=\cN(\cI)+\cI.
\end{equation}
 We only show the inclusion ``$\subset$'', because the other one is clear. Elements of the L.H.S. are exactly the functions on $\cM$ whose restriction to $\cC$ lies in $C^{\infty}(\cC)^{Char(\cC)}$, and we just saw that the latter is generated by elements in degrees $0$ and $1$. Hence it is sufficient to show the inclusion ``$\subset$'' only for elements of the L.H.S. of degrees $0$ and $1$. We saw above that such elements in degree $0$ consist of $f\in C^{\infty}_0(\cM)$ such that $f|_C$ is constant along $TC\cap E$, hence they can be extended to a function in $C_E^{\infty}(M)=\cN(\cI)_0$. We also saw that in degree $1$, up to elements of $\cI_1$, they consist of  $X\in C^{\infty}_1(\cM)$ such that $X|_C$ is a basic vector field with respect to the projection $C \rightarrow C/(TC\cap E)$, so by Lemma \ref{ext} we can extend them to an element of $\cN(\cI)_1$.
 This proves the claim. 

Quotienting the identity \eqref{eq:INI} by $\cI$ we obtain
\begin{equation}\label{justy}
C^{\infty}(\cC)^{Char(\cC)}=(\cN(\cI)+\cI)/\cI=\cN(\cI)/\cN(\cI)\cap \cI.
\end{equation}

Now we show that $\cN(\cI)/(\cN(\cI)\cap \cI) \cong C^{\infty}(T^*[1]\un{C})$. Since both
sides  are generated by elements in degrees $0$ and $1$, it suffices to show that the elements in degrees $0$ and $1$ agree.
Using \eqref{INI} we see that $\cN(\cI)_0/\cN(\cI)_0\cap \cI_0$
consists of  the $E\cap TC$-invariant functions on $C$, which agree with the space of  functions on a smooth manifold if{f} $\un{C}$ is a smooth manifold. In this case $\cN(\cI)_0/\cN(\cI)_0\cap \cI_0$ is canonically identified with 
$C^{\infty}(\un{C})$. In degree $1$ we have a map
$$\cN(\cI)_1 \rightarrow
\frac{ \{Y\in  {\Gamma}(TC):[Y,{\Gamma}(E\cap TC)]\subset
{\Gamma}(E\cap TC)\} }{ {\Gamma}(E\cap TC)}\cong\{\text{vector fields on }\un{C}\}$$ obtained by
restricting to $C$. The kernel is $\cN(\cI)_1\cap \cI_1 $,
and using Lemma \ref{ext}  we see that this map is also surjective. Hence
$\cN(\cI)_1/\cN(\cI)_1\cap \cI_1$ is canonically isomorphic to the space of  vector fields
on $\un{C}$. We conclude that the projection $pr \colon C \rightarrow \un{C}$ induces an isomorphism 
$C^{\infty}(\cC)^{Char(\cC)}\cong C^{\infty}(T^*[1]\un{C})$, which furthermore preserves brackets because $pr$ preserves the Schouten bracket.
\end{proof}

\begin{remark}\label{remC}
We describe in classical terms the construction of the quotient $\un{\cC}$ from $\cC$, which in Prop.  \ref{pres2} has been described algebraically (i.e., in terms of functions).
Notice that the submanifold $C$ is endowed with the foliation integrating $E\cap TC$, hence  the normal bundle to the foliation, which is  $TC/(E\cap TC)$, is endowed with a flat $E\cap TC$-connection
(the Bott connection), defined using the Lie bracket. Hence  the dual bundle
is endowed with the dual connection, which is also flat. Explicitly, the
dual bundle is $(E\cap TC)^{\circ}/TC^{\circ}$ and the dual connection
$\nabla_X \un{\xi}:=\un{\cL_X\xi}$ where $X\in E\cap TC$ and $\xi\in \Gamma((E\cap TC)^{\circ})$
is a lift of $\un{\xi}\in \Gamma((E\cap TC)^{\circ}/TC^{\circ})$. (Here all annihilators are taken in $TM|_C$).

The construction of $\un{\cC}$ is as follows: starting from the vector bundle $E^{\circ}\rightarrow C$, quotient the fibers by the intersection  with $TC^{\circ}$,
to obtain $E^{\circ}/(TC^{\circ}\cap E^{\circ})\cong (E\cap TC)^{\circ}/TC^{\circ}\rightarrow C$, then identify fibers lying over the same leaf of $TC\cap E$ using the
flat connection $\nabla$. The parallel sections of $\nabla$ are exactly the pullbacks of 1-forms on $\un{C}$,
so the resulting quotient is $T^*\un{C}$.
\end{remark}

Now endow $M$ with a Poisson tensor $\pi$, corresponding to a function $\cS$ on $T^*[1]M$.
We address the issue of when the function $\cS$ induces a
function $\un{\cS}$ on the quotient $\un{\cC}:=T^*[1]\un{C}$, where
 $\un{C}:=C/F$ for 
$F:=TC\cap E$. {Recall that the functions on $\un{\cC}$ are realized as $\cN(\cI)/(\cN(\cI) \cap\cI)$, see
Remark \ref{rem:charN}.}

\begin{lemma}\label{desc}
$\cS$ descends if{f}
\begin{eqnarray}\label{condi1}
\sharp E^{\circ}&\subset& E+TC\\
\label{condi2}
\{C^{\infty}_E(M),C^{\infty}_E(M)\}_M&\subset& C^{\infty}_{F}(M).
\end{eqnarray}In this case the induced almost-Poisson bracket on $C^{\infty}(\un{C})$ is computed by lifting
to functions in $C^{\infty}_E(M)$ and applying the Poisson bracket of $M$.
 \end{lemma}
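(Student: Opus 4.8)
The plan is to reduce the whole statement to the single algebraic requirement that $\cS$ descend, namely $\cS\in\cN(\cI)+\cI$, and then unwind it degree by degree. By \eqref{eq:INI} the condition $\cS\in\cN(\cI)+\cI$ is equivalent to $\{\cS,\cN(\cI)\cap\cI\}\subset\cI$, and since $\cS$ has degree $2$ while $\cN(\cI)\cap\cI$ is concentrated in degrees $0$ and $1$ (described explicitly in \eqref{INI}), this splits into $\{\cS,(\cN(\cI)\cap\cI)_0\}\subset\cI$ and $\{\cS,(\cN(\cI)\cap\cI)_1\}\subset\cI$. I will show the first is equivalent to \eqref{condi1}, and that, in the presence of \eqref{condi1}, the second is equivalent to \eqref{condi2}; this yields both implications of the \,if{f}.

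For the degree $0$ piece, recall $\{\cS,f\}=\sharp\,df$. As $f$ runs over $(\cN(\cI)\cap\cI)_0=\{f\in Z(C):df|_C\subset E^{\circ}\}$, the covectors $df|_C$ run over all of $(TC+E)^{\circ}=TC^{\circ}\cap E^{\circ}$, so the requirement $\{\cS,(\cN(\cI)\cap\cI)_0\}\subset\cI_1=\tGamma(E)$ reads $\sharp\big((TC+E)^{\circ}\big)\subset E$. Dualizing by means of the skew-symmetry $\langle\eta,\sharp\xi\rangle=-\langle\xi,\sharp\eta\rangle$ of $\sharp$ turns this into $\sharp E^{\circ}\subset E+TC$, i.e.\ exactly \eqref{condi1}; this dualization is the only manipulation here and is routine.

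The degree $1$ piece is the heart of the matter. For $X\in(\cN(\cI)\cap\cI)_1$ we have $\{\cS,X\}=-\cL_X\pi$, and $-\cL_X\pi\in\cI_2$ is equivalent to $(\cL_X\pi)(df,dg)|_C=0$ for all $f,g\in C^{\infty}_E(M)$ (testing the bivector against $E^{\circ}\wedge E^{\circ}$, using that $df|_C$ sweeps out $E^{\circ}$). Using the identity $(\cL_X\pi)(df,dg)=X\{f,g\}_M-\{Xf,g\}_M-\{f,Xg\}_M$ (eq.~\eqref{Xfg}), I will show the two cross terms vanish once \eqref{condi1} is assumed, leaving the single term $X\{f,g\}_M|_C=\langle d\{f,g\}_M|_C,X|_C\rangle$; since $X|_C$ sweeps out $F$ as $X$ varies, its vanishing for all such $X,f,g$ is precisely $d\{f,g\}_M|_C\subset F^{\circ}$, i.e.\ \eqref{condi2}. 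The main obstacle is the vanishing of the cross terms. The key point is that $Xf$ is not merely in $Z(C)$ but in fact lies in $C^{\infty}_E(M)$: this uses the defining property $[X,\tGamma(E)]\subset\tGamma(E)$ of elements of $(\cN(\cI)\cap\cI)_1$ together with $X|_C\in TC$, via the expansion $Y(Xf)=[Y,X]f+X(Yf)$ for $Y\in\tGamma(E)$. Consequently $d(Xf)|_C\subset E^{\circ}\cap TC^{\circ}=(E+TC)^{\circ}$, and rewriting $\{Xf,g\}_M|_C=-\langle d(Xf)|_C,\sharp\,dg|_C\rangle$ with $\sharp\,dg|_C\in E+TC$ (by \eqref{condi1}, since $dg|_C\subset E^{\circ}$) gives $0$; the term $\{f,Xg\}_M|_C$ vanishes symmetrically.

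Finally, for the formula for the reduced bracket, I will invoke that the isomorphism $\cN(\cI)/(\cN(\cI)\cap\cI)\cong C(T^*[1]\un{C})$ of Prop.~\ref{pres2} preserves the Poisson bracket, so the degree $2$ function $\un{\cS}$ obtained from $\cS$ induces the bracket on $C^{\infty}(\un{C})$ through the derived bracket $\{h_1,h_2\}_{\un{C}}=\{\{\un{\cS},h_1\},h_2\}$. Lifting $h_1,h_2$ to representatives $f_1,f_2\in\cN(\cI)_0=C^{\infty}_E(M)$ and using the derived-bracket formula $\{f,g\}_M=\{\{\cS,f\},g\}$ on $M$ shows that $\{h_1,h_2\}_{\un{C}}$ is represented by $\{f_1,f_2\}_M$; by \eqref{condi2} this lies in $C^{\infty}_{F}(M)$ and hence descends along $pr\colon C\to\un{C}$, which is exactly the asserted recipe. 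It is \emph{almost}-Poisson only because self-commutation of $\un{\cS}$, needed for the Jacobi identity, is not yet imposed here.
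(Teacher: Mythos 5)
Your proof of the equivalence itself follows the paper's route step for step: reduce to $\cS\in\cN(\cI)+\cI$, convert this via \eqref{eq:INI} into $\{\cS,\cN(\cI)\cap\cI\}\subset\cI$, and translate the degree $0$ and degree $1$ pieces using $\{\cS,f\}=\sharp df$ and the identity \eqref{Xfg}. Your degree $1$ analysis is in fact \emph{more} complete than the paper's: the verification that $d(Xf)|_C\subset E^{\circ}\cap TC^{\circ}$ via $Y(Xf)=[Y,X]f+X(Yf)$ is exactly the detail hiding behind the paper's phrase ``the last two terms vanish on $C$ because of \eqref{condi1}'', and your dualization $\sharp\bigl((TC+E)^{\circ}\bigr)\subset E\Leftrightarrow\sharp E^{\circ}\subset TC+E$ is correct. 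However, the reason you give for restricting to degrees $0$ and $1$ is false: $\cN(\cI)\cap\cI$ is \emph{not} concentrated in degrees $0$ and $1$. Equation \eqref{INI} describes those two components; it does not say the higher ones vanish. For instance, when $\cC$ is coisotropic one has $\cN(\cI)\cap\cI=\cI$, which is an ideal with components in every degree, and in general $\cN(\cI)\cap\cI$ contains all products of its degree $1$ elements. The reduction of $\{\cS,\cN(\cI)\cap\cI\}\subset\cI$ to its degree $0$ and degree $1$ parts is true but requires an argument (essentially that the characteristic distribution of $\cC$ is generated over $C(\cC)$ by the Hamiltonian vector fields of elements of $(\cN(\cI)\cap\cI)_0$ and $(\cN(\cI)\cap\cI)_1$); the paper glosses this point as well, but it does not assert the false statement you use in its place.

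The second gap is in the bracket formula. The bracket on $\cN(\cI)/(\cN(\cI)\cap\cI)$ is computed by lifting \emph{into} $\cN(\cI)$, and since $\cS$ in general lies only in $\cN(\cI)+\cI$, the derived bracket on the quotient must be taken with a representative $\hat{\cS}\in\cN(\cI)$ satisfying $\hat{\cS}-\cS\in\cI$, not with $\cS$ itself. So your step ``using the derived-bracket formula $\{f,g\}_M=\{\{\cS,f\},g\}$ shows that $\{h_1,h_2\}_{\un{C}}$ is represented by $\{f_1,f_2\}_M$'' is missing the bridge $\{\{\hat{\cS},f_1\},f_2\}\equiv\{\{\cS,f_1\},f_2\}\text{ mod }\cI$, which holds because $\hat{\cS}-\cS\in\cI$, $f_1,f_2\in\cN(\cI)$ and $\{\cN(\cI),\cI\}\subset\cI$; this is precisely the third equality in the paper's chain of identities. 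The fix is one line, but it should not be elided, because blurring the distinction between $\cS$ and $\hat{\cS}$ proves too much: if derived brackets with $\cS$ computed the quotient bracket on the nose, then iterating them and using $\{\cS,\cS\}=0$ would yield the Jacobi identity for $\{\cdot,\cdot\}_{\un{C}}$ unconditionally, contradicting Ex.~\ref{counterex}. This distinction is the very reason the conclusion is only ``almost-Poisson'' and why Lemma~\ref{halfnorm} and the reduction in stages of Section~\ref{sec:stages} are needed at all.
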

\begin{proof}
 $\cS$ descends if{f} its image under the restriction map $C^{\infty}(\cM)\rightarrow C^{\infty}(\cM)/\cI$
lies in
 $\cN(\cI)/\cN(\cI)\cap \cI$, i.e., if{f} $\cS$ lies in $\cN(\cI)+ \cI$. This is equivalent   to $\{\cS,\cN(\cI)\cap \cI\}\subset  \cI$ by eq. \eqref{eq:INI}.

Using eq. \eqref{INI} one sees that $\{\cS,\cN(\cI)_0\cap \cI_0\}\subset \cI_1$ is  equivalent to eq.  \eqref{condi1}.
 $\{\cS,\cN(\cI)_1\cap \cI_1\}\in \cI_2$ means $[\pi,X]
 \in \tilde{\Gamma}(E\wedge TM|_C)$ for all $X\in \cN(\cI)_1\cap \cI_1$.
This is seen to be equivalent to eq.
  \eqref{condi2} as follows: apply the identity
\begin{equation}\label{Xfg} X\{f,g\}=
(\cL_X
\pi)(df,dg)+\pi(d(Xf),dg)+\pi(df,d(Xg))\end{equation}
to $f,g \in C^{\infty}_E(M)$, and observe that the last two terms vanish on $C$ because of \eqref{condi1}.

To compute the Poisson bracket of functions $\un{f},\un{g}$ on $\un{C}$ we fix extensions
$\hat{f},\hat{g} \in C^{\infty}_E(M)$. Fix a choice of function
$\hat{\cS}\in \cN(\cI)$ with $\hat{\cS}-\cS\in \cI$ (therefore also $\un{\hat{\cS}}=\un{\cS}$). Such a function exists because the fact that $\cS$ descends means that $\cS\in \cN(\cI)+\cI$.   We compute
$$\{\un{f},\un{g}\}_{\un{C}}=
\{\{\un{\cS},\un{f}\}_{\un{\cC}},\un{g}\}_{\un{\cC}}
=\{\{\hat{\cS},\hat{f}\},\hat{g}\} \text{ mod } \cI =  \{\{\cS,\hat{f}\},\hat{g}\}\text{ mod } \cI= (\{\hat{f},\hat{g}\}_M)|_C,$$
where the third equality holds because $\hat{\cS}-\cS\in \cI$ and $\hat{f},\hat{g}\in \cN(\cI)$.
  \end{proof}

When $\cS$ descends,
 $\un{\cS}$ might not commute with itself, as in Ex. \ref{counterex}. The reason is that  the Poisson bracket on
 $\cN(\cI)/\cN(\cI)\cap \cI$ is computed taking the derived bracket not with $\cS \in \cN(\cI)+\cI$, but rather with an extension of $\cS|_{\cC}$ lying in $\cN(\cI)$.

It is clear that if $\cS$ lies in $\cN(\cI)$, then the induced function on $\un{\cC}$ still commutes with itself. It turns out that it suffices to require that $\cS$ satisfies the normalizer condition in degree $0$.
\begin{lemma}\label{halfnorm}
Suppose that $\cS$ descends.
A sufficient condition to guarantee
$\{\un{\cS}, \un{\cS}\}=0$ is $\{\cS,\cI_0\}\subset \cI_1$ (or equivalently
$\sharp TC^{\circ}\subset E$).
\end{lemma}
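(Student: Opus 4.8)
The plan is to reduce the graded statement $\{\un{\cS},\un{\cS}\}=0$ to the Jacobi identity for the concrete reduced bracket on $\un{C}$ produced in Lemma~\ref{desc}, and then to exploit the hypothesis, which in classical terms reads $\sharp N^*C\subset E$. Indeed, since $\{\cS,f\}=\sharp df$ for $f\in C_0(\cM)$ and $df|_C$ ranges over $N^*C=TC^{\circ}$ as $f$ ranges over $\cI_0=Z(C)$, the condition $\{\cS,\cI_0\}\subset\cI_1$ is exactly $\sharp TC^{\circ}\subset E$, which gives the asserted equivalence. The function $\un{\cS}$ corresponds to a bivector on $\un{C}$ whose almost-Poisson bracket is, by Lemma~\ref{desc}, $\{\un{f},\un{g}\}_{\un{C}}=(\{\hat{f},\hat{g}\}_M)|_C$ for lifts $\hat{f},\hat{g}\in C^{\infty}_E(M)$, and $\{\un{\cS},\un{\cS}\}=0$ holds if and only if this bracket satisfies the Jacobi identity. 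So it suffices to verify Jacobi.

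Next I would set up the computation of the Jacobiator. Pick three functions on $\un{C}$ and lift them to $\hat{f},\hat{g},\hat{h}\in C^{\infty}_E(M)=\cN(\cI)_0$. By eq.~\eqref{condi2} the function $\psi:=\{\hat{f},\hat{g}\}_M$ lies in $C^{\infty}_{F}(M)$, hence $\psi|_C$ is basic and descends to $\{\un{f},\un{g}\}_{\un{C}}$. To iterate the bracket I must use a genuine lift in $C^{\infty}_E(M)$: by Prop.~\ref{pres2} the space $\cN(\cI)_0=C^{\infty}_E(M)$ surjects onto $C^{\infty}(\un{C})$, so I choose $\tilde{\psi}\in C^{\infty}_E(M)$ projecting to $\{\un{f},\un{g}\}_{\un{C}}$. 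Both $\tilde{\psi}|_C$ and $\psi|_C$ are the pullback of $\{\un{f},\un{g}\}_{\un{C}}$ to $C$, so they coincide on $C$ and $\chi:=\tilde{\psi}-\psi\in Z(C)=\cI_0$.

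The crux is the identity $\{\chi,\hat{h}\}_M|_C=0$. Writing $\{\chi,\hat{h}\}_M=(\sharp d\chi)(\hat{h})$ and using $d\chi|_C\in N^*C$ (since $\chi\in Z(C)$), the hypothesis $\sharp N^*C\subset E$ gives $\sharp d\chi|_C\in E$, while $\hat{h}\in C^{\infty}_E(M)$ means $d\hat{h}|_C$ annihilates $E$; pairing the two yields $0$. Consequently $\{\tilde{\psi},\hat{h}\}_M|_C=\{\{\hat{f},\hat{g}\}_M,\hat{h}\}_M|_C$, so that $\{\{\un{f},\un{g}\}_{\un{C}},\un{h}\}_{\un{C}}$ is the descent of $\{\{\hat{f},\hat{g}\}_M,\hat{h}\}_M|_C$. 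Summing cyclically and invoking the Jacobi identity of $\{\cdot,\cdot\}_M$ on $M$ shows that the reduced Jacobiator is the restriction of a vanishing expression, hence $\{\un{\cS},\un{\cS}\}=0$.

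The main obstacle --- and the whole point of the lemma --- is the passage from the $M$\ndash bracket $\psi=\{\hat{f},\hat{g}\}_M$, which by \eqref{condi2} is only $F$\ndash basic, to a lift $\tilde{\psi}$ that genuinely annihilates all of $E$; the two differ by an element $\chi$ of $Z(C)=\cI_0$, a \emph{degree zero} discrepancy. This is precisely why only the degree zero part of the normalizer condition, $\{\cS,\cI_0\}\subset\cI_1$, is needed in order to kill the error, and why the full condition $\cS\in\cN(\cI)$ (which would in addition constrain the degree one behaviour through $\cL_X\pi$) is unnecessary. The one technical point to check carefully is the existence of the lift $\tilde{\psi}$ together with the fact that it agrees with $\psi$ on $C$; both follow from the surjectivity of $\cN(\cI)_0\to C^{\infty}(\un{C})$ established in Prop.~\ref{pres2}.
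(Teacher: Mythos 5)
Your proof is correct, and it succeeds by the same mechanism as the paper's: compute the iterated bracket through lifts in $C^{\infty}_E(M)$, isolate the degree-zero discrepancy between the naive bracket $\{\hat{f},\hat{g}\}_M$ and a genuine lift of $\{\un{f},\un{g}\}_{\un{C}}$, kill its contribution using $\sharp TC^{\circ}\subset E$, and finish with the cyclic sum and the Jacobi identity on $M$. The difference is where the good lift comes from, and it is worth noting. The paper stays in the graded algebra: it fixes $\hat{\cS}\in\cN(\cI)$ with $\hat{\cS}-\cS\in\cI$ (available because $\cS$ descends) and takes as good lift the derived bracket $\hat{V}=\{\{\hat{\cS},\hat{f}\},\hat{g}\}\in\cN(\cI)_0$, so the discrepancy is the \emph{explicit} element $k=\{\{\hat{\cS}-\cS,\hat{f}\},\hat{g}\}\in\cI_0$; the hypothesis $\{\cS,\cI_0\}\subset\cI_1$ together with $\hat{h}\in\cN(\cI)$ then forces $\{\{\cS,k\},\hat{h}\}\in\cI$. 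You dispense with $\hat{\cS}$ entirely and produce your lift $\tilde{\psi}\in C^{\infty}_E(M)$ from the surjectivity of $\cN(\cI)_0\to C^{\infty}(\un{C})$ established in the proof of Prop.~\ref{pres2}; your $\chi=\tilde{\psi}-\{\hat{f},\hat{g}\}_M\in Z(C)$ and the pairing computation $\langle\sharp\, d\chi, d\hat{h}\rangle|_C=0$ are exactly the classical translations of the paper's $k$ and of $\{\{\cS,k\},\hat{h}\}\in\cI$. Your route is more elementary and self-contained on the classical side, which is a virtue; what the paper's version buys is reusability downstream: in Thm.~\ref{coisopre} the identical computation is re-run in the reduction-in-stages setting, and there it is essential that $k$ has the explicit derived-bracket form $\{\{\hat{\cS}-\cS,\hat{f}\},\hat{g}\}$, since one must show $k\in\cN_{\cC}(\cI_{\cA})$ by bracket manipulations --- an anonymous lift like your $\tilde{\psi}$ carries no such formula and would not feed into that argument.
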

\begin{proof}
Let $\un{f},\un{g},\un{h}$ functions on $\un{C}$, and fix extensions $\hat{f},\hat{g},\hat{h}$
 to functions in $\cN(\cI)_0=C^{\infty}_E(M)$.
Fix a choice of function
$\hat{\cS}\in \cN(\cI)$ with $\hat{\cS}-\cS\in \cI$ (therefore also $\un{\hat{\cS}}=\un{\cS}$). 
 To simplify the notation we denote $\un{V}:=\{\un{f},\un{g}\}_{\un{C}}$, and 
$\hat{V}:=\{\{\hat{\cS},\hat{f}\},\hat{g}\}$ is an extension lying in $\cN(\cI)_0$.
We have
\begin{eqnarray*}
\{\{\un{f},\un{g}\}_{\un{C}},\un{h}\}_{\un{C}}=
\{\un{V},\un{h}\}_{\un{C}}&=& \{\{\un{\cS},\un{V}\}_{\un{\cC}},\un{h}\}_{\un{\cC}}\\
&=&\{\{\hat{\cS},\hat{V}\},\hat{h}\}\text{ mod } \cI\\ \label{jacobib}
&=& \{\{\cS,\hat{V}\},\hat{h}\}\text{ mod } \cI,
\end{eqnarray*}
where we used the property $\hat{\cS}-\cS\in \cI$ in the last equality. The same property
assures that
$$\hat{V}:=\{\{\hat{\cS},\hat{f}\},\hat{g}\}=\{\{{\cS},\hat{f}\},\hat{g}\}+k$$ for some $k\in \cI_0$.
Putting this together we obtain
$$\{\{\un{f},\un{g}\}_{\un{C}},\un{h}\}_{\un{C}}=\{\{\hat{f},\hat{g}\}_M, \hat{h}\}_M+ \{\{\cS,k\},\hat{h}\} \text{ mod } \cI.$$
By assumption $\{\cS,k\}\in \cI_1$, and since $\hat{h}\in \cN(\cI)$ the whole second term lies in $\cI$. Taking the cyclic sum shows that the Jacobiator of $\un{f},\un{g},\un{h}$
vanishes. 

Since $\{\bullet,\bullet\}_{M}$ satisfies the Jacobi identity we conclude that $\{\bullet,\bullet\}_{\un{C}}$ also satisfies the Jacobi identity, i.e., that $\{\un{\cS}, \un{\cS}\}=0$.
\end{proof}

We summarize in classical terms the results obtained in this Section:
\begin{prop}\label{prescase}
Let $C$ be a submanifold of a Poisson manifold $(M,\pi)$ and $E$ a subbundle of $TM|_C$ such
 $F:=TC\cap E$ is a constant rank, involutive distribution on $C$. Assume that
  the quotient $\un{C}:=C/(TC\cap E)$ is smooth. If
\begin{equation*} \sharp E^{\circ}\subset TC \end{equation*}
and 
\begin{equation*}
\{C^{\infty}_E(M),C^{\infty}_E(M)\}_M\subset C^{\infty}_{F}(M).
\end{equation*}
then $\un{C}$ has an induced Poisson structure. Its Poisson  bracket
  is computed lifting functions on $\un{C}$ to functions in
$C^{\infty}_E(M)$.
\end{prop}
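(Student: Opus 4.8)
The plan is to observe that Prop.~\ref{prescase} is simply the classical repackaging of the graded construction carried out in this Section, so the proof should be an assembly of the preceding results rather than a new argument. First I would pass to the graded picture: by Prop.~\ref{dimitri} the Poisson bivector $\pi$ corresponds to a degree $2$ function $\cS$ on $\cM=T^*[1]M$ with $\{\cS,\cS\}=0$, while the pair $(C,E)$ determines the graded submanifold $\cC=E^{\circ}[1]$. Since by hypothesis $F=TC\cap E$ is a constant-rank involutive distribution, Prop.~\ref{pres1} shows that $\cC$ is presymplectic; and since $\un{C}=C/F$ is assumed smooth, Prop.~\ref{pres2} identifies the presymplectic quotient $\un{\cC}$ canonically with $T^*[1]\un{C}$ as a graded symplectic manifold.

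Next I would apply Lemma~\ref{desc} to conclude that $\cS$ descends to a function $\un{\cS}$ on $T^*[1]\un{C}$. This requires the two conditions \eqref{condi1} and \eqref{condi2}. The second, $\{C^{\infty}_E(M),C^{\infty}_E(M)\}_M\subset C^{\infty}_{F}(M)$, is precisely the second hypothesis of the proposition. The first, $\sharp E^{\circ}\subset E+TC$, is weaker than the stated hypothesis $\sharp E^{\circ}\subset TC$ and so holds automatically, as $TC\subset E+TC$. By the last part of Lemma~\ref{desc}, the induced almost-Poisson bracket on $C^{\infty}(\un{C})$ is then computed by lifting to $C^{\infty}_E(M)$ and applying $\{\cdot,\cdot\}_M$, which is exactly the bracket formula asserted in the final sentence of the proposition.

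It then remains to verify that the descended bracket is genuinely Poisson, i.e. that $\{\un{\cS},\un{\cS}\}=0$. For this I would invoke Lemma~\ref{halfnorm}, whose sufficient condition reads $\sharp TC^{\circ}\subset E$. The one step that is not pure bookkeeping is reconciling this with the way the hypothesis is phrased, namely $\sharp E^{\circ}\subset TC$; so the main point to check is that these two inclusions are in fact equivalent. I expect this to follow from the skew-symmetry of $\pi$: for $\xi\in E^{\circ}$ and $\eta\in TC^{\circ}$ (all annihilators taken in $T^*M|_C$) one has $\langle \eta,\sharp\xi\rangle=\pi(\xi,\eta)=-\pi(\eta,\xi)=-\langle\xi,\sharp\eta\rangle$, whence $\sharp\xi\in (TC^{\circ})^{\circ}=TC$ for all $\xi\in E^{\circ}$ if and only if $\sharp\eta\in (E^{\circ})^{\circ}=E$ for all $\eta\in TC^{\circ}$. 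Thus $\sharp E^{\circ}\subset TC$ delivers exactly the input needed by Lemma~\ref{halfnorm}.

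Finally, applying Prop.~\ref{dimitri} in reverse to the self-commuting degree $2$ function $\un{\cS}$ on $T^*[1]\un{C}$ produces the desired Poisson structure on $\un{C}$, completing the argument. In summary, every step is a direct application of a Lemma or Proposition already established in this Section; the only genuinely substantive observation is the equivalence $\sharp E^{\circ}\subset TC \Leftrightarrow \sharp TC^{\circ}\subset E$, which is what allows the single stated hypothesis to feed simultaneously into the descent condition of Lemma~\ref{desc} and the self-commutation condition of Lemma~\ref{halfnorm}.
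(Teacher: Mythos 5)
Your proposal is correct and follows essentially the same route as the paper's own proof: Prop.~\ref{pres1} and Prop.~\ref{pres2} to identify the quotient $\un{\cC}$ with $T^*[1]\un{C}$, Lemma~\ref{desc} for the descent of $\cS$ and the bracket formula, Lemma~\ref{halfnorm} for $\{\un{\cS},\un{\cS}\}=0$, and Prop.~\ref{dimitri} to conclude. Your explicit check that $\sharp E^{\circ}\subset TC$ is equivalent, by skew-symmetry of $\pi$ and double annihilators, to the condition $\sharp TC^{\circ}\subset E$ required by Lemma~\ref{halfnorm} is a point the paper leaves implicit, and your verification of it is correct.
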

\begin{proof}
By Prop. \ref{pres1} and  \ref{pres2} $\cC:=E^{\circ}[1]$ is a presymplectic submanifold of the symplectic graded manifold $T^*[1]M$ such that its quotient $\un{\cC}$ is canonically symplectomorphic to $T^*[1]\un{C}$. By Lemma \ref{desc} the degree 2 function $\cS$ on $T^*[1]M$, which encodes the Poisson bivector $\pi$, descends to a function $\un{S}$ on $T^*[1]\un{C}$, and by Lemma \ref{halfnorm} $\un{\cS}$ commutes with itself. Hence $\un{S}$ corresponds to a Poisson bivector on $\un{C}$. Its Poisson bracket is computed as in Lemma \ref{desc}.
\end{proof}
Notice that when $E\subset TC$ 
we recover exactly the reduction statement in Prop. \ref{coisocase}. Prop. \ref{prescase} is a very mild extension of the Marsden--Ratiu  theorem \cite{MR} (see \cite{FZ}, where Prop. \ref{prescase} above appears as Prop. 4.1).

\section{Reduction in stages}\label{sec:stages}

In this Section we derive a sufficient condition for $\{\un{\cS},\un{\cS}\}=0$ which is weaker than the one of Lemma \ref{halfnorm}. To do so,  we perform reduction in stages in an algebraic fashion.
The corresponding geometric picture is the following refinement of the one outlined in Section \ref{mainidea}.

\begin{itemize}
\item[(a)] We imbed $\cC$ in a larger
 coisotropic\footnote{It seems more natural to require $\cA$ to be presymplectic instead of coisotropic. However this  more general statement delivers   conditions which reduce to Prop. \ref{A1} below.} 
submanifold $\cA$ of $\cM$. We assume that
 the presymplectic quotient
$\un{\cC}$ is smooth. Locally (i.e., if we choose small open sets $U\subset M$ and replace
 $\cM$ by $\cM_U$, $\cC$ by $\cC_{C\cap U}$ and  $\cA$ by $\cA_{A\cap U}$) 
we perform the two-stage reduction
 \begin{itemize}
\item[--]  take the image $\bar{\cC}$ of $\cC$ under the projection
$\cA\rightarrow \bar{\cA}:=\cA/T\cA^{\omega}$; assuming that $T\cC
\cap T\cA^{\omega}$ has constant rank,  $\bar{\cC}$ is a
presymplectic submanifold. 
\item[--] take the presymplectic quotient $\bbC$ of 
$\bar{\cC}$. It is a  symplectic graded manifold  
  symplectomorphic\footnote{If instead of restricting ourselves to
small open subsets $U$ of $M$ we work globally, we just get a map $\bbC \rightarrow \un{\cC}$ preserving symplectic structures.}
 to $\un{\cC}$.
\end{itemize}

\item[(b)] Assume that
\begin{itemize}
\item[1)] $\cS$ descends to $\un{\cC}$
\item[2)]  locally, $\cS$  descends to a function $\bar{\cS}$ on
$\bar{\cA}$
\item[$2^{\prime}$)]  locally, $\bar{\cS}$ satisfies the condition of Lemma \ref{halfnorm}, i.e.,
$\{\bar{\cS},(\cI_{ \bar{\cC}})_0\}\subset (\cI_{ \bar{\cC}})_1$.
\end{itemize}
Since $\cA$ is coisotropic, by   condition $2)$
the function $\bar{\cS}$ on
$\bar{\cA}$  self-commutes, and
 Lemma \ref{halfnorm} together with condition $2^{\prime}$)
 imply that the function $\bbS$ on $
\bbC$  commutes with itself.
By (a) hence the function $\un{\cS}$ on $\un{\cC}$ also commutes with itself\footnote{Notice that it is not relevant here
whether $\un{\cC}$ is globally diffeomorphic to $\bbC$:  we assume that we have a well-defined
  function $\un{\cS}$ on $\un{\cC}$ and use $\bbC$
only to check   a local property of $\un{\cS}$, namely that it
commutes with itself.}, and therefore corresponds to a Poisson bivector field on the body of $\un{\cC}$.
\end{itemize}

In the remainder of this Section we will phrase  (a slightly more general version of) the above construction
in algebraic terms and proof that it really delivers a Poisson structure on the body of $\un{\cC}$. We perform Step  (a) in Lemma \ref{lift} and Step (b) in
Thm. \ref{coisopre}, and we translate into  classical geometrical terms in Thm. \ref{A1} and Thm. \ref{A2}.\\

Let $M$ be a smooth manifold, $\cC$ a presymplectic submanifold of $\cM=T^*[1]M$ and $\cA$ a coisotropic submanifold
containing $\cC$. Write $\cC=E^{\circ}[1]$ for a subbundle $E \rightarrow C$ of $TM$.
$E\cap TC$ is an involutive
constant rank distribution since $\cC$ is presymplectic  (Prop. \ref{pres1}), and we   assume that $\un{C}:=C/(E\cap TC)$ is
smooth.  Write
$\cA=D^{\circ}[1]$ for another   subbundle $D \rightarrow A$. $D$ an
 integrable distribution on $A$ since $\cA$ is coisotropic (Prop \ref{coisocase}). Further $C \subset A$ and
$D|_C\subset E$, since $\cC \subset \cA$.
\begin{lemma}\label{lemmaconstrk}
$T\cC\cap T\cA^{\omega}$ has constant rank if{f} the following compatibility conditions  between $E\rightarrow C$ and $D\rightarrow A$ are satisfied:
 \begin{eqnarray}
\label{ctcrk} &&D|_C \cap TC \text{ has constant rank}\\
\label{etark} && E\cap TA|_C \text{ has constant rank}\\
\label{prcond}
&& \text{The flows of vector fields }Y\in \Gamma(D)\subset \chi(A) \text{ with }Y|_C\in \chi(C)
\text{ preserves }E\cap TA|_C.
\end{eqnarray}
\end{lemma}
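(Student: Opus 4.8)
The plan is to reduce the statement to a computation of graded tangent spaces along the body $C$ and then to isolate the genuinely graded part of the constant-rank condition of Definition~\ref{constrk}, in close analogy with the proof of Proposition~\ref{pres1}.

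First I would compute the relevant tangent spaces pointwise. At a body point $p\in C$ the graded tangent space of $\cM=T^*[1]M$ is $(T_p\cM)_0=T_pM$ and $(T_p\cM)_{-1}=T_p^*M$, and the degree $1$ symplectic form restricts to the canonical pairing between these two summands; hence for a graded submanifold $V=W^{\circ}[1]$ one gets $(V^{\omega})_0=(V_{-1})^{\circ}$ and $(V^{\omega})_{-1}=(V_0)^{\circ}$. For $\cC=E^{\circ}[1]$ this gives $(T_p\cC)_0=T_pC$ and $(T_p\cC)_{-1}=E^{\circ}_p$, and, using that $\cA=D^{\circ}[1]$ is coisotropic, $(T_p\cA^{\omega})_0=D_p$ and $(T_p\cA^{\omega})_{-1}=N^*_pA$. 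Intersecting degreewise yields
\[
(T\cC\cap T\cA^{\omega})_0=D|_C\cap TC,\qquad (T\cC\cap T\cA^{\omega})_{-1}=E^{\circ}\cap N^*A=(E+TA)^{\circ}.
\]
Since $E$ is a bundle and $A$ a submanifold, $\dim(E+TA)_p=\mathrm{rk}\,E+\dim A-\dim(E\cap TA|_C)_p$, so the \emph{body} dimension of $T\cC\cap T\cA^{\omega}$ is locally constant if and only if both $D|_C\cap TC$ and $E\cap TA|_C$ have constant rank, i.e. if and only if \eqref{ctcrk} and \eqref{etark} hold.

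Next I would account for the gap between constancy of the body rank and the constant rank of Definition~\ref{constrk}: by Remark~\ref{degzeropart} the latter is strictly stronger, the missing ingredient being a degree $1$ (Lie-derivative) condition. Writing $\phi_I$ for local generators of $\cI_{\cA}$ and $\psi_J$ for local generators of $\cI_{\cC}$, an element $\sum_I c^I X_{\phi_I}$ of $T\cA^{\omega}$ is tangent to $\cC$ precisely when $\sum_I c^I\{\phi_I,\psi_J\}\in\cI_{\cC}$ for all $J$; since $\cA$ is coisotropic, $T\cA^{\omega}$ itself has constant rank, so the rank of $T\cC\cap T\cA^{\omega}$ is governed by that of the matrix $B_{JI}:=\{\phi_I,\psi_J\}\ \mathrm{mod}\ \cI_{\cC}$. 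Choosing $\phi_I=(f_a,Y_\mu)$ with $f_a$ cutting out $A$ and $Y_\mu$ generating $\tGamma(D)$, and $\psi_J=(g_b,X_\nu)$ with $g_b$ cutting out $C$ and $X_\nu$ generating $\tGamma(E)$, one has $\{f_a,g_b\}=0$ and
\[
B=\begin{pmatrix} 0 & Y_\mu(g_b)\\ -X_\nu(f_a) & [Y_\mu,X_\nu]\end{pmatrix}\ \mathrm{mod}\ \cI_{\cC}.
\]
The two degree $0$ off-diagonal blocks $-X_\nu(f_a)$ and $Y_\mu(g_b)$ have constant body rank exactly under \eqref{etark} and \eqref{ctcrk}; using them, as in the reduction to the invertible blocks $\alpha,\beta$ in Proposition~\ref{pres1}, to clear columns one is left with the columns $Y_{\mu}$ for which $Y_{\mu}|_C\in TC$ (hence $Y_{\mu}|_C\in D|_C\cap TC$), whose only surviving entries lie in the $X_\nu$-rows with $X_\nu\in\tGamma(E\cap TA)$ and read $[Y_{\mu},X_\nu]\ \mathrm{mod}\ \cI_{\cC}$. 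As these degree $1$ entries vanish on the body, constant rank forces them to lie in $\cI_{\cC}$, i.e. $[Y_{\mu},\tGamma(E\cap TA)]\subset\tGamma(E\cap TA)$ along $C$, which is the infinitesimal form of \eqref{prcond}.

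Finally I would assemble the two implications: if $T\cC\cap T\cA^{\omega}$ has constant rank, Remark~\ref{degzeropart} yields \eqref{ctcrk} and \eqref{etark} while vanishing of the residual block yields \eqref{prcond}; conversely the three conditions bring $B$ into the normal form of Definition~\ref{constrk}, reversing the argument. Conceptually the same three conditions are precisely what makes the image $\bar\cC$ of $\cC$ in $\bar\cA=\cA/T\cA^{\omega}=T^*[1]\un{A}$ a graded submanifold $\bar E^{\circ}[1]$: \eqref{ctcrk} makes $\bar C=C/(D|_C\cap TC)$ smooth, while \eqref{etark} and \eqref{prcond} make $E\cap TA|_C$ descend to a constant-rank subbundle $\bar E$ over $\bar C$. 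The main obstacle I anticipate is the bookkeeping of the middle step: organizing the graded module $T\cC\cap T\cA^{\omega}$ through $B$, correctly clearing the degree $0$ blocks, and verifying that the distribution whose invariance is forced is $E\cap TA|_C$ rather than $E$ itself, the restriction to the $TA$-part arising exactly from clearing the $TA$-transverse generators $f_a$ against the $X_\nu$-rows.
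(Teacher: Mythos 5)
Your proposal is correct and follows essentially the same route as the paper's proof: both reduce the statement to constant rank of the matrix $\{\phi_I,\psi_J\}\bmod \cI_{\cC}$, extract \eqref{ctcrk} and \eqref{etark} from its degree-zero part (your pointwise graded tangent-space computation is a geometric variant of the paper's appeal to Remark \ref{degzeropart}), and identify the residual degree-one block $[Y_\mu,X_\nu]\bmod\cI_{\cC}$ (columns $Y_\mu$ with $Y_\mu|_C\in D|_C\cap TC$, rows $X_\nu\in\tGamma(E\cap TA|_C)$) whose vanishing mod $\cI_{\cC}$ is the infinitesimal form of \eqref{prcond}. The one small imprecision is at the end: the upgrade from $[Y_\mu,X_\nu]|_C\in E$ to $[Y_\mu,X_\nu]|_C\in E\cap TA|_C$ is not produced by clearing the $f_a$-columns (which only removes the $TA$-transverse rows from consideration) but by the separate observation, made in the paper, that $Y_\mu$ restricts to a vector field on $A$ tangent to $C$ along $C$, so its flow preserves $A$ and $C$ and tangency of the bracket to $A$ is automatic.
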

\begin{remark}  1) Condition \eqref{prcond} means that, assuming that $A/D$ is smooth and contains $C/(D|_C\cap TC)$ as a smooth submanifold, the projection
$pr \colon A\rightarrow A/D$ maps $E\cap TA|_C$ to a well-defined subbundle of $T(A/D)$.

2) In our later proofs we will make use only of the conditions \eqref{ctcrk},\eqref{etark}, \eqref{prcond}, and not of the fact that they derive from the constant rank condition on $T\cC\cap T\cA^{\omega}$.
\end{remark}

\begin{proof}
The constant rank 
condition on $T\cC\cap T\cA^{\omega}$ is stated algebraically by saying that the matrix $\{\phi_i,\psi_j\}\;\;mod\;\;\cI_{\cC}$
has constant rank,
where $\phi_i$ are generators  of
$\cI_{\cC}$ and $\psi_j$ generators   of $\cI_{\cA}$.

Assume that $T\cC\cap T\cA^{\omega}$ has constant rank. 
From the above characterization
 it follows that conditions \eqref{ctcrk} and \eqref{etark} hold.
This allows to choose the
generators  $\phi_i$ of $\cI_{\cC}$ as follows: degree $0$
generators $f_i\in Z(C)$ so that the last elements annihilate $E$,
degree $1$ generators $X_i\in \tGamma(E)$ so that the first elements
lie in $TC\cap E$. We choose the generators  $\psi_j$ of
$\cI_{\cA}\subset \cI_{\cC}$ to be compatible with the above choice
of $\phi_i$'s in the following sense: the degree $0$ generators $g_i\in Z(A)$ are so
that the last elements (restricted to C) annihilate $E$, the degree $1$
generators $Y_i\in \tGamma(D)$ so that the first elements (restricted
to C) lie in $TC\cap D|_C$. 

We write down the 4 by 4 block-matrix 

\begin{equation}
\{\phi_i,\psi_j\}\;\;mod\;\;\cI_{\cC}=
\label{matrix}
\left(\begin{array}{c c|c c}
0 &0&0&\alpha \\
0 &0&0&0 \\
\hline
0 &0&0&*\\
\beta &0&\delta&*
\end{array}\right).
\end{equation}
(The ``$0$'' in the lower right block comes from involutivity of the distribution $TC\cap E$). We have $$\alpha=\{f_i,Y_j\}\;\;mod\;\;\cI_{\cC}$$ where the differentials of $f_i\in Z(C)$ 
restricted to a complement of $E\cap TC$ in $E$ form a frame, and the $Y_j\in \tGamma(D)$ are a frame for a complement of $D|_C\cap TC$ in $D|_C$. Hence the \emph{columns} of the block $\alpha$ are linearly independent. Similarly we have
$$\beta=\{X_i,g_j\}\;\;mod\;\;\cI_{\cC}$$ where $X_i\in \tGamma(E)$ are a frame for a complement of $E\cap TC$ in $E$ and the differentials of $g_j\in Z(A)$  restricted to  a complement of $E\cap TA|_C$ in $E$ form a frame. Since $E\cap TC\subset E\cap TA|_C$ the \emph{columns} of $\beta$ are linearly independent too. 

Hence the above matrix \eqref{matrix} has constant rank if{f} the following condition is satisfied: $\delta$ consist of $C^{\infty}_1(\cC)$-linear combinations of the columns of $\beta$.

Reordering the vector fields $X_i\in \tGamma(E)$ used to define $\beta$ 
  so that the first few of them lie in $E \cap TA|_C$, we may assume that $\beta=\left( \begin{smallmatrix}  0\\ INV \end{smallmatrix}  \right)$ where $INV$ is an invertible matrix.
The above condition on
$\delta$ then means that the top rows of $\delta$ are identically zero, i.e., that
$$[X_i,Y_j]\subset \tGamma(E)$$
  where the $X_i\in\tGamma(E)$ span a complement of $E\cap TC$ in $E\cap TA|_C$
and $Y_j \in \tGamma(D)$ form a frame for $D|_C\cap TC$ at points of $C$.
Using  the fact that $Y_j|_A$ is a  vector field on $A$ which restricted to $C$ lies in the 
 integrable distribution $E\cap TC$, we see that this is equivalent to 
\begin{equation}\label{proje}
[\tGamma(E\cap TA|_C), Y_i]\subset \tGamma(E\cap TA|_C) \text{ for each }i,
\end{equation}
which is just condition \eqref{prcond}.
Conversely, if we assume conditions \eqref{ctcrk},\eqref{etark}, \eqref{prcond}, reversing the argument shows that $T\cC\cap T\cA^{\omega}$ has constant rank.
\end{proof}

{Before discussing when a function descends to the presymplectic quotient of $\cC$, we need a lemma.}
Consider the 
 Poisson algebra (see Remark \ref{moti} for a motivation)
$$\cE:=\cN(\cI_{\cA})\cap [f: \{f, \cI_{\cC}\cap \cN(\cI_{\cA})\}\subset \cI_{\cC}].$$

\begin{lemma}\label{lift}
Assume eq. \eqref{ctcrk},
\eqref{etark}, and \eqref{prcond}. Then, locally on $C$,
every class in 
$\cN(\cI_{\cC})/\cN(\cI_{\cC}) \cap\cI_{\cC}$ admits a representative which lies in  $\cN(\cI_{\cC})\cap \cE=\cN(\cI_{\cC})\cap \cN(\cI_{\cA})$. In particular
 we have
$$ \cN(\cI_{\cC})/(\cN(\cI_{\cC}) \cap\cI_{\cC} )\;\subset\; \cE/(\cE \cap\cI_{\cC}),$$
where both sides are seen as subsets of $C^{\infty}(\cC)$.
\end{lemma}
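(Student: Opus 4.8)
The plan is to reduce the statement to degrees $0$ and $1$, and then to construct the required representatives by hand, descending the relevant data to $A/D$ and lifting it back up to $M$.

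First I would dispose of the claimed equality $\cN(\cI_{\cC})\cap\cE=\cN(\cI_{\cC})\cap\cN(\cI_{\cA})$. The inclusion ``$\subset$'' is immediate since $\cE\subset\cN(\cI_{\cA})$. For ``$\supset$'', if $f\in\cN(\cI_{\cC})\cap\cN(\cI_{\cA})$ then $\{f,\cI_{\cC}\}\subset\cI_{\cC}$, so a fortiori $\{f,\cI_{\cC}\cap\cN(\cI_{\cA})\}\subset\cI_{\cC}$; thus the extra condition defining $\cE$ is automatic and $f\in\cE$. Consequently it suffices to show that every class in $\cN(\cI_{\cC})/(\cN(\cI_{\cC})\cap\cI_{\cC})$ has a representative in $\cN(\cI_{\cC})\cap\cN(\cI_{\cA})$; the stated inclusion of quotients then follows by passing to images in $C(\cC)$.

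By Prop.~\ref{pres2} the algebra $\cN(\cI_{\cC})/(\cN(\cI_{\cC})\cap\cI_{\cC})$ is generated by its elements of degrees $0$ and $1$, so it is enough to treat those two degrees, using the descriptions $\cN(\cI_{\cC})_0=C^{\infty}_E(M)$ and $\cN(\cI_{\cC})_1=\{X\in\tGamma(TC):[X,\tGamma(E)]\subset\tGamma(E)\}$ from \eqref{NIzero}--\eqref{NIone} and the analogous ones for $\cA$ with $E,C$ replaced by $D,A$. Unwinding the membership conditions, in degree $0$ I must produce, given $f\in C^{\infty}_E(M)$, a function $f'$ with $f'|_C=f|_C$, $df'|_C\subset E^{\circ}$ and $df'|_A\subset D^{\circ}$; in degree $1$ I must produce, given the class of some $X$, a vector field $X'$ tangent to both $C$ and $A$, normalizing both $E$ and $D$, with $X'-X\in\tGamma(E)$.

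The construction proceeds by descending to $\un{A}:=A/D$ (smooth locally, since $D$ is involutive), with projection $pr$. In degree $0$, because $df|_C\subset E^{\circ}\subset(D|_C)^{\circ}$, the function $f|_C$ is constant along the leaves of $D|_C\cap TC$, which by \eqref{ctcrk} has constant rank and whose leaves are the connected components of $C\cap(\text{leaf of }D)$; hence, locally, $f|_C$ descends to $\un{A}$. By \eqref{prcond} the bundle $E\cap TA|_C$ projects to a well-defined subbundle $\un{E}$ of $T\un{A}$ along $pr(C)$, so I can choose a function on $\un{A}$ whose differential annihilates $\un{E}$ along $pr(C)$ and which restricts to the descended $f|_C$; pulling it back gives $f_A$ on $A$ with $df_A|_A\subset D^{\circ}$ and $df_A|_C$ annihilating $E\cap TA|_C$. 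Finally, using \eqref{etark} to split off a complement $G$ of $E\cap TA|_C$ in $E$ transverse to $TA$, I extend $f_A$ to $f'$ on $M$, prescribing the transverse derivative along $G$ so that $df'|_C\subset E^{\circ}$ while keeping $f'|_A=f_A$, whence $df'|_A\subset D^{\circ}$.

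The degree $1$ case follows the same pattern but for vector fields, and this is where I expect the main difficulty to lie. One first lifts the class to a projectable vector field on $\un{C}$, pushes the data to the triple $(\un{A},pr(C),\un{E})$ — which satisfies the hypotheses of Lemma~\ref{ext} with quotient $\un{C}$, by the computation $\un{E}\cap T(pr(C))=F/(D|_C\cap TC)$ — and applies Lemma~\ref{ext} there to obtain a vector field on $\un{A}$ normalizing $\un{E}$. One then lifts this field to $A$ via a connection for $pr\colon A\to\un{A}$ chosen so that the lift is tangent to $C$ and $D$-projectable (hence normalizes $D$ and, by \eqref{prcond}, normalizes $E\cap TA|_C$ along $C$), and finally extends from $A$ to $M$ by a horizontal lift as in Lemma~\ref{ext} so as to normalize $E$. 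The delicate point is to secure all four requirements — tangency to $C$ and to $A$, and normalization of $E$ and of $D$ — simultaneously, while keeping control of the representative modulo $\tGamma(E)$; this forces the complements and connections to be chosen compatibly with the full flag $D|_C\cap TC\subset E\cap TC\subset E$ together with $D\subset TA$ and $E\cap TA|_C$, and it is precisely conditions \eqref{ctcrk}, \eqref{etark} and \eqref{prcond} that make such compatible choices available.
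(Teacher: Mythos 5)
Your proposal is correct and follows essentially the same route as the paper's own proof: reduce to degrees $0$ and $1$ via the generation statement, descend through $A/D$ (with the induced submanifold $C/(D|_C\cap TC)$ and projected bundle $pr_*(E\cap TA|_C)$) to build the degree-$0$ representative, and in degree $1$ apply Lemma~\ref{ext} twice (once on the quotient triple, once for the final extension to $M$), with conditions \eqref{ctcrk}, \eqref{etark}, \eqref{prcond} playing exactly the roles you assign them. Your explicit verification of the equality $\cN(\cI_{\cC})\cap\cE=\cN(\cI_{\cC})\cap\cN(\cI_{\cA})$ is a small addition the paper leaves implicit, and your closing acknowledgment of the delicate compatible choices in the degree-$1$ extension matches what the paper itself glosses over by invoking Lemma~\ref{ext}.
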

\begin{remark}\label{moti}
We explain why Lemma \ref{lift} is an algebraic version of Step (a) in the outline at the beginning of this Section; {to this aim, recall also Remark \ref{rem:charN}}. When $\bbC$ is smooth we have $C^{\infty}(\bbC)=\cE/(\cE \cap\cI_{\cC})$. 
Indeed $\cE$ consists exactly of
the functions on $\cM$ obtained as follows: start from a function on
$\bbC$, lift it to $\bar{\cC}$, extend it  to a function on 
$\bar{\cA}$ lying in the normalizer of $\cI_{ \bar{\cC}}$ (i.e., satisfying $\{F,\cI_{\bar{\cC}}\}_{\bar{\cA}}\subset \cI_{\bar{\cC}}$), then lift
to $\cA$ and extend in any way to $\cM$.
By Step (a) one expects
 a well-defined natural (bracket-preserving) map
$C^{\infty}(\un{\cC}) \rightarrow C^{\infty}(\bbC)$.
If $T\cA|_{\cC}\cap T\cC^{\omega}$ has constant rank (see Lemma \ref{lemmaconstrk}),
one expects further that every function on $\un{\cC}$ can be 
extended to a function on $\cM$ that annihilates both $T\cA$ and $T\cC^{\omega}$, 
i.e., by a function which lies both in $\cN(\cI_{\cC})$ and in $\cE$.
\end{remark}
\begin{proof}
We have to show that to  any $F\in \cN(\cI_{\cC})$ we can add some function in $\cI_{\cC}$ to obtain a function in $\cE\cap \cN(\cI_{\cC})$.
 We will make repeated use of eq. \eqref{Izero}, \eqref{Ione}, \eqref{NIzero}, \eqref{NIone}.

 To write out $\cE$ in classical terms notice that $\cN(\cI_{\cA})_1$ agrees with $\tGamma_{bas}(TA)$, the set of (extensions to $M$ of) vector fields on $A$ which are basic w.r.t. $pr \colon  A \rightarrow A/D$. We have
\begin{equation}
\cE_0=C_D^{\infty}(M)\cap  C_{(E\cap TA|_C)}^{\infty}(M),
\end{equation}
\begin{equation}
\cE_1=\{X\in \tGamma_{bas}(TA): [X,\bullet] \text{ preserves } \tGamma_{bas}(TA)\cap \tGamma(E\cap TA|_C)\}.
\end{equation} 
We have the following geometric set-up:  the projection $pr \colon A \rightarrow  A/ D=:\bar{A}$, and 
\[\xymatrix{  
E\cap TA|_C \ar[r]\ar[d] &pr_*(E\cap TA|_C)=:\bar{E} \ar[d]& \\
C \ar[r]  & C/(D|_C\cap TC)=:\bar{C} \ar[r] &  C/(E\cap TC)=\un{C} \\
}.
\]
Notice that $\bar{C}$ is well-defined by eq. \eqref{ctcrk}, and it can be regarded as a submanifold of $\bar{A}$ since we are working locally on $C$.
Also, the two vertical arrows are well-defined vector bundles by  \eqref{etark}  and \eqref{prcond}.
The (derivative of the) second quotient $\bar{C} \rightarrow \un{C}$ has  kernel $\bar{E}\cap T\bar{C}$.

To prove the lemma for degree $0$ functions take  $f\in \cN(\cI_{\cC})_0=C_E^{\infty}(M)$. The restriction $f|_C$ descends to a function on $\bar{C}$
annihilating $\bar{E}\cap T\bar{C}$. Extend  it to $A/D$ in such a way that annihilates $\bar{E}$.   Then pull back to a $D$-invariant function on $A$, and extend to a function on $M$ which annihilates $E$. This delivers an element of $\cE_0\cap \cN(\cI_{\cC})_0=C_D^{\infty}(M)\cap C_E^{\infty}(M)$ differs from $f$ by an element of $Z(C)=(\cI_{\cC})_0$.

To prove the lemma for degree $1$ functions take an element of $$\cN(\cI_{\cC})_1= \{X\in \tilde{\Gamma}(TC):[X,\tilde{\Gamma}(E)]\subset \tilde{\Gamma}(E)\}.$$ The restriction $X|_C$
descends to a vector field on $\un{C}$, which  we can lift  to a vector field $\bar{C}$. Lemma \ref{ext},
applied to the subbundle $\bar{E}\rightarrow \bar{C}\subset \bar{A}$, assures that we can further extend
to a vector field on $\bar{A}$ whose Lie derivative respects $\tGamma(\bar{E})$. Now   lift by $pr \colon A \rightarrow  A/ D=:\bar{A}$ to a vector field on $A$ tangent to $C$; its Lie derivative will automatically preserve $\tGamma(E\cap TA|_C)$. Then an extension to a vector field on  $M$  whose Lie derivative
preserves $\tGamma(E)$, which exists by Lemma \ref{ext},
gives an element $Y$ of $\cE_1\cap \cN(\cI_{\cC})_1$.
Since the projection $C \rightarrow  \un{C}$ maps $X|_C$ and $Y|_C$ to the same vector field on $\un{C}$, we conclude that $X-Y\in \tGamma(E)=(\cI_{\cC})_1$.

 Since $\cN(\cI_{\cC})/(\cN(\cI_{\cC}) \cap\cI_{\cC} )$ is generated by elements in degree $0$ and $1$ (see text after eq. \eqref{justy}) we are done.
\end{proof}

Now assume that $M$ is endowed with a Poisson structure $\pi$, corresponding to a function $\cS$ on $T^*[1]M$.

\begin{thm}\label{coisopre} Let $\cC$ be a presymplectic submanifold of $T^*[1]M$ whith smooth presymplectic quotient $\un{\cC}$.
Let $\cA$ be a graded coisotropic submanifold of $\cM$  containing
$\cC$ and assume  that eq. \eqref{ctcrk},
\eqref{etark}, and \eqref{prcond} hold. If
\begin{eqnarray}
 \label{SdescC} \cS&\in& \cN(\cI_{\cC})+\cI_{\cC},\\
 \label{SnormA}\{\cS,\cI_{\cA}\}&\subset& \cI_{\cC}\\
 \label{ShalfnormC}\{\cS, (\cI_{\cC}\cap\cN_{\cC}(\cI_{\cA}))_0\}&\subset&
 \cI_{\cC},
\end{eqnarray}
then ${\cS}$ descends to $\un{\cC}$  and
$\{\un{\cS},\un{\cS}\}=0$.

Here $\cN_{\cC}(\cI_{\cA})$ denotes the set of functions $F$ with $\{F,\cI_{\cA}\}\subset \cI_{\cC}$.
\end{thm}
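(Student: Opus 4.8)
=== BEGINNING OF PROOF PROPOSAL ===

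\textbf{Overview of the strategy.} The plan is to exploit the two-stage reduction picture sketched at the beginning of the Section, making rigorous the statement that $\un{\cC}$ and $\bbC$ carry the same self-commutation property for $\un{\cS}$. Concretely, I would first use \eqref{SdescC} (which is exactly eq. \eqref{eq:INI} applied to $\cS$, i.e. the condition of Lemma \ref{desc}) to conclude that $\cS$ descends to a well-defined function $\un{\cS}$ on $\un{\cC}=T^*[1]\un{C}$. The substance of the theorem is then that $\{\un{\cS},\un{\cS}\}=0$, and the idea is \emph{not} to verify this directly on $\un{\cC}$ but to transfer the computation to $\bbC$, where Lemma \ref{halfnorm} becomes applicable after the coisotropic reduction by $\cA$.

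\textbf{Key steps in order.} First I would perform the reduction of $\cS$ along the coisotropic submanifold $\cA$. Since $\cA$ is coisotropic, $\cI_{\cA}$ is closed under the bracket, so $\cN(\cI_{\cA})/\cI_{\cA}$ is a graded Poisson algebra; condition \eqref{SnormA}, which says $\{\cS,\cI_{\cA}\}\subset\cI_{\cC}\subset$ (here one must be careful — see below), should be read as $\cS\in\cN(\cI_{\cA})+\cI_{\cA}$ so that $\cS$ descends to a self-commuting function $\bar{\cS}$ on $\bar{\cA}=\cA/T\cA^\omega$; self-commutation of $\bar{\cS}$ is automatic because the bracket on $\cN(\cI_{\cA})/\cI_{\cA}$ is computed by lifting and applying the ambient bracket, and $\{\cS,\cS\}=0$. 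This realizes condition (b2) of the outline. Next I would form $\bar{\cC}$, the image of $\cC$ in $\bar{\cA}$, which is presymplectic by Lemma \ref{lemmaconstrk} (using \eqref{ctcrk},\eqref{etark},\eqref{prcond}), and take its presymplectic quotient $\bbC$; by the footnote in the outline, $\bbC$ is symplectomorphic (at least locally, or via a symplectic map globally) to $\un{\cC}$, and under this identification $\bar{\cS}$ descends to the same function $\bbS=\un{\cS}$. Then condition \eqref{ShalfnormC}, which expresses that $\bar{\cS}$ satisfies the degree-$0$ normalizer condition of Lemma \ref{halfnorm} relative to $\bar{\cC}$ (i.e. $\{\bar{\cS},(\cI_{\bar{\cC}})_0\}\subset(\cI_{\bar{\cC}})_1$), lets me invoke Lemma \ref{halfnorm} to conclude $\{\bbS,\bbS\}=0$. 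Transporting back through the symplectomorphism $\bbC\cong\un{\cC}$ gives $\{\un{\cS},\un{\cS}\}=0$.

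\textbf{Translating the hypotheses and the role of Lemma \ref{lift}.} The technical heart is checking that the three algebraic hypotheses \eqref{SdescC}--\eqref{ShalfnormC} really encode the three geometric assumptions (b1), (b2), (b2$'$) of the outline, phrased on $\un{\cC}$, $\bar{\cA}$, and $\bbC$ respectively, rather than on the unreduced objects. This is where Lemma \ref{lift} enters: it guarantees that classes in $\cN(\cI_{\cC})/(\cN(\cI_{\cC})\cap\cI_{\cC})$ (i.e. functions on $\un{\cC}$) can be represented by functions in $\cN(\cI_{\cC})\cap\cN(\cI_{\cA})=\cN(\cI_{\cC})\cap\cE$, so that the derived-bracket computation of $\{\un{\cS},\un{\cS}\}$ on $\un{\cC}$ can be carried out using representatives that are simultaneously $\cA$-normalizing. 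With such representatives $\hat f,\hat g,\hat h\in\cN(\cI_{\cC})\cap\cN(\cI_{\cA})$, I would imitate the Jacobiator computation in the proof of Lemma \ref{halfnorm}: compute the cyclic sum of $\{\{\un{f},\un{g}\},\un{h}\}$, lift to ambient brackets, and show that the correction terms arising from replacing $\cS$ by an $\cA$-reduced extension lie in $\cI_{\cC}$. Hypothesis \eqref{ShalfnormC} is precisely what kills the analogue of the term $\{\{\cS,k\},\hat h\}$ in Lemma \ref{halfnorm}, now with $k$ ranging over $(\cI_{\cC}\cap\cN_{\cC}(\cI_{\cA}))_0$ rather than all of $(\cI_{\cC})_0$ — this is why the condition is genuinely weaker than that of Lemma \ref{halfnorm}.

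\textbf{Main obstacle.} The hardest part will be the bookkeeping that makes the two-stage identification $\bbC\cong\un{\cC}$ compatible with the descended functions and their brackets, in particular verifying that the weakened degree-$0$ condition \eqref{ShalfnormC}, which is stated relative to $\cN_{\cC}(\cI_{\cA})$, is exactly the pullback along $\cA$-reduction of the hypothesis of Lemma \ref{halfnorm} for $\bbC$. One must track how $(\cI_{\cC})_0$ for $\bar{\cC}$ corresponds to $(\cI_{\cC}\cap\cN_{\cC}(\cI_{\cA}))_0$ upstairs, and confirm that no additional terms survive modulo $\cI_{\cC}$ after the derived-bracket manipulations. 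I expect that, once the correct representatives from Lemma \ref{lift} are fixed, the remaining algebra parallels the proof of Lemma \ref{halfnorm} closely, with \eqref{SnormA} ensuring that the extra bracket with $\cA$-constraints stays inside $\cI_{\cC}$ and \eqref{ShalfnormC} providing the final cancellation.

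=== END OF PROOF PROPOSAL ===
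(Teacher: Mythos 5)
The computational core of your proposal (the third and fourth paragraphs) is essentially the paper's own proof: fix, via Lemma \ref{lift}, representatives $\hat{f},\hat{g},\hat{h},\hat{\cS}\in\cN(\cI_{\cC})\cap\cN(\cI_{\cA})$; rerun the Jacobiator computation of Lemma \ref{halfnorm} to isolate the correction term $\{\{\cS,k\},\hat{h}\}$ with $k=\{\{\hat{\cS}-\cS,\hat{f}\},\hat{g}\}\in(\cI_{\cC})_0$; use \eqref{SnormA} to place $k$ inside $\cN_{\cC}(\cI_{\cA})$; and then kill $\{\cS,k\}$ modulo $\cI_{\cC}$ by \eqref{ShalfnormC}. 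This is exactly the division of labor the paper uses, so on that score your plan is sound.

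However, the geometric skeleton in your ``Key steps in order'' paragraph contains a genuine error, and it is exactly the point you flagged with ``one must be careful.'' Condition \eqref{SnormA} reads $\{\cS,\cI_{\cA}\}\subset\cI_{\cC}$, not $\{\cS,\cI_{\cA}\}\subset\cI_{\cA}$; since $\cI_{\cA}\subsetneq\cI_{\cC}$ in general, it does \emph{not} imply $\cS\in\cN(\cI_{\cA})+\cI_{\cA}$, and $\cS$ need not descend to $\bar{\cA}$ at all. The remark immediately following the theorem in the paper makes precisely this point: the theorem is strictly more general than the two-stage geometric outline because condition (b2) of the outline is weakened to a condition ``on $\cC$ only.'' Consequently the route ``descend to $\bar{\cA}$, then to $\bbC$, then apply Lemma \ref{halfnorm} downstairs'' is unavailable under the stated hypotheses, and your upstairs computation is not a verification that \eqref{SdescC}--\eqref{ShalfnormC} ``encode'' (b1), (b2), (b2$'$) --- they are strictly weaker --- but is itself the entire proof. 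Relatedly, the one substantive step your sketch asserts without argument is that the specific $k$ arising in the computation lies in $\cN_{\cC}(\cI_{\cA})$, so that \eqref{ShalfnormC} applies to it; this is the heart of the paper's proof, not bookkeeping. One writes $k=\{\{\hat{\cS},\hat{f}\},\hat{g}\}-\{\{\cS,\hat{f}\},\hat{g}\}$, observes that the first term lies in $\cN(\cI_{\cA})\subset\cN_{\cC}(\cI_{\cA})$ because $\cN(\cI_{\cA})$ is a Poisson subalgebra, and handles the second by two applications of the Jacobi identity, each using \eqref{SnormA} together with $\hat{f},\hat{g}\in\cN(\cI_{\cA})\cap\cN(\cI_{\cC})$. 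With that step supplied, and the two-stage descent through $\bar{\cA}$ excised (or downgraded to motivation), your argument coincides with the paper's.
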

\begin{remark}
Thm. \ref{coisopre} is slightly more general than the geometric construction outlined at the beginning of this section: since we are ultimately  interested in a
quotient of $\cC$, the requirement that $\cS|_{\cA}$ is constant
along every leaf of the characteristic distribution of $\bar{\cA}$
(condition  (b2) in the outline) is weakened to a condition on $\cC$ only (eq. \eqref{SnormA}).
\end{remark}

\begin{proof} 
Let $\un{f},\un{g},\un{h}$ functions on $\un{C}$ (which is smooth by Prop. \ref{pres2}), and fix extensions $\hat{f},\hat{g},\hat{h}$
 to functions in $\cN(\cI_{\cC})_0$.
Fix a choice of function
$\hat{\cS}\in \cN(\cI_{\cC})$ with $\hat{\cS}-\cS\in \cI_{\cC}$ (it exists by eq. \eqref{SdescC}). The proof of  Lemma \ref{halfnorm} shows that
\begin{equation}\label{keyeq}
\{\{\un{f},\un{g}\}_{\un{C}},\un{h}\}_{\un{C}}=\{\{\hat{f},\hat{g}\}_M, \hat{h}\}_M+ \{\{\cS,k\},\hat{h}\} \text{ mod } \cI_{\cC}
\end{equation}
where $k\in (\cI_{\cC})_0$ is explicitly given as
$$k=\{\{\hat{\cS}-\cS,\hat{f}\},\hat{g}\}.$$

Lemma \ref{lift}  allows us to refine the choice of extensions $\hat{\cS},\hat{f},\hat{g},\hat{h}$ so that they lie in $\cN(\cI_{\cC})\cap \cN(\cI_{\cA})$. In that case we claim that $k$ lies not only in $\cI_{\cC}$ but also in $\cN_{\cC}(\cI_{\cA})$. The proof goes as follows.
Since $\cN(\cI_{\cA})$ is a Poisson subalgebra,
$\{\{\hat{\cS},\hat{f}\},\hat{g}\}$ lies in $\cN(\cI_{\cA})\subset \cN_{\cC}(\cI_{\cA})$. Further
we have
$$\{\{\cS,{\hat{f}}\},\cI_{\cA}\}=\{\cS,\{\hat{f},\cI_{\cA}\}\}-
\{\hat{f},\{\cS,\cI_{\cA}\}\}\subset \{\cS,\cI_{\cA}\}+
\{{\hat{f}},\cI_{\cC}\}\subset \cI_{\cC},$$ where we used $\hat{f}\in
\cN(\cI_{\cA})\cap \cN(\cI_{\cC})$ and twice the fact that
$\cS \in \cN_{\cC}(\cI_{\cA})$ (which is just eq. \eqref{SnormA}).
Hence $\{\cS,\hat{f}\}\in\cN_{\cC}(\cI_{\cA})$, and taking a further bracket with $\hat{g}$
we conclude that $\{\{\cS,\hat{f}\},\hat{g}\}$ lies in $\cN_{\cC}(\cI_{\cA})$. Altogether this shows that
$k \in \cN_{\cC}(\cI_{\cA})$, as claimed.

From eq. \eqref{ShalfnormC}  it hence follows that $\{\cS,k\}\in
\cI_{\cC}$. Together with $\hat{h}\in \cN(\cI_{\cC})$ this implies that the whole second term of eq. \eqref{keyeq} lies in $\cI_{\cC}$. Taking
  the cyclic sum shows that  $\{\bullet,\bullet\}_{\un{C}}$  satisfies the Jacobi identity.
\end{proof}

Now we translate into classical terms Thm. \ref{coisopre}.
Until the end of this Section $(M,\pi)$ is a Poisson manifold, $C$ a submanifold, and
$E\subset TM|_C$ a subbundle such that $F:=E\cap TC$ is an involutive
constant rank distribution. Assume that $\un{C}:=C/(E\cap TC)$ is
smooth.
\begin{thm}\label{A1}
Let $A$ be a submanifold of $M$ containing $C$ and  $D$ an
 integrable distribution on $A$ such that
$D|_C\subset E$.
Assume eq. \eqref{ctcrk},
\eqref{etark}, and \eqref{prcond}, that means,
\begin{eqnarray*}
&&D|_C \cap TC \text{ has constant rank}\\
&&E\cap TA|_C \text{ has constant rank}\\ 
&& \text{the flows of vector fields }Y\in \Gamma(D)\subset \chi(A) \text{ with }Y|_C\in \chi(C)
\text{ preserves }E\cap TA|_C.
\end{eqnarray*}
 Assume that   the Poisson structure on $M$ satisfies
\begin{eqnarray}
\label{EEF} \{C^{\infty}_E(M),C^{\infty}_E(M)\}_M &\subset& C^{\infty}_{F}(M)\\
\label{frameD} (\cL_{X_i}\pi)|_C &\subset& E\wedge TM|_C\\
\label{ETCD}\sharp E^{\circ}&\subset& TC+D|_C
\end{eqnarray}
where the $X_i$ are obtained as follows: for any $p\in C$ make a choice of frame for $D|_C$ defined near $p$ and a choice of  extension to elements $X_i$ of $\tGamma(D)$.

Then $\un{C}$ is a Poisson manifold.
\end{thm}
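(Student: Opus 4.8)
The plan is to show that Theorem~\ref{A1} is simply the classical translation of Theorem~\ref{coisopre}, so the strategy is to set up the graded objects $\cC=E^{\circ}[1]$ and $\cA=D^{\circ}[1]$ inside $\cM=T^*[1]M$ and verify that the three hypotheses \eqref{EEF}, \eqref{frameD}, \eqref{ETCD} are exactly the classical avatars of the three algebraic conditions \eqref{SdescC}, \eqref{SnormA}, \eqref{ShalfnormC}. First I would record that since $F=E\cap TC$ is involutive of constant rank and $\un{C}$ is smooth, Prop.~\ref{pres1} and Prop.~\ref{pres2} make $\cC$ a presymplectic submanifold with smooth presymplectic quotient $\un{\cC}\cong T^*[1]\un{C}$; and since $D$ is integrable on $A$, Prop.~\ref{coisocase} makes $\cA$ coisotropic. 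The geometric containments $C\subset A$ and $D|_C\subset E$ give $\cC\subset\cA$, and the three constant-rank/flow hypotheses are verbatim conditions~\eqref{ctcrk}--\eqref{prcond}, so Lemma~\ref{lemmaconstrk} applies and all the prerequisites of Thm.~\ref{coisopre} are in place.

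The core of the proof is then the dictionary between the algebraic and classical hypotheses, worked out degree by degree using the identities $\{\cS,f\}=\sharp df$ for $f\in C^{\infty}(M)$ and $\{\cS,X\}=-\cL_X\pi$ for a vector field $X$, together with \eqref{Xfg}. I would argue as follows. Condition \eqref{SdescC}, that $\cS\in\cN(\cI_{\cC})+\cI_{\cC}$, is precisely the statement ``$\cS$ descends'' analyzed in Lemma~\ref{desc}, which is equivalent to the pair $\sharp E^{\circ}\subset E+TC$ and \eqref{condi2}; note \eqref{condi2} is literally hypothesis \eqref{EEF}, and $\sharp E^{\circ}\subset E+TC$ follows from the stronger-looking \eqref{ETCD} since $D|_C\subset E$. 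Condition \eqref{SnormA}, $\{\cS,\cI_{\cA}\}\subset\cI_{\cC}$, splits by degree: $\{\cS,(\cI_{\cA})_0\}\subset(\cI_{\cC})_1$ reads $\sharp N^*A\subset E$ on $C$, which since $N^*A\subset E^{\circ}$ and $D|_C\subset E$ is again covered by \eqref{ETCD} combined with $E^{\circ}\supset D^{\circ}=N^*A$; and $\{\cS,(\cI_{\cA})_1\}\subset(\cI_{\cC})_2$ reads $(\cL_{Y}\pi)|_C\in\Gamma(E\wedge TM|_C)$ for $Y\in\tGamma(D)$, which is exactly hypothesis \eqref{frameD} once one checks, as in Lemma~\ref{desc}, that it suffices to test on a frame $X_i$ of $D|_C$.

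Finally, condition \eqref{ShalfnormC}, namely $\{\cS,(\cI_{\cC}\cap\cN_{\cC}(\cI_{\cA}))_0\}\subset\cI_{\cC}$, must be extracted in degree $0$: such functions are $f\in Z(C)$ with $df|_C\in E^{\circ}$ (so $f\in(\cN(\cI_{\cC})\cap\cI_{\cC})_0$) subject to the extra normalizer-relative-to-$\cA$ constraint, and $\{\cS,f\}=\sharp df$ must lie in $(\cI_{\cC})_1=\tGamma(E)$, i.e.\ $\sharp df|_C\in E$. I expect the main obstacle to be precisely this last translation: one must verify that $\sharp(df)\in E$ for all relevant $f$ is guaranteed by the hypotheses at hand rather than needing to be imposed separately, and the key point will be that $df|_C$ ranges (modulo $N^*C$) over the annihilator of $D|_C+TC$ inside $E^{\circ}$, so that \eqref{ETCD} in the refined form $\sharp E^{\circ}\subset TC+D|_C$ forces $\sharp df$ into $E$ after accounting for the $\cN_{\cC}(\cI_{\cA})$ restriction. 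Once the three correspondences are established, Thm.~\ref{coisopre} yields that $\cS$ descends to $\un{\cC}$ with $\{\un{\cS},\un{\cS}\}=0$, and by Prop.~\ref{dimitri} and Prop.~\ref{pres2} this self-commuting degree~$2$ function on $T^*[1]\un{C}$ is exactly a Poisson bivector on $\un{C}$, completing the proof.
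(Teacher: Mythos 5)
Your overall strategy is exactly the paper's: realize Thm.~\ref{A1} as the classical translation of Thm.~\ref{coisopre} applied to $\cC=E^{\circ}[1]\subset\cA=D^{\circ}[1]$, and check that \eqref{EEF}, \eqref{frameD}, \eqref{ETCD} imply the three algebraic conditions \eqref{SdescC}, \eqref{SnormA}, \eqref{ShalfnormC}. Your treatment of \eqref{SdescC} is correct. However, the two places where you must deduce a statement about $\sharp$ on one annihilator from the hypothesis \eqref{ETCD}, which constrains $\sharp$ on $E^{\circ}$ --- namely the degree-zero part of \eqref{SnormA} and condition \eqref{ShalfnormC} --- both have genuine gaps, with a common cause: you never invoke the skew-symmetry of $\pi$, i.e.\ the identity $\langle\eta,\sharp\xi\rangle=-\langle\xi,\sharp\eta\rangle$, which is the only mechanism that converts \eqref{ETCD} into statements about $\sharp N^*A|_C$ or $\sharp(TC+D|_C)^{\circ}$.

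Concretely: for \eqref{SnormA} in degree zero you correctly read the condition as $\sharp N^*A|_C\subset E$, but your justification rests on the false claims $N^*A\subset E^{\circ}$ (this would force $E\subset TA|_C$, which is not assumed) and $D^{\circ}=N^*A$ (false unless $D=TA$); moreover, even granting these, \eqref{ETCD} would only place $\sharp N^*A|_C$ inside $TC+D|_C$, not inside $E$. The correct route is either the paper's --- rewrite the condition as \eqref{NAE}, $\sharp E^{\circ}\subset TA|_C$, which follows at once from \eqref{ETCD} because $TC+D|_C\subset TA|_C$ --- or the dualization: for $\xi\in N^*A|_C$ and $\eta\in E^{\circ}$ one has $\langle\eta,\sharp\xi\rangle=-\langle\xi,\sharp\eta\rangle=0$, whence $\sharp\xi\in(E^{\circ})^{\circ}=E$. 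For \eqref{ShalfnormC} you misidentify the test functions: membership in $\cN_{\cC}(\cI_{\cA})$ only requires that brackets with $\cI_{\cA}$ (i.e.\ with $Z(A)$ and $\tGamma(D)$) land in $\cI_{\cC}$, hence only $df|_C\in(D|_C)^{\circ}$, not $df|_C\in E^{\circ}$; the relevant class is therefore $df|_C\in(TC+D|_C)^{\circ}$, which in general strictly contains the class you describe, so a verification on your class would be insufficient. And your closing step fails as written: applying \eqref{ETCD} to a covector in $E^{\circ}$ yields $\sharp df\in TC+D|_C$, which is not contained in $E$. Skew-symmetry again repairs this and in fact gives the equivalence the paper asserts: for $\xi\in(TC+D|_C)^{\circ}$ and $\eta\in E^{\circ}$, $\langle\eta,\sharp\xi\rangle=-\langle\xi,\sharp\eta\rangle=0$ by \eqref{ETCD}, so $\sharp\xi\in E$, which is exactly \eqref{ShalfnormC}. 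A minor further point: the reduction of \eqref{LieD} to the frame condition \eqref{frameD} is not contained in Lemma~\ref{desc}; it needs the observation that any element of $\tGamma(D)$ is locally a $C^{\infty}(M)$-combination of the $X_i$ plus a vector field vanishing on $A$, together with \eqref{NAE} to dispose of the latter term.
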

\begin{proof}
As usual let $\cC=E^{\circ}[1]$, which is presymplectic with smooth presymplectic quotient by Prop. \ref{pres1} and \ref{pres2}.
We translate  in  terms of tensors on $M$ the three conditions on the function $\cS$ (given by the Poisson bivector $\pi$)
appearing in Thm \ref{coisopre}.
 The translation
 of condition \eqref{SdescC} is given in Lemma \ref{desc}:
 it corresponds to  \eqref{EEF} and $\sharp
E^{\circ}\subset TC+E$.
 Condition \eqref{SnormA}
 is equivalent to
\begin{eqnarray}
\label{NAE}\sharp E^{\circ}&\subset& TA|_C\\
\label{LieD} (\cL_{\tilde{\Gamma}(D)}\pi)|_C &\subset& E\wedge TM|_C.
\end{eqnarray}
 Condition \eqref{ShalfnormC} is equivalent
to \eqref{ETCD}.
We can forget \eqref{NAE} and $\sharp E^{\circ}\subset TC+E$ because
they are implied by \eqref{ETCD}.  Further
\eqref{LieD} can be replaced by \eqref{frameD}, i.e., it is enough to consider frames for $D|_C$. To see this notice that
any element of $\tGamma(D)$ is locally a $C^{\infty}(M)$-linear combination of the $X_i$ above and a vector field vanishing on $A$, and use \eqref{NAE}.

Hence all the assumptions of Thm \ref{coisopre} are satisfied, so
$\cS$ descends to a function on $\un{\cC}$ which commutes with
itself. Since by Prop. \ref{pres2} $\un{\cC}$ is symplectomorphic to
$T^*[1]\un{C}$, we obtain a Poisson structure on $\un{C}$.
\end{proof}
\begin{remark}\label{Liefun}
In Thm. \ref{A1} we can replace condition \eqref{frameD} by
$$\{C^{\infty}(M)_E \cap C^{\infty}(M)_D, C^{\infty}(M)_E \cap
C^{\infty}(M)_D\}\subset   C^{\infty}(M)_{D|_C},$$ as can be shown using
\eqref{ETCD}.
\end{remark}

\begin{remark}\label{replace}
We give a geometric description of $\bar{\cC}$,  the quotient  of $\cC$ by $T\cC \cap T\cA^{\omega}$. Recall from Rem. \ref{remC} (applied to $\cA$) that $\bar{\cA}$ is constructed as follows: divide the fibers of $D^{\circ}\rightarrow A$ to obtain $D^{\circ}/TA^{\circ}\rightarrow A$, which is endowed with
the flat $D$-connection $\nabla$;
 then identify fibers using $\nabla$. $E^{\circ}$ sits inside $D^{\circ}$, and after quotienting the fibers we obtain the subbundle $(E\cap TA)^{\circ}/TA^{\circ}$ of
$D^{\circ}/TA^{\circ}$.
Condition \eqref{prcond} is equivalent to $\nabla$ preserving this subbundle, and  identifying its fibers via $\nabla$ we obtain $\bar{\cC}$.
\end{remark}

A convenient application of  Thm. \ref{A1} is obtained
asking that $F\subset D|_C$ and that the submanifold $A$ be minimal (i.e.,
$TA|_C=TC+D|_C$). (One can show that these two requirements are equivalent to  $T\cC
+ T\cA^{\omega}= T\cA$, which in turns means that, locally, $\bar{\cC}=\bar{\cA}$.)

 \begin{thm}\label{A2}
Let $D|_C$ be a subbundle of $TM|_C$ with $F\subset D|_C\subset E$
and
\begin{equation}\label{ETCDdois}
\sharp E^{\circ}\subset TC+D|_C.
\end{equation}

Let $A$ be a submanifold containing $C$ such that $TA|_C=TC+D|_C$, and
 assume that  $D|_C$ can be
extended to an integrable distribution $D$ on $A$ such that
\begin{equation}
\label{Liedercon}
(\cL_{X_i}\pi)|_C \subset E\wedge TM|_C
\end{equation}
where the $X_i$ are obtained as follows: for any $p\in C$ make a choice of frame for $D|_C$ defined near $p$ and a choice of  extension to elements $X_i$ of $\tGamma(D)$.

Then $\un{C}$ is a Poisson manifold.
\end{thm}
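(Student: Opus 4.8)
The plan is to derive Thm.~\ref{A2} as a special case of Thm.~\ref{A1}, by verifying that the hypotheses of the latter are all satisfied under the (weaker-looking) assumptions here. The statement of Thm.~\ref{A2} provides $D|_C$ with $F\subset D|_C\subset E$, the key inclusion \eqref{ETCDdois} (which is literally \eqref{ETCD}), a submanifold $A$ with $TA|_C=TC+D|_C$ carrying an integrable distribution $D$ extending $D|_C$, and the Lie-derivative condition \eqref{Liedercon} (which is literally \eqref{frameD}). So to invoke Thm.~\ref{A1} I must check the three constant-rank/projectability conditions \eqref{ctcrk},\eqref{etark},\eqref{prcond} and the remaining Poisson condition \eqref{EEF}.

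First I would check the geometric conditions. For \eqref{ctcrk}, since $D|_C\subset E$ and $F=E\cap TC$ has constant rank, I compute $D|_C\cap TC$; using $F\subset D|_C$ one gets $D|_C\cap TC=F$ (because $D|_C\cap TC\subset E\cap TC=F$ and conversely $F\subset D|_C$ and $F\subset TC$), which has constant rank. For \eqref{etark}, the minimality condition $TA|_C=TC+D|_C$ forces $E\cap TA|_C=E\cap(TC+D|_C)$; since $D|_C\subset E$, this equals $(E\cap TC)+D|_C=F+D|_C=D|_C$, which has constant rank. This is the crux of why minimality simplifies everything: $E\cap TA|_C$ collapses to $D|_C$. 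For \eqref{prcond}, the flows of the relevant $Y\in\Gamma(D)$ preserve $D$ (as $D$ is integrable on $A$), and we just saw $E\cap TA|_C=D|_C$, so preservation of $E\cap TA|_C$ reduces to preservation of $D|_C$, which holds.

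Next I would supply the missing Poisson condition \eqref{EEF}, namely $\{C^\infty_E(M),C^\infty_E(M)\}_M\subset C^\infty_F(M)$, which is \emph{not} among the explicit hypotheses of Thm.~\ref{A2}. The natural move is to derive it from \eqref{ETCDdois} together with \eqref{Liedercon}. Here I would invoke the computation behind Lemma~\ref{desc}: applying identity \eqref{Xfg} to $f,g\in C^\infty_E(M)$, the last two terms $\pi(d(Xf),dg)+\pi(df,d(Xg))$ vanish on $C$ precisely when $\sharp E^\circ\subset E+TC$, which follows from \eqref{ETCDdois} since $D|_C\subset E$. This reduces $X\{f,g\}|_C$ to $(\cL_X\pi)(df,dg)|_C$; taking $X$ to range over (a frame of) $F$ and using \eqref{Liedercon} applied along the $X_i$ spanning $D|_C\supset F$ should give that $\{f,g\}|_C$ is $F$-invariant, i.e.\ $\{f,g\}_M\in C^\infty_F(M)$. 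This step is where I expect the real work to live, and it is the main obstacle: one must carefully relate the frame condition \eqref{Liedercon} on $D|_C$ to $F$-invariance of the bracket, presumably exactly as in Remark~\ref{Liefun}, which already asserts that \eqref{frameD} can be traded (using \eqref{ETCD}) for a bracket condition.

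Finally, having verified \eqref{ctcrk}, \eqref{etark}, \eqref{prcond}, \eqref{EEF}, \eqref{frameD}, and \eqref{ETCD}, all the hypotheses of Thm.~\ref{A1} hold, so that theorem yields a Poisson structure on $\un{C}$, completing the proof. I would keep the argument short by explicitly quoting \eqref{ETCDdois}$=$\eqref{ETCD} and \eqref{Liedercon}$=$\eqref{frameD} and spending the body only on the collapse $E\cap TA|_C=D|_C$ and on extracting \eqref{EEF}; the rest is a citation of Thm.~\ref{A1}.
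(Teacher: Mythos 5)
Your proposal is correct and takes essentially the same route as the paper's own proof, which likewise reduces Thm.~\ref{A2} to Thm.~\ref{A1} via the identities $D|_C\cap TC=E\cap TC=F$ and $E\cap TA|_C=D|_C$ (giving \eqref{ctcrk}, \eqref{etark}, \eqref{prcond}) and then derives \eqref{EEF} from \eqref{Liedercon} and \eqref{ETCDdois} using Remark~\ref{Liefun} and $F\subset D|_C$. The only detail worth making explicit in your \eqref{EEF} step is that the vanishing on $C$ of the cross terms $\pi(d(Xf),dg)+\pi(df,d(Xg))$ requires, besides $\sharp E^{\circ}\subset TC+D|_C$, that $[X_j,X]|_C\subset E$ for the extensions $X$ built from the frame $X_i$; this holds here because $D$ is involutive on $A$ and $D|_C\subset E$, so your sketch does close up.
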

\begin{proof}
We  check the assumptions of
Thm. \ref{A1}. We have $D|_C\cap TC=E\cap TC$ and $E\cap TA|_C=D|_C$, so conditions \eqref{ctcrk},\eqref{etark} and \eqref{prcond} are satisfied.
Condition \eqref{EEF} follows from eq. \eqref{frameD}. To see this use  Remark
\ref{Liefun},   $F\subset D|_C$, and the fact that  the bracket on $\un{C}$ is independent
of the choice of extension in $C^{\infty}_E(M)$ due to eq. \eqref{ETCDdois}. Since we are assuming conditions \eqref{frameD} and \eqref{ETCD}, we are done.
\end{proof}

\begin{remark}
The graded geometric approach gave a systematic way to derive and prove the statement of Thm. \ref{A2}.
A derivation without graded geometry seems hard. However once the statement is known there is a 
quick direct proof of Thm. \ref{A2} using algebraic arguments, which we wrote up in 
the Appendix of \cite{CZbilbao} 
\end{remark}

We conclude this Section by comparing Thm. \ref{A2} with the results of Falceto and the second author \cite{FZ}.
When $\sharp E^{\circ}\subset TC$ we can choose $D|_C=F$, so that
$A=C$. In this case Thm. \ref{A2} recovers\footnote{Use Remark \ref{Liefun} to see this.} exactly Prop. \ref{prescase}, which is also Prop. 4.1 of \cite{FZ},
and which is a mild extension of the Marsden--Ratiu  theorem \cite{MR}. 
 This also
shows that the submanifold $A$ in general does not have any Poisson
structure induced from $M$.

Further, by the ``minimality'' of $A$,  Thm. \ref{A2} improves Prop. 4.2 of \cite{FZ} because it weakens its assumptions .  Firstly, Prop. 4.2 of \cite{FZ} requires to extend $D|_C$ to an integrable distribution on the whole of $M$ (as opposed to just $A$). Secondly,  a technical assumption in Prop. 4.2 of \cite{FZ} -- on the compatibility between $E$ and the foliation defined by $D$ -- is now removed in Thm. \ref{A2}.

Several examples for  Thm. \ref{A2} have been discussed in \cite{FZ}. Here we limit ourselves to an example which, although trivial,    displays how Thm. \ref{A2} improves Prop. 4.2 of \cite{FZ}.
 
\begin{ep}
Let $(M,\pi)=(\RR^4, \sum_i \frac{\partial}{\partial x_i}\wedge
\frac{\partial}{\partial y_i})$ and $C=\{(x_1,y_1,0,0):x_1,y_1\in \RR\}$.  Let $E={\rm span}\{\frac{\partial}{\partial x_1},\frac{\partial}{\partial x_2},
\frac{\partial}{\partial y_2}+ x_1\frac{\partial}{\partial
y_1}\}$.  We have $
{\rm span} (\frac{\partial}{\partial x_1}- x_1\frac{\partial}{\partial
x_2})=
\sharp E^{\circ}\subset
TC+D|_C$ for $D|_C={\rm span}\{\frac{\partial}{\partial x_1},\frac{\partial}{\partial x_2}\}$.
Extending $D|_C$ to the hyperplane $A=\{y_2=0\}$ by the same formula clearly give an integrable distribution on $A$, and condition \eqref{Liedercon} is satisfied because
$\cL_{\frac{\partial}{\partial x_1}}\pi=\cL_{\frac{\partial}{\partial x_2}}\pi=0$.
Hence    Thm. \ref{A1} can be applied. (Since $\un{C}$ is one dimensional, the induced Poisson structure is of course trivial).

However the obvious extension of $D|_C$ to a distribution $\theta$ on the whole of $M$ does not satisfy the assumptions of Prop. 4.2 of \cite{FZ}. Indeed it is not compatible with the subbundle $E$, in the sense that
the projection $M \rightarrow M/\theta$ does not map the subbundle $E\subset TM|_C$ to a well-defined subbundle of $T(M/\theta)|_{\un{C}}$.
\end{ep}

\section{Examples} \label{sec:exdistr}

We present three examples of the Poisson reduction procedure we established.
The starting data is a manifold $M$ endowed with a (almost) Poisson structure, a submanifold $C$, and a subbundle $E\subset TM|_C$ such that $E\cap TC$ is an involutive distribution. (Actually in all three examples we have $E\oplus TC=TM|_C$, which implies that  $\cC:=E^{\circ}[1]$ is a symplectic submanifold of  $T^*[1]M$.). We describe the reduced structure both in the classical and in the graded picture.

To this aim we state the following lemma, which allows to compute easily the reduced Poisson structure in graded terms:
\begin{lemma}\label{onC}
Let $\cC$ be a presymplectic submanifold and $\cS$ a degree $2$ function on  $T^*[1]M$, so that $\cS|_{\cC}$ descends to the quotient $\un{\cC}$. Denote by $\pi^{\un{C}}$ the corresponding bivector field  on $\un{C}:=C/(E\cap TC)$.
Let $\cS'$ be a function such that $\cS'|_{\cC}=\cS|_{\cC}$ and so that  
$\cS'$ corresponds to a bivector field $\pi'$ on $M$ with $\pi'|_C\in \Gamma(\wedge^2 TC)$.
Then, writing   $pr \colon C \rightarrow \un{C}$, we have $$\pi^{\un{C}}=pr_*(\pi'|_C).$$
\end{lemma}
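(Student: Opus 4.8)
The plan is to exploit the fact that the reduced bivector $\pi^{\un{C}}$ depends only on $\cS|_{\cC}$, together with the explicit formula for the reduced bracket furnished by Lemma \ref{desc}. Since by hypothesis $\cS'|_{\cC}=\cS|_{\cC}$, the function $\cS'$ descends to exactly the same function $\un{\cS}$ on $\un{\cC}$ as $\cS$ does, and hence induces the very same Poisson bivector $\pi^{\un{C}}$ on $\un{C}$. Thus I may freely replace $\cS$ by $\cS'$ throughout, and it suffices to show that the bivector on $\un{C}$ induced by $\cS'$ equals $pr_*(\pi'|_C)$.

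First I would invoke Lemma \ref{desc}: for $\un{f},\un{g}\in C^{\infty}(\un{C})$, choosing extensions $\hat f,\hat g\in C^{\infty}_E(M)$, the reduced bracket is $\{\un f,\un g\}_{\un C}=\bigl(\{\hat f,\hat g\}_M\bigr)|_C$, where the ambient bracket is now the one determined by $\pi'$. Writing this out, $\{\hat f,\hat g\}_M=\pi'(d\hat f,d\hat g)$, so I must evaluate $\pi'(d\hat f,d\hat g)$ along $C$. The key geometric point is the hypothesis $\pi'|_C\in\Gamma(\wedge^2 TC)$: because $\pi'|_C$ takes values in $\wedge^2 TC$, its pairing with $d\hat f,d\hat g$ sees only the restrictions $d(\hat f|_C),d(\hat g|_C)$, and only their components along $TC$. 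Consequently $\bigl(\pi'(d\hat f,d\hat g)\bigr)|_C$ depends only on the functions $\hat f|_C,\hat g|_C$, i.e.\ only on $\un f,\un g$ pulled back to $C$, confirming that the expression descends consistently.

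Next I would identify this restricted bracket with $pr_*(\pi'|_C)$. Since $pr\colon C\to\un C$ is a submersion with $\ker(dpr)=E\cap TC=F$, pushing forward $\pi'|_C$ is well defined precisely when $\pi'|_C$ is $F$-related to a bivector on $\un C$; the defining relation is $pr_*(\pi'|_C)(\un f,\un g)\circ pr=(\pi'|_C)(pr^*d\un f,pr^*d\un g)$. Comparing the two expressions, I note that for an extension $\hat f\in C^{\infty}_E(M)$ one has $d\hat f|_C\in E^{\circ}\subset F^{\circ}$, so $d\hat f|_C$ restricted to $TC$ is precisely the pullback $pr^*(d\un f)$ of the differential of the descended function. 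Feeding this into $\pi'|_C\in\Gamma(\wedge^2TC)$ shows that $\bigl(\pi'(d\hat f,d\hat g)\bigr)|_C$ agrees, as a function on $C$ constant along $F$, with $pr^*\bigl(pr_*(\pi'|_C)(\un f,\un g)\bigr)$. Reading off both sides then yields $\pi^{\un C}(\un f,\un g)=pr_*(\pi'|_C)(\un f,\un g)$ for all $\un f,\un g$, which is the claim.

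I expect the main obstacle to be the careful bookkeeping of differentials at points of $C$: one must verify that the two terms $\pi'(df,d(\hat g))$ and $\pi'(d\hat f,dg)$-type contributions do not spoil the identification, which is exactly where the hypothesis $\pi'|_C\in\Gamma(\wedge^2TC)$ (forcing $\sharp'(TC^{\circ})\subset$ the tangent directions, so that normal components of the differentials are annihilated) does the essential work. Beyond that, the argument is a matter of matching the pushforward formula with the derived-bracket formula of Lemma \ref{desc}, and is otherwise routine.
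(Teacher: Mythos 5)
Your proof is correct, but it identifies the reduced bivector by a different mechanism than the paper does. Both arguments begin identically: since $\cS'|_{\cC}=\cS|_{\cC}$, the two functions descend to the \emph{same} element $\un{\cS}$ of $C(T^*[1]\un{C})$, so one may work with $\cS'$ throughout. At that point the paper is done in one line: it invokes the concrete form of the isomorphism $\cN(\cI)/(\cN(\cI)\cap\cI)\cong C(T^*[1]\un{C})$ constructed at the end of the proof of Prop.~\ref{pres2} (restriction to $C$ followed by pushforward along $pr$, on degree $0$ and degree $1$ generators, extended multiplicatively), under which the class of $\cS'$ visibly corresponds to $pr_*(\pi'|_C)$ because $\pi'|_C\in\Gamma(\wedge^2 TC)$. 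You instead characterize the bivector corresponding to $\un{\cS'}$ through its bracket on functions, via the formula of Lemma~\ref{desc} (lift to $C^{\infty}_E(M)$, apply the bracket of $\pi'$, restrict to $C$), and then a pointwise linear-algebra step: since $\pi'|_C$ takes values in $\wedge^2 TC$, one has $\pi'(d\hat f,d\hat g)|_C=(\pi'|_C)(pr^*d\un f,pr^*d\un g)$, which is exactly the defining relation of $pr_*(\pi'|_C)$. This alternative is more computational but also more self-contained: it avoids unwinding how degree-$2$ classes transform under the isomorphism of Prop.~\ref{pres2}, and it produces the projectability of $\pi'|_C$ (i.e.\ that $pr_*(\pi'|_C)$ is well defined) as a byproduct, since the left-hand side of your identity is manifestly constant along the fibers of $pr$. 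One point you should state explicitly: Lemma~\ref{desc} is formulated for a Poisson tensor, whereas here neither $\cS$ nor $\cS'$ is assumed to self-commute; this is harmless because the proof of Lemma~\ref{desc} never uses $\{\cS,\cS\}=0$ (which is why the paper calls the induced bracket only ``almost-Poisson''), but applying that lemma to $\pi'$ verbatim requires this remark.
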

\begin{proof}
By the concrete form of the isomorphism $\cN(\cI)/(\cN(\cI)\cap \cI)\cong C^{\infty}(T^*[1]\un{C})$ (see
end of the proof of Prop. \ref{pres2}) it is clear that  the bivector field $pr_*(\pi'|_C)$ on $\un{C}$ corresponds to $\un{\cS'}$. Now just use $\un{\cS'}=\un{\cS}$.
\end{proof}


\begin{ep}\label{counterex}
Let $$(M,\pi)=(\RR^4,\partial_{x_1}\wedge \partial_{x_2}+\partial_{x_3} \wedge \partial_{x_4}),\;\;\;\;\; C=\{x_4=0\},\;\;\;\;\;E= \partial_{x_4}+\alpha \partial_{x_1}$$ for $\alpha=\alpha(x_1,x_2,x_3)\in C^{\infty}(C)$. This is Ex. 4.4 of \cite{FZ}, where the following is discussed: if one applies our Thm. \ref{A2} (or Prop. 4.2 of \cite{FZ}) with $A=M$ and as $D$ the extension of $E$ by translations in the $x_4$ direction, 
one sees that the induced bivector field on $C=\un{C}$ is Poisson provided  $\pd{x_1}\alpha=0$.

Here we present the corresponding graded picture. Denote by $\theta_i$ the degree $1$ coordinates on the fibers of $T^*[1]M$ conjugate to the $x_i$. The vanishing ideal $\cI_{\cC}$ of
$\cC=E^{\circ}[1]$   is generated by $x_4$ and $\theta_4+\alpha \theta_1$,
and $\cC$ is a symplectic submanifold of  $T^*[1]M$ since $\{x_4,\theta_4+\alpha \theta_1\}=-1$.
 We have
$\cS=\theta_1\theta_2+\theta_3\theta_4$.  Since  $\cS \notin \cN(\cI_{\cC})$, we perform reduction in stages to determine when $\cS|_{\cC}$ is a self-commuting function on $\un{\cC}=\cC$. Take the coisotropic submanifold $\cA$ with constraint $\theta_4+\alpha \theta_1$.
Notice that since $\cC$ is a symplectic submanifold and $\cC\subset \cA$, it follows that
$T\cC^{\omega}\cap T\cA|_{\cC}$ has constant rank, hence the same holds for $T\cC \cap T\cA|_{\cC}^{\omega}$. Further $\bar{\cA}=\bar{\cC}$. We check when $\cS \in \cN(\cI_{\cA})$: a computation shows $\{\cS,\theta_4+\alpha \theta_1\}=-[\pd{x_1}\alpha\cdot  \theta_2+\pd{x_3}\alpha \cdot \theta_4]\theta_1$, which lies in $\cI_{\cA}$ if{f} $\pd{x_1}\alpha=0$. In this case $\cS|_{\cA}$ descends to $\bar{\cA}=\bar{\cC}=\cC$ and commutes with itself there.

Since on $\cC$ we have $\theta_4=-\alpha \theta_1$,
a function with $\cS'|_{\cC}=\cS|_{\cC}$ which corresponds to a bivector field on $C$ 
is $\cS':=\theta_1(\theta_2+\alpha \theta_3)$, so by Lemma \ref{onC} we deduce that the induced Poisson bivector field on $C$ is $\pd{x_1}\wedge (\pd{x_2}+\alpha \pd{x_3})$.

We end this example displaying explicitly an extension of $\cS|_{\cC}$ to a function $\hat{\cS}$ lying
in $\cN(\cI_{\cC})$. To this aim we modify $\cS'$ to obtain an element
 $\hat{\cS}:=\cS'-x_4\{\cS',\theta_4+\alpha \theta_1\}$. Notice that the commutator
$\{\hat{\cS},\hat{\cS}\}$ lies in $\cI$ if{f}\footnote{Use $\{\cS',x_4\}=0$ and $x_4 \in \cI$ to see this.}  $\{\cS',\cS'\}=-2 (\partial_{x_1} \alpha) \theta_1 \theta_2 \theta_3$ does. Hence we see again that, unless $\alpha$ is independent of $x_1$,  $\cS$ descends to
a function on $\un{\cC}=\cC$ which does not commute with itself.
\end{ep}

We show that   every almost Poisson manifold may be
obtained by reduction from its cotangent bundle with canonical
symplectic structure.

\begin{ep}
Let $C$ be a manifold endowed with a
 bivector field $\alpha$ (not necessarily Poisson).
 Embed $C$ as the zero section in
$M:=T^*C$, which is endowed with the canonical symplectic structure $\omega_M$. Let
$$E=graph(-\frac{1}{2}\sharp_C)\subset TM|_C=TC\oplus T^*C$$ where  $\sharp_C: T^*C \rightarrow TC$ denotes contraction with $\alpha$. We claim that the bivector field  induced by the reduction of the triple
$(M,C,E)$
on  $\un{C}=C$ is again $\alpha$.

In classical terms we know that the induced bracket on $C$ is computed
extending functions on $C$ to elements of $C_E^{\infty}(M)$, taking their bracket with respect to the   $\omega_M$, and restricting to $C$ (see Prop. \ref{prescase}). This can be computed choosing base coordinates $x_i$ on $C$ and conjugated fiber coordinates $y_i$ on $M=T^*C$, so that $E$ is spanned by the $\pd{y_i}-\frac{1}{2}\sum_j \alpha_{ij}\pd{x_j}$ (where $\alpha=\frac{1}{2}\alpha_{ij}\pd{x_i}\wedge \pd{x_j}$). We extend the functions $x_j$ on $C$ to the functions
$\tilde{x}_j:=x_j+
\frac{1}{2}\sum_i \alpha_{ij}y_i \in C_E^{\infty}(M)$. Then $(\{\tilde{x}_j,\tilde{x}_{j'}\}_M)|_C=\alpha_{jj'}$ as claimed.

This can be seen more easily in the graded picture.
Take the  degree $1$ coordinates $Q_i$ and $P_i$   on the fibers of $T^*[1]M$, conjugated to $x_i$ and $y_i$.
The graded submanifold $\cC=E^{\circ}[1]$ of $T^*[1]M$ is given locally by the constraints $$ y_i=0 \;\;\;\;\;\;\;\;P_i-\frac{1}{2}\alpha^{ij}Q_j=0 \;\;\;\;\;\;\text{   for all }i.$$ The matrix obtained taking the Poisson bracket of these constraints in $T^*[1]M$
is   non-degenerate, so $\cC$ is a graded symplectic submanifold, canonically symplectomorphic to $T^*[1]C$.
 The canonical symplectic structure $\omega_M$ on $M=T^*C$ corresponds to the function $\cS=Q_iP_i$ on $T^*[1]M$, and $\cS|_{\cC}=\frac{1}{2}\alpha^{ij}Q_iQ_j$, so using Lemma \ref{onC} we see that it recovers the bivector field $\alpha$ on $C$ we started with.
\end{ep}

We show that if $\g$ is a Lie \emph{bi}algebra then the Poisson structure on its dual can be recovered from the Poisson structure on the dual of the Drinfeld double.
\begin{ep}
Let $\g$ be Lie bialgebra. Endow $\g\oplus \g^*$ with the Drinfeld double Lie algebra structure \cite{Lu}(which also contains cross-terms). Its dual  $M:=(\g\oplus \g^*)^*=\g^*\oplus \g$ is a linear Poisson manifold.
Let $$C:= \g^* \oplus \{0\} \subset \g^*\oplus \g.$$
Let $E_{(x,0)}=\{0\}\oplus \g \subset T_{(x,0)}M$ for all $x\in \g^*$.
The reduction  of the triple
$(M,C,E)$ recovers the natural Poisson\footnote{$C$ has a natural Poisson structure even thought it is not a Poisson submanifold of $M$ (it is just a coisotropic submanifold).}
structure on $\un{C}=C$, namely the one obtained as the dual of the  Lie algebra $\g$. Since the subbundle $E$ is transversal to $C$, this is seen simply extending linear functions on $C$ to linear functions on $M$ that annihilate $E$.

The assumptions of our Prop. \ref{prescase}, which is essentially the Marsden--Ratiu theorem \cite{MR}, are not satisfied, since contraction with the Poisson bivector of $M$ does not map $E^{\circ}$ into $TC$ 
(essentially due to the fact that the Drinfeld double bracket contains crossed terms).
Thm. \ref{A2} however applies, with $A=M$ and  $D$ equal to the extension of $E$ from $C$ to $M$ by translation\footnote{To see that the assumptions of Thm. \ref{A2} are satisfied use  Remark \ref{Liefun}.}. 

The graded picture is as follows. $\cC:=E^{\circ}[1]$ coincides with the canonical  copy of $T^*[1]C$ inside $T^*[1]M$, hence it is symplectic. If we choose a basis of $\g$ and the dual basis, we obtain coordinates $x_i$ on $\g^*$ and $y_i$ on $\g$, hence coordinates on $M$. Denote by $P_i$ and $Q_i$ respectively the conjugated degree $1$ coordinates on the fibers $T^*[1]M$. Then $\cC$ is given by setting to zero all $y$ and $Q$, and the degree $2$ function encoding the Poisson structure on $M$ is
$$\cS=\frac{1}{2}\left[c_{ij}^kx_kP^iP^j+d_{ij}^ky_kQ^iQ^j+(c_{ik}^jy_k-d_{jk}^ix_k)P^iQ^k \right].$$
The restriction to $\cC$ consists just of the first summand above, and recovers the natural Poisson structure on $C$.

We check explicitly that $\cS|_{\cC}$ commutes with itself.
One computes $\{S,y_l\}  \notin \cI_{\cC}$, so  $\cS \notin \cN(\cI_{\cC})$. On the other hand reduction in stages using the coisotropic submanifold  $\cA=D^{\circ}[1]=\{Q=0\}$ applies. Indeed $\cS \in \cN(\cI_{\cA})$ since
$\{\cS,Q_l\}=\frac{1}{2}[d_{ij}^lQ^iQ^j+c_{il}^jP^iQ^l],$ so restricting $\cS$ we obtain a function on $\bar{\cA}=\bar{\cC}=\cC$ with commutes with itself.
\end{ep}

\newpage

\part{Reduction of Poisson manifolds by group actions}

We saw in Prop. \ref{dimitri} that the data of a Poisson manifold
$(M,\pi)$ can be encoded in the degree $1$ symplectic manifold
$\cM:=T^*[1]M$ together with a degree $2$ function $\cS$ on $\cM$
satisfying $\{\cS,\cS\}=0$. When considering actions on $\cM$ one
wishes to take into account its  symplectic structure and the function
$\cS$, {and in particular the hamiltonian vector field $X_{\cS}=\{\cS,\cdot\}$ (which is nothing else than the Lichnerowicz differential that defines Poisson cohomology).}
Notice  the graded Lie algebra of vector
fields $\chi(\cM)$ is endowed with the differential $[X_{\cS},\cdot]$, 
making $\chi(\cM)$ into a differential graded Lie algebra (DGLA).
Hence it is natural to act infinitesimally on $\cM$ by a DGLA, rather
than just by a graded Lie algebra, and to ask that the action be
hamiltonian (in the sense of ordinary symplectic geometry).

We will consider such an action on $\cM$, construct its global and
Marsden--Weinstein quotients, and translate them into classical terms
(Sections \ref{actions}--\ref{sec:hamred}).
Further, we will see that there is an induced action on the symplectic
groupoid $\Gamma$  of $(M,\pi)$, we show that  interestingly it is an action
in the category of Lie groupoids, and construct its global and
Marsden--Weinstein quotients (Sections \ref{sec:groupGamma}--\ref{sec:cga}).
\section{Actions on $T^*[1]M$}\label{actions}

Let $M$ be a manifold, and consider the degree $1$ symplectic manifold $\cM:=T^*[1]M$. In this Section we introduce the infinitesimal actions on $\cM$ that we will study in the rest of the paper.

An {infinitesimal action} on $\cM$ is a morphism of graded Lie algebras into $\chi(\cM)$, the graded Lie algebra of vector fields on $\cM$. Since vector fields of degrees other than $-1$ and $0$ vanish on the body $M$, any graded Lie algebra with a locally free infinitesimal action on $\cM$ must be concentrated in degrees $-1$ and $0$. Hence we are lead to  consider a graded Lie algebra $(\h[1] \oplus \g,[\cdot,\cdot])$ , where $\h$ and $\g$ are usual (finite dimensional) vector spaces.
We suppose that it acts infinitesimally on   $T^*[1]M$ \emph{with moment map}.
This means that we have a   morphism of graded Lie algebras 
into the hamiltonian vector fields of $\cM$, i.e. one of the form

\begin{center}\fbox{
\begin{Beqnarray*}
 \psi \colon \h[1] \oplus \g &\rightarrow& {\chi}_{-1}(\cM)\oplus  {\chi}_0(\cM)\\
(w,v)&\mapsto& \;\;\;X_{{J_0}^*w}+ X_{{J_1}^*v}.
\end{Beqnarray*}}
\end{center}

The notation is chosen so that ${{J_0}^*w}\in C^{\infty}_0(\cM)$ and ${{J_1}^*v}\in C^{\infty}_1(\cM)$. 
We spell out the data we assumed. $\h[1] \oplus \g$ being a graded Lie algebra means that $\g$ is a (ordinary) Lie algebra and $\h$ a $\g$-module (where $v\in \g$ acts as $[v,\bullet]$).  ${J_0}^*$ is a linear map $\h \rightarrow C^{\infty}_0(\cM)=C^{\infty}(M)$, so it corresponds to a map $J_0 \colon M \rightarrow \h^*$, while 
${J_1}^*$ is a linear  map $\g \rightarrow C^{\infty}_1(\cM)=\chi(M)$. We do not spell out the requirement that 
$\psi$ be a morphism of graded Lie algebras, since it is implied by the requirement that 
 the moment map be a Poisson map.

\begin{lemma}\label{poismap}
The moment  map $({J_0},{J_1}): \cM \rightarrow    (\h[1] \oplus \g)^*[1]$
is a Poisson map  if{f}
\begin{itemize}
\item [1)]  ${J_1}^*$ is an infinitesimal action of $\g$ on $M$
 \item [2)]  ${J_0}:  M \rightarrow \h^*$ is $\g$-equivariant, i.e.,
${J_1}^*v({J_0}^*w)={J_0}^*([v,w])$ for all $v\in \g$ and $w\in \h$.
\end{itemize}
\end{lemma}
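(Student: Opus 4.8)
The plan is to unpack the statement that the moment map $(J_0, J_1)\colon \cM \to (\h[1]\oplus\g)^*[1]$ is Poisson into the two conditions, by translating ``Poisson map'' into the requirement that the pullback of linear coordinate functions respects brackets. The key observation is that $(\h[1]\oplus\g)^*[1]$ carries the Lie--Poisson structure dual to the graded Lie bracket on $\h[1]\oplus\g$, so its algebra of functions is generated by the linear functions $w\in\h$ (which pull back to $J_0^*w \in C_0(\cM)$, degree $0$) and $v\in\g$ (which pull back to $J_1^*v\in C_1(\cM)$, degree $1$). A map into a Lie--Poisson manifold is Poisson precisely when it intertwines the brackets of these generating linear functions; hence it suffices to check the three bracket relations $\{J_0^*w, J_0^*w'\}$, $\{J_0^*w, J_1^*v\}$ and $\{J_1^*v, J_1^*v'\}$ against the corresponding structure constants of $\h[1]\oplus\g$.

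First I would treat the degree $(1,1)$ relation $\{J_1^*v, J_1^*v'\}$. Since each $J_1^*v$ is a degree $1$ function on $\cM=T^*[1]M$, i.e.\ a vector field on $M$, its Schouten bracket with $J_1^*v'$ is the ordinary Lie bracket of vector fields. The Lie--Poisson condition demands this equal $J_1^*[v,v']$, which says exactly that $v\mapsto J_1^*v$ is a Lie algebra morphism $\g\to\chi(M)$, i.e.\ condition 1), an infinitesimal action of $\g$ on $M$. Next I would treat the mixed relation $\{J_0^*w, J_1^*v\}$: here $J_0^*w\in C^\infty(M)$ and $J_1^*v$ is a vector field, so the Schouten bracket is $-J_1^*v(J_0^*w)$ (the action of the vector field on the function). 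Matching this with the bracket $[w,v]$ in $\h[1]\oplus\g$, which lives in $\h$ because $\h$ is the $\g$-module, yields the equivariance relation $J_1^*v(J_0^*w) = J_0^*([v,w])$, which is condition 2). Finally the degree $(0,0)$ relation $\{J_0^*w, J_0^*w'\}$ vanishes automatically for degree reasons (the Schouten bracket of two functions on $M$ is zero), consistently with the fact that $[\h,\h]=0$ in the graded Lie algebra.

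The only genuine point requiring care is the reduction from ``Poisson map on all functions'' to ``intertwines brackets of linear generators.'' I would justify this by noting that the Lie--Poisson bracket on $(\h[1]\oplus\g)^*[1]$ is determined by its values on linear functions together with the Leibniz rule, so a graded-algebra morphism that respects brackets on generators respects them everywhere. The remaining work is the routine identification of the three brackets via the derived-bracket/Schouten formulas $\{f,X\}=-X(f)$ and $\{X,Y\}=[X,Y]$ already recorded in the excerpt, together with keeping track of signs and degrees. I expect the \emph{main obstacle} to be precisely the bookkeeping of signs in the graded setting---ensuring the $[1]$ shift and the odd/even parity of the generators are handled consistently so that condition 2) comes out as equivariance rather than an anti-equivariance; once the sign conventions are fixed, the equivalence is a direct computation.
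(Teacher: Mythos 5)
Your proposal is correct and follows essentially the same route as the paper's proof: both reduce the Poisson-map condition to checking the brackets of the pulled-back linear generators, identify $\{J_1^*v,J_1^*\tilde v\}=[J_1^*v,J_1^*\tilde v]$ with condition 1), the mixed bracket with condition 2), and note that the $\h$--$\h$ bracket vanishes on both sides by degree reasons. The only difference is that you make explicit the (routine) justification that checking on linear generators suffices, which the paper leaves implicit.
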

\begin{proof}
We denote elements of $\g$  by $v$ and elements of $\h$  by $w$. The equation $\{{J_1}^*v,{J_1}^*\tilde{v}\}={J_1}^*[v,\tilde{v}]$
means by definition that ${J_1}^*$ is an infinitesimal action.
The analog equation for elements of $\h$ is satisfied because both sides vanish by degree reasons.
We have $\{{J_1}^*v,{J_0}^*w\}={J_1}^*v({J_0}^*w)$, where in the second term ${J_1}^*v$ is viewed as 
a vector field on $M$ acting on a function.
This is equal to
${J_0}^*([v,w])$   if{f} ${J_0}$ is $\g$-equivariant (recall that  the action of $\g$ on $\h$ is given by 
$v\mapsto [v,\bullet]$). 
\end{proof}

Summarizing we have:
\begin{cor} Let $M$ be a manifold.
An \emph{action of the graded Lie algebra $\h[1] \oplus \g$   on
$\cM=T^*[1]M$  so that the moment map is Poisson} corresponds exactly
to the following classical data:
\begin{itemize}
\item  a (ordinary) Lie algebra $\g$
\item a linear action of $\g$ on the vector space $\h$
\item an infinitesimal action of $\g$   on $M$ (denoted by  $J_1^*$)
\item a $\g$-equivariant map ${J_0}:  M \rightarrow \h^*$.
\end{itemize}
\end{cor}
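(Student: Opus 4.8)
The plan is to recognize the corollary as a repackaging of Lemma~\ref{poismap} together with the elementary dictionary between the graded objects and their classical counterparts, so that the proof amounts to matching data on the two sides of the asserted correspondence.

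First I would unwind what a graded Lie algebra structure on $\h[1]\oplus\g$ amounts to. By degree reasons the bracket splits into the pieces $[\g,\g]\subset\g$, $[\g,\h[1]]\subset\h[1]$, and $[\h[1],\h[1]]$; the last lands in degree $-2$ and hence vanishes. Thus the structure is exactly a Lie bracket on $\g$ together with a linear action of $\g$ on $\h$, and the graded Jacobi identity reduces to the ordinary Jacobi identity on $\g$ and the module axiom on $\h$. This accounts for the first two items of the classical data.

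Next I would observe that an action of $\h[1]\oplus\g$ on $\cM$ \emph{with moment map} is, by its very definition, the specification of the comoment maps, that is, of the pair of linear maps $J_0^*\colon\h\to C_0(\cM)=C^{\infty}(M)$ and $J_1^*\colon\g\to C_1(\cM)=\chi(M)$ whose associated hamiltonian vector fields realize the morphism $\psi$. Dualizing the degree-zero functions, $J_0^*$ is the same datum as a map $J_0\colon M\to\h^*$, while $J_1^*$ is literally a linear assignment of a vector field on $M$ to each element of $\g$, i.e. a candidate infinitesimal action. This gives the bijection at the level of underlying data.

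The remaining content is supplied directly by Lemma~\ref{poismap}: the moment map $(J_0,J_1)$ is Poisson if and only if $J_1^*$ is a genuine infinitesimal action of $\g$ on $M$ and $J_0$ is $\g$-equivariant, which are precisely the last two items. I would stress that this is also what justifies the remark preceding the lemma: we never impose separately that $\psi$ be a morphism of graded Lie algebras, because the identities $\{J_i^*u,J_j^*u'\}=J_k^*[u,u']$ verified bidegree-by-bidegree in the proof of Lemma~\ref{poismap} are exactly the intertwining condition for $\psi$, and they are equivalent to the Poisson condition. The only point requiring mild care---the ``obstacle''---is checking that the passage between the action data and $(J_0^*,J_1^*)$ is lossless and mutually inverse, so that the composite of these equivalences of data sets is the claimed one-to-one correspondence.
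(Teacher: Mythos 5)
Your proposal is correct and takes essentially the same route as the paper, which presents this corollary precisely as a summary ("Summarizing we have") of the degree-by-degree unpacking of the graded bracket on $\h[1]\oplus\g$, the identification of the comoment data $J_0^*\colon \h\to C^\infty(M)$, $J_1^*\colon\g\to\chi(M)$, and Lemma~\ref{poismap}. One small caveat: your parenthetical claim that the intertwining condition for $\psi$ is \emph{equivalent} to the Poisson condition overstates the one-way implication that the paper asserts and that the corollary needs---a hamiltonian-form morphism of graded Lie algebras only forces $\{J_1^*v,J_0^*w\}-J_0^*[v,w]$ to have vanishing hamiltonian vector field, i.e.\ to be locally constant rather than zero---but since your argument only uses the direction ``Poisson moment map implies morphism,'' the proof stands as written.
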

   
{Until the end of this paper we will consider only actions on $\cM$ with \emph{Poisson} moment map.}\\   
   
Now we introduce   new pieces of data. We assume that $M$ is endowed with a Poisson
 structure $\pi$, i.e  that $\cM$ is endowed with a self-commuting degree $2$
 function $\cS$ (Prop. \ref{dimitri}). Then 
$$(\chi(\cM), [\cdot,\cdot], [X_{\cS},\cdot])$$ is a differential graded Lie algebra (DGLA, see Def. \ref{dla}).
Hence it is natural to assume that $\cM$ is acted upon not by a graded Lie algebra, but rather by a DGLA
$$(\h[1] \oplus \g, [\cdot,\cdot], \delta).$$

\begin{remark} DGLAs concentrated in degrees $-1$ and $0$ are in bijective correspondence to crossed modules of Lie algebras, see Appendix \ref{A}.
\end{remark}

\begin{lemma}\label{differ}
Suppose that $\psi \colon \h[1] \oplus \g \rightarrow {\chi}(\cM)$ is 
a linear map of the form $(w,v)\mapsto \;\;\;X_{{J_0}^*w}+ X_{{J_1}^*v}$.
Then $\psi$ respects differentials if{f}
\begin{itemize}
\item[1)] ${J_1}^*(\delta w)=X^{M}_{{J_0}^*w}$ for all $w\in \h$
\item[2)] The $\g$ action on $M$ is by Poisson vector fields.
\end{itemize}
\end{lemma}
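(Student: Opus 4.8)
The plan is to unwind what it means for the linear map $\psi$ to respect differentials, working degree by degree. The DGLA $\h[1]\oplus\g$ carries a differential $\delta$, which by degree reasons is determined by a single map $\delta\colon\h\to\g$ (raising degree from $-1$ to $0$), while the target $\chi(\cM)$ carries the differential $[X_{\cS},\cdot\,]$. The condition ``$\psi$ respects differentials'' is the single equation
\begin{equation*}
\psi(\delta w)=[X_{\cS},\psi(w)]\qquad\text{for all }w\in\h,
\end{equation*}
together with the vacuous analogue on $\g$ (there is nothing in degree $1$ for $\delta$ to map a degree-$0$ element into, and $[X_{\cS},\cdot\,]$ raises degree, so the degree-$0$ part of the condition lives in a degree where $\psi$ has no content). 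So I would first reduce the problem to this one equation and then interpret its two sides.

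**Interpreting the two sides.** The left-hand side is $\psi(\delta w)=X_{{J_1}^*(\delta w)}$, since $\delta w\in\g$ and $\psi$ sends $\g$ into the degree-$0$ hamiltonian vector fields via ${J_1}^*$. For the right-hand side, I would use that $\psi(w)=X_{{J_0}^*w}$ is the hamiltonian vector field of the degree-$0$ function ${J_0}^*w$, and that the bracket of hamiltonian vector fields is again hamiltonian, with $[X_{\cS},X_{{J_0}^*w}]=X_{\{\cS,{J_0}^*w\}}$. Now the key identity from the graded description (eq.~(1) and the surrounding discussion, namely $\{\cS,f\}=\sharp\,df$ for a degree-$0$ function $f$) gives $\{\cS,{J_0}^*w\}=X^{M}_{{J_0}^*w}$, the Hamiltonian vector field on $M$ of the function ${J_0}^*w$ with respect to $\pi$, viewed as a degree-$1$ element of $\chi(\cM)$. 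Comparing hamiltonians, the degree-$(-1)$-into-degree-$0$ part of the differential condition becomes precisely
\begin{equation*}
{J_1}^*(\delta w)=X^{M}_{{J_0}^*w},
\end{equation*}
which is condition 1).

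**Extracting the second condition.** The remaining content of ``$\psi$ respects differentials'' must come from compatibility with $\cS$ itself at the level of the degree-$0$ generators $v\in\g$. Here I would observe that for $\psi$ to be a morphism of \emph{differential} graded Lie algebras, $\cS$ must be invariant under the $\g$-action, i.e.\ $\{{J_1}^*v,\cS\}=0$ for all $v$; equivalently each generating vector field ${J_1}^*v$ of the $\g$-action on $M$ must preserve the Poisson bivector $\pi$, which is exactly the statement that the $\g$-action on $M$ is by Poisson vector fields, condition 2). I would make this precise by noting that $[X_{\cS},\psi(v)]=[X_{\cS},X_{{J_1}^*v}]=X_{\{\cS,{J_1}^*v\}}$, and that $\psi(\delta v)=0$ since $\delta$ vanishes on $\g$; hence respecting differentials on $\g$ forces $\{\cS,{J_1}^*v\}\in\ker(f\mapsto X_f)$, and by degree reasons (a degree-$2$ function with vanishing hamiltonian vector field is central, hence constant, and here it is forced to be zero) this means $\{\cS,{J_1}^*v\}=0$, i.e.\ $\cL_{{J_1}^*v}\pi=0$.

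**The main obstacle.** The routine part is the bookkeeping with hamiltonian vector fields; the slightly delicate point — and the one I would be most careful about — is pinning down \emph{exactly} which components the phrase ``$\psi$ respects differentials'' encodes, since the differential on the source only acts nontrivially in one degree while the differential on the target shifts degree. The substantive content is recognizing that the degree-$0$ part of the condition, applied to $\g$, is not vacuous but rather secretly encodes the Poisson-invariance condition 2) via the vanishing $\{\cS,{J_1}^*v\}=0$. Getting the identification $\{\cS,{J_0}^*w\}=X^{M}_{{J_0}^*w}$ and the sign conventions right is where a careful check is needed, but no genuinely hard estimate or construction is involved.
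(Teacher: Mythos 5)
Your proof is correct and takes essentially the same route as the paper's: split the condition by degree, use $\{\cS,{J_0}^*w\}=\sharp\, d({J_0}^*w)=X^{M}_{{J_0}^*w}$ on the $\h$-part to get condition 1), and use $\{\cS,{J_1}^*v\}=-\cL_{{J_1}^*v}\pi$ together with the fact that a degree-$2$ function with vanishing hamiltonian vector field must be zero to get condition 2). One flaw in the write-up only: your opening paragraph calls the condition on $\g$ ``vacuous,'' which contradicts your own (correct) third paragraph, where the non-vacuous equation $0=[X_{\cS},X_{{J_1}^*v}]$ is precisely the source of condition 2); that sentence should be deleted or corrected.
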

\begin{proof}
As earlier we denote elements of $\g$  by $v$ and elements of $\h$  by $w$. The equation $[X_{\cS},X_{{J_0}^*w}]=X_{{J_1}^*(\delta w)}$ holds if{f} $\{\cS,{J_0}^*w\}={J_1}^*(\delta w)$, which is just 1).

Since $\delta v=0$, we have to check when
$[X_{\cS},X_{{J_1}^*v}]$ vanishes. It vanishes if{f} $\{\cS,{J_1}^*v\}=-\cL_{{J_1}^*v}\pi$ vanishes, giving 2).
\end{proof}

We summarize:  
\begin{cor}\label{summ}
Let $(M,\pi)$ be a Poisson manifold and $\cM=T^*[1]M$. 
A \emph{structure of a DGLA on $\h[1] \oplus \g$ together with
a morphism of DGLAs  $\psi \colon \h[1] \oplus \g \rightarrow {\chi}(\cM)$ with Poisson moment map} is equivalent to the following classical data\footnote{The first three items encode the fact that $\h[1] \oplus \g$ is a DGLA concentrated in degrees $-1$ and $0$, and can equivalently be  expressed saying that $\h$ and $\g$ form a crossed module of Lie algebras, see Appendix \ref{A}.}:
\begin{itemize}
\item  a (ordinary) Lie algebra $\g$
\item a linear action of $\g$ on the vector space $\h$
\item a linear map  $\delta \colon \h \rightarrow \g$ such that $\delta(v\cdot w)=[v,\delta w]$ and $(\delta w)\cdot \tilde{w}=-(\delta \tilde{w})\cdot w$, for all $v\in \g$ and $w,\tilde{w}\in \h$ 

\end{itemize}
together with
\begin{itemize}
\item an infinitesimal action $J_1^*$ of $\g$   on $(M,\pi)$ by Poisson vector fields
\item a $\g$-equivariant map ${J_0}:  M \rightarrow \h^*$
\end{itemize}
satisfying the following ``moment map condition'' for all $w\in \h$:$${J_1}^*(\delta w)=X^{M}_{{J_0}^*w}.$$ 
\end{cor}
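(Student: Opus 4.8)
The plan is to assemble the statement from three ingredients already established: the preceding Corollary (which translates a graded Lie algebra action with Poisson moment map into classical data), the correspondence between DGLAs concentrated in degrees $-1$ and $0$ and crossed modules of Lie algebras (Appendix \ref{A}), and Lemma \ref{differ} (which encodes the compatibility of $\psi$ with the differentials). Since the asserted correspondence is a genuine equivalence, I would prove it by producing an explicit dictionary and noting that each step is reversible.

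First I would invoke the preceding Corollary. A morphism of \emph{graded} Lie algebras $\psi \colon \h[1]\oplus\g \to \chi(\cM)$ with Poisson moment map is already known to be the same as: a Lie algebra $\g$, a linear $\g$-action on $\h$, an infinitesimal $\g$-action $J_1^*$ on $M$, and a $\g$-equivariant map $J_0 \colon M \to \h^*$. This accounts for all items of the Corollary except the differential $\delta$, the Poisson condition on the $\g$-action, and the moment map condition.

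Next I would unpack the remaining DGLA data. Upgrading the graded Lie algebra $\h[1]\oplus\g$ to a DGLA means specifying a degree $+1$ differential, which by degree reasons is a linear map $\delta \colon \h \to \g$ (its component on $\g$ vanishes, and $\delta^2=0$ holds automatically). The graded Leibniz rule $\delta[x,y]=[\delta x,y]+(-1)^{|x|}[x,\delta y]$ applied to $x=v\in\g$, $y=w\in\h$ gives $\delta(v\cdot w)=[v,\delta w]$; applied to $x=w$, $y=\tilde w\in\h$, where $[w,\tilde w]=0$ for degree reasons, it gives $(\delta w)\cdot\tilde w=-(\delta\tilde w)\cdot w$ after using graded antisymmetry. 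These are precisely the two relations in the third item, and conversely any such $\delta$ defines a DGLA differential; this is exactly the crossed-module correspondence of Appendix \ref{A}.

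Finally I would apply Lemma \ref{differ}: requiring $\psi$ to respect differentials is equivalent to $J_1^*(\delta w)=X^{M}_{{J_0}^*w}$ together with the $\g$-action on $M$ being by Poisson vector fields. The former is the moment map condition, and the latter promotes the infinitesimal $\g$-action of the preceding Corollary to an action by Poisson vector fields. Collecting the pieces yields exactly the listed classical data, and since each translation is bijective the equivalence holds in both directions. The only point requiring care — and the main obstacle, such as it is — is verifying that the graded Leibniz rule for $\delta$ reproduces the crossed-module relations with the correct signs, in particular that the odd--odd case yields $(\delta w)\cdot\tilde w=-(\delta\tilde w)\cdot w$ with the minus sign (coming from the $(-1)^{|x|}$ in the derivation property combined with graded antisymmetry), which is precisely the antisymmetry of the induced bracket $(w,\tilde w)\mapsto(\delta w)\cdot\tilde w$ on $\h$; everything else is routine bookkeeping.
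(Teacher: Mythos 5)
Your proposal is correct and follows essentially the same route as the paper: Corollary \ref{summ} is presented there as a summary obtained by combining the preceding Corollary (graded Lie algebra action with Poisson moment map $\Leftrightarrow$ classical data), Lemma \ref{differ} (compatibility with differentials $\Leftrightarrow$ moment map condition plus Poisson vector fields), and the identification of the DGLA differential with the crossed-module map $\delta$ via degree bookkeeping (Appendix \ref{A}). Your sign verification for $(\delta w)\cdot\tilde w=-(\delta\tilde w)\cdot w$, using the Leibniz rule together with $[w,\tilde w]=0$ for degree reasons and graded antisymmetry, is exactly the content the paper delegates to the footnote and Lemma \ref{dglaxm}.
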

 
See Section \ref{secex} for examples of morphisms of DGLA with Poisson moment map.\\

We comment on when ${J_0}:  M \rightarrow \h^*$ is a Poisson map. Here $\h$ is endowed with the Lie algebra structure $[w,\tilde{w}]_{\delta}:=[\delta w,\tilde{w}]$ (see also the text after Lemma \ref{dglaxm}), and $\h^*$ has the corresponding linear Poisson structure.

\begin{lemma}\label{mopois} Let $\psi$ be as in Lemma \ref{differ}. 
$J_0 \colon (M, \pi)\rightarrow \h^*$ is a Poisson map 
if{f} $J_0$ is  equivariant under the action of the Lie subalgebra $\delta
 (\h)\subset \g$.
\end{lemma}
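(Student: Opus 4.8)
The plan is to unwind the definition of a Poisson map between $(M,\pi)$ and $\h^*$ and compare it with the equivariance condition. Recall that a smooth map $J_0 \colon (M,\pi) \to \h^*$ is Poisson if and only if $\{J_0^*\xi, J_0^*\eta\}_M = J_0^*\{\xi,\eta\}_{\h^*}$ for all linear functions $\xi,\eta$ on $\h^*$, i.e., for $\xi = w, \eta = \tilde w \in \h$ (regarded as linear functions on $\h^*$ via the canonical pairing, so that $J_0^*w = {J_0}^*w$ in the notation of the excerpt). The right-hand side is governed by the linear Poisson structure on $\h^*$ dual to the Lie bracket $[w,\tilde w]_\delta := [\delta w, \tilde w]$, so $J_0^*\{w,\tilde w\}_{\h^*} = {J_0}^*([\delta w,\tilde w])$. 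The goal is therefore to show that the identity $\{{J_0}^*w, {J_0}^*\tilde w\}_M = {J_0}^*([\delta w,\tilde w])$, required for all $w,\tilde w \in \h$, is equivalent to equivariance of $J_0$ under the subalgebra $\delta(\h) \subset \g$.

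First I would compute the left-hand side $\{{J_0}^*w,{J_0}^*\tilde w\}_M$ using the moment map condition from Cor.~\ref{summ}, namely ${J_1}^*(\delta w) = X^M_{{J_0}^*w}$, which says the $\g$-action vector field attached to $\delta w$ is the Hamiltonian vector field of the function ${J_0}^*w$ on $(M,\pi)$. Hence $\{{J_0}^*w,{J_0}^*\tilde w\}_M = X^M_{{J_0}^*w}({J_0}^*\tilde w) = {J_1}^*(\delta w)({J_0}^*\tilde w)$, i.e. the left-hand side is exactly the derivative of the component function ${J_0}^*\tilde w$ along the infinitesimal action of the element $\delta w \in \g$. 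Next I would compute the right-hand side: by the definition of the $\delta$-twisted bracket it equals ${J_0}^*([\delta w,\tilde w])$, where $[\delta w, \tilde w]$ denotes the $\g$-action of $\delta w \in \g$ on $\tilde w \in \h$. Comparing the two sides, the Poisson condition for the pair $(w,\tilde w)$ reads ${J_1}^*(\delta w)({J_0}^*\tilde w) = {J_0}^*((\delta w)\cdot \tilde w)$, which is precisely the $\g$-equivariance relation of Lemma~\ref{poismap}(2) specialized to the element $\delta w$ of the subalgebra $\delta(\h)$.

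To finish, I would note that as $w$ ranges over $\h$ the element $\delta w$ ranges over all of $\delta(\h)$, and $\tilde w$ ranges over all of $\h$, so the collection of identities obtained is exactly the statement that $J_0$ is equivariant for the action of the Lie subalgebra $\delta(\h) \subset \g$. One direction is immediate (equivariance under $\delta(\h)$ gives all the needed bracket identities); for the converse one checks that the Poisson condition, which a priori must hold for all pairs $(w,\tilde w)$, produces exactly the equivariance condition tested against every element of $\delta(\h)$ and every $\tilde w \in \h$, and since the pairing of $\h^*$ with $\h$ is nondegenerate this is the full equivariance statement. Both the bracket on $\h$ and the Poisson structure on $\h^*$ are linear, so it suffices to verify the condition on linear functions, which is what the above does.

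The main obstacle I expect is purely bookkeeping: keeping straight the three bilinear operations in play — the Poisson bracket $\{\cdot,\cdot\}_M$ on $M$, the $\g$-action on $\h$, and the Lie bracket $[\cdot,\cdot]_\delta$ on $\h$ — and correctly identifying the linear Poisson structure on $\h^*$ dual to $[\cdot,\cdot]_\delta$ so that $J_0^*\{w,\tilde w\}_{\h^*}$ unwinds to ${J_0}^*([\delta w,\tilde w])$ rather than to the untwisted bracket. The essential input that makes everything collapse is the moment map condition ${J_1}^*(\delta w) = X^M_{{J_0}^*w}$ from Cor.~\ref{summ}, which converts the Poisson bracket of two moment-map components into a directional derivative along an action vector field, thereby reducing the Poisson-map condition to the equivariance statement already appearing in Lemma~\ref{poismap}.
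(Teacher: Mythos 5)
Your proposal is correct and follows essentially the same route as the paper's proof: both unwind the linear Poisson structure on $\h^*$ dual to $[\cdot,\cdot]_\delta$, use the moment map condition ${J_1}^*(\delta w)=X^M_{{J_0}^*w}$ (Lemma \ref{differ} 1), restated in Cor. \ref{summ}) to convert $\{{J_0}^*w,{J_0}^*\tilde w\}_M$ into ${J_1}^*(\delta w)({J_0}^*\tilde w)$, and then identify the resulting identity as exactly the equivariance condition tested on elements $\delta w\in\delta(\h)$. The only cosmetic difference is that you spell out explicitly the reduction to linear functions and the surjectivity of $w\mapsto\delta w$ onto $\delta(\h)$, which the paper leaves implicit.
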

\begin{proof} For all $w,\tilde{w}\in \h$ we 
 have $$\{J_0^*w,J_0^*\tilde{w}\}_M=X_{J_0^*w}^M(J^*_0\tilde{w})=J^*_1(\delta w)(J^*_0\tilde{w}),$$
where we used Lemma \ref{differ} 1) in the second equality. Further, by definition of $[\bullet,\bullet]_{\delta}$, 
 $$J^*_0[w, \tilde{w}]_{\delta}=J^*_0[\delta w, \tilde{w}].$$ 
$J_0$ being a Poisson map corresponds to  equality of the left-most terms,
and $J_0$ being $\delta (\h)$-equivariant corresponds to equality of the right-most terms.
\end{proof}

\begin{remark}
If $\psi$ is a morphism of DGLAs   with Poisson moment map, then $J_0$ is  a Poisson map.
This is clear from the  Lemma \ref{mopois} together with  Lemma \ref{poismap}.
\end{remark}

In the rest of this section we use the correspondence between degree $1$ $N$-manifolds and vector bundles in order to interpret   in classical terms the action $\psi$.  {The following interpretation of  vector fields appears in the literature, see for instance in \cite{vorQ}\cite{MR2768006}. We provide a proof for completeness.}
\begin{lemma}\label{inter}
Let $A\rightarrow M$ be a vector bundle. 
 There are canonical, bracket preserving identifications 
\begin{eqnarray*} 
{\chi}_{-1}(A[1])&\cong &\{\text{vertical vector fields on $A$  constant on each fiber}\}\\
 {\chi}_{0}(A[1])&\cong& \{\text{vector fields on }A\text{ preserving its vector bundle structure}\}.
\end{eqnarray*}
\end{lemma}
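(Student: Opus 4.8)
The plan is to use that, by definition, a vector field on the $N$-manifold $A[1]$ is a graded derivation of the algebra of functions $C(A[1])=\Gamma(\wedge^{\bullet}A^*)$, and that such a derivation is \emph{completely determined} by its action on the generators in degree $0$, namely $C^{\infty}(M)$, and in degree $1$, namely $\Gamma(A^*)$. I would treat the two degrees separately, set up the two identifications invariantly, and only then verify compatibility of brackets.

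First I would describe $\chi_{-1}(A[1])$. A derivation $X$ of degree $-1$ must send $C^{\infty}(M)=C_0(A[1])$ into $C_{-1}(A[1])=0$, hence it annihilates $C^{\infty}(M)$; being a derivation, its restriction $X\colon \Gamma(A^*)\to C^{\infty}(M)$ is then $C^{\infty}(M)$-linear, i.e. a section of $A$ via the natural pairing. Conversely any section of $A$ defines such a derivation, giving a canonical isomorphism $\chi_{-1}(A[1])\cong\Gamma(A)$. A section $s$ of $A$ is in turn exactly a vertical vector field on the total space which is constant on each fibre, its value in $A_m$ being $s(m)$ independently of the point of the fibre.

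Next, for $\chi_0$: a degree-$0$ derivation $X$ restricts to a genuine vector field $X_M$ on $M$ (a derivation of $C^{\infty}(M)$) and to an operator $\Gamma(A^*)\to\Gamma(A^*)$ obeying $X(f\xi)=X_M(f)\xi+fX(\xi)$. Identifying $\Gamma(A^*)$ with the fibrewise linear functions on the total space of $A$, such a pair is exactly a fibrewise linear vector field on $A$ covering $X_M$, i.e. a vector field preserving the vector bundle structure. I would make this explicit in a local frame $e_a$ of $A$ with dual degree-$1$ fibre coordinates $\theta^a$ and the corresponding linear fibre coordinates $y^a$ on the total space: the assignment $\theta^a\mapsto y^a$, $\partial_{\theta^a}\mapsto\partial_{y^a}$ sends $g^i(x)\partial_{x^i}+h^a_b(x)\theta^b\partial_{\theta^a}$ to $g^i(x)\partial_{x^i}+h^a_b(x)y^b\partial_{y^a}$ and sends $f_a(x)\partial_{\theta^a}$ to the constant vertical field $f_a(x)\partial_{y^a}$, recovering both cases at once.

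Finally, for bracket-preservation I would work in these local coordinates and check that the graded commutator of derivations on $A[1]$ matches the ordinary Lie bracket of the corresponding vector fields on $A$. The case $[\chi_{-1},\chi_{-1}]=0$ is automatic (a bracket of two degree-$(-1)$ derivations would be a degree-$(-2)$ derivation, of which there are none), matching the vanishing of the bracket of two constant vertical fields; the cases $[\chi_0,\chi_0]$ and $[\chi_0,\chi_{-1}]$ carry the content. The \emph{main obstacle} I anticipate is purely the bookkeeping of Koszul signs, since the $\theta^a$ are odd while the $y^a$ are even: one must check that these signs do not spoil the correspondence. I would verify directly that in both pictures the second-order terms cancel in the commutator $VW-WV$ — using $\theta^a\theta^b=-\theta^b\theta^a$ and $\partial_{\theta^a}\partial_{\theta^b}=-\partial_{\theta^b}\partial_{\theta^a}$ on the graded side, and ordinary commutativity on the classical side — leaving identical first-order expressions. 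Coordinate-independence of all the identifications is then automatic from the invariant descriptions of the first two steps.
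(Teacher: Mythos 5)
Your proposal is correct and takes essentially the same approach as the paper: both rest on identifying the degree-$0$ and degree-$1$ functions on $A[1]$ with the fiberwise constant and fiberwise linear functions on $A$, and on the fact that derivations are determined by their action on these generators. The only real difference is in the last step, which is cosmetic: where you verify bracket preservation in local coordinates with Koszul-sign bookkeeping, the paper gets it at once from the observation that the at-most-linear functions generate $C^{\infty}(A)$, so that on this common generating subspace both brackets are computed by the same (anti)commutators of operators.
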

\begin{proof} We have
$C^{\infty}_0(A[1])=C^{\infty}(M)$ and $C^{\infty}_1(A[1])=\Gamma(A^*)=
C^{\infty}_{lin}(A)$, where the latter denotes the fiber-wise linear functions on $A$.
${\chi}_{-1}(A[1])\oplus {\chi}_0(A[1])$ acts on $C^{\infty}_{-1}(A[1])\oplus C^{\infty}_0(A[1])$, hence it also acts (by non-graded derivations) on the functions on $A$ which are at most linear on the fibers. Since
functions on $A$ which are at most linear on the fibers generate (up to completion issues) $C^{\infty}(A)$, we conclude that we can view vector fields on $\cM$ of degree $-1$ and $0$ as vector fields on $A$, and that this correspondence preserves the Lie bracket operation on vector fields.

Concretely,  ${\chi}_{-1}(A[1])$ acts by annihilating $C^{\infty}(M)$ and acts on $\Gamma(A^*)$ by contraction with an element of $\Gamma(A)$. ${\chi}_0(A[1])$ acts preserving the fiberwise constant and the fiberwise linear functions on $A$. 
\end{proof}

\begin{remark}\label{gmVB}
If $A=T^*M$, so that $A[1]=\cM$, the  identification of Lemma \ref{inter} restricts to:
 \begin{eqnarray*}
 \forall  F\in C^{\infty}_0(\cM)=C^{\infty}(M)&:& X_F\in  {\chi}_{-1}(\cM) \mapsto \text{the constant extension of } -dF\in \Omega^1(M)\\
  \forall Y\in  C^{\infty}_1(\cM)=\chi(M)\;\;\;\;&:& X_Y\in {\chi}_{0}(\cM) \;\;\mapsto \text{the cotangent lift of }Y.
\end{eqnarray*}
Hence an infinitesimal  action $\psi$ of the graded Lie algebra $\h[1]\oplus \g$ in $\cM=T^*[1]M$ can be also viewed as an infinitesimal action  of the Lie algebra $\h \oplus \g$ (with the ``same'' bracket as the one of the graded Lie algebra $\h[1]\oplus \g$) on $T^*M$: 
\begin{center}\fbox{
\begin{Beqnarray*}
 \hat{\psi} \colon \h \oplus \g &\rightarrow& {\chi}(T^*M)\\
(w,0)&\mapsto& \;\;\; -d({J_0}^*w) \text{ viewed as  vertical v.f.  constant on fibers}\\
(0,v)&\mapsto& \;\;\; \text{the cotangent lift of }{J_1}^*v
\end{Beqnarray*}}
\end{center}
\end{remark}

\section{The global quotient of  $T^*[1]M$}

In this section we consider quotients of $\cM=T^*[1]M$ by actions of graded Lie algebras or   DGLAs.

Let $\h[1] \oplus \g$ be a graded Lie algebra. This is equivalent to saying the $\g$ is a Lie algebra and  $\h$ is a $\g$-module (see Section \ref{actions}).
Let $G$ be the simply connected Lie group integrating the Lie algebra $\g$. Let $\rho$ denote the Lie group action of $G$ on $\h$ integrating the action of $\g$ on $\h$.

\begin{lemma}
The graded Lie group integrating the graded Lie algebra   $\h[1]
\oplus \g$ is $G\ltimes \h[1]$, with product 
\begin{equation}\label{Gh1}
(g_1,w_1)\cdot
(g_2,w_2)=(g_1g_2,w_1+\rho(g_1)w_2).
\end{equation}
 \end{lemma}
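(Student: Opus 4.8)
The plan is to prove this by identifying what a graded Lie group with graded Lie algebra $\h[1]\oplus\g$ must be, and then verifying that the proposed semidirect product $G\ltimes\h[1]$ has exactly the right infinitesimal data. Since $\h[1]\oplus\g$ is concentrated in degrees $-1$ and $0$, its integrating graded Lie group should have body $G$ (integrating the degree $0$ part $\g$) and a degree $-1$ ``odd'' direction modeled on $\h[1]$. The natural candidate is the action graded Lie group $G\ltimes\h[1]$, where $G$ acts on $\h[1]$ through the representation $\rho$ integrating the $\g$-action on $\h$. So the first step is to make precise what \emph{graded Lie group} means here (a group object in graded manifolds whose functions form a graded Hopf algebra), and to record that its tangent graded Lie algebra at the identity recovers $\h[1]\oplus\g$.

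Next I would verify that the formula \eqref{Gh1} actually defines a group structure and that it is a morphism of graded manifolds. Associativity follows from associativity in $G$ together with the fact that $\rho$ is a representation:
\begin{equation*}
\rho(g_1)\bigl(w_2+\rho(g_2)w_3\bigr)=\rho(g_1)w_2+\rho(g_1g_2)w_3,
\end{equation*}
so both bracketings of $(g_1,w_1)\cdot(g_2,w_2)\cdot(g_3,w_3)$ give $\bigl(g_1g_2g_3,\;w_1+\rho(g_1)w_2+\rho(g_1g_2)w_3\bigr)$. The identity is $(e,0)$ and the inverse of $(g,w)$ is $(g^{-1},-\rho(g^{-1})w)$. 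Because $\h$ sits in degree $1$ (so $\h[1]$ in degree $-1$), the coordinates $w$ are odd/negatively-graded generators, and the product \eqref{Gh1} is linear in the $w$'s with coefficients depending smoothly on $g$; hence it is genuinely a morphism respecting the $\ZZ$-grading.

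Finally I would check that differentiating \eqref{Gh1} at the identity reproduces the bracket of $\h[1]\oplus\g$. The degree $0$ directions give the Lie algebra $\g$ of $G$; the degree $-1$ directions give $\h$; the mixed bracket $[v,w]$ for $v\in\g$, $w\in\h$ is recovered as the infinitesimal form of the conjugation action, which by \eqref{Gh1} is exactly $\frac{d}{dt}\big|_0\rho(\exp tv)w=v\cdot w$; and the self-bracket $[\h,\h]$ vanishes by degree reasons (two degree $-1$ elements would bracket into degree $-2$, which is zero). This matches the graded Lie algebra structure of $\h[1]\oplus\g$ described in Section~\ref{actions}, where $\g$ is an ordinary Lie algebra and $\h$ is a $\g$-module. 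The main obstacle I anticipate is not any hard computation but rather pinning down the correct categorical framework for ``graded Lie group'' so that the Lie-theoretic statements (existence and uniqueness of the simply connected integration, the exponential, differentiation of the conjugation action) are legitimately available; once that setup is fixed, the verification is a routine matter of checking the semidirect-product identities above.
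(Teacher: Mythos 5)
Your proof is correct in substance, but it takes a genuinely different route from the paper's. You work entirely inside the graded category: you verify the group axioms for \eqref{Gh1} by direct computation, check that the multiplication respects the grading, and differentiate at the identity to recover the bracket of $\h[1]\oplus\g$ (with $[\h,\h]=0$ forced by degree reasons). The paper instead reduces everything to ordinary, ungraded Lie theory: since brackets of two degree $-1$ elements must vanish, the graded bracket on $\h[1]\oplus\g$ makes the plain vector space $\h\oplus\g$ into the classical semidirect product Lie algebra of $\g$ acting on $\h$ viewed as an abelian Lie algebra; classical integration theory (cited from Varadarajan) then yields the semidirect product group of $G$ and the abelian group $\h$ with exactly the multiplication \eqref{Gh1}, and the graded statement follows from the canonical identification between degree $1$ N-manifolds and vector bundles (Lemma \ref{inter}) together with the fact that the graded Lie functor is the algebraic rephrasing of the ordinary one. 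This reduction is precisely what dissolves the one obstacle you flag at the end: one never needs to set up existence, uniqueness, or an exponential map for graded Lie groups from scratch, because all of that (including the fact that the semidirect product is the \emph{simply connected} integration when $G$ is simply connected and $\h$ is a vector space) is imported from the ungraded theorem. Conversely, your direct verification is more self-contained computationally, but as written it only establishes that $G\ltimes\h[1]$ is \emph{an} integration; to conclude that it is \emph{the} integrating group you would still need uniqueness of simply connected integrations in the graded setting, which is exactly the piece of framework you left open and which the paper's detour through the ungraded category supplies for free.
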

\begin{proof}
The graded Lie bracket on $\h[1] \oplus \g$ makes the vector space $\h \oplus \g$ into an honest Lie algebra, namely the semidirect product Lie algebra \cite[Sect. 3.14]{varLG} of $\g$ and $\h$ (viewed as an   abelian Lie algebra) by the action. This semidirect product Lie algebra integrates to the semidirect product Lie group of $G$ and the abelian Lie group $\h$ by the action $\rho$; its multiplication  is given by eq. \eqref{Gh1} (see \cite[Sect. 3.15]{varLG}). The lemma follows since there is a  canonical identification between degree 1 N-manifolds and vector bundles (see Lemma \ref{inter}), and 
the construction of the graded Lie algebra of a graded Lie group is given by phrasing  the construction of the Lie algebra of an ordinary Lie group
in algebraic terms \cite[Sect. 7.1]{var}. \end{proof}

Now let $\psi \colon \h[1] \oplus \g \rightarrow {\chi}(\cM)$
be an infinitesimal action.
The infinitesimal action $\psi$ integrates\footnote{We introduce $\Psi$ because, being a global action, it is geometrically more appealing than its infinitesimal counterpart $\psi$. In what follows (Prop. \ref{glocM} and Prop. \ref{MWcm}) $\Psi$ can be replaced by the infinitesimal $\psi$ since the Lie group $G$ is connected.} to a left action
\begin{equation*}
\boxed{{\Psi}\colon (G\ltimes \h[1]) \times \cM \rightarrow \cM}.
\end{equation*}
 
 \begin{remark}\label{gmVB2} 
We saw in Remark \ref{gmVB} that the graded infinitesimal action $\psi$   can be also viewed as an infinitesimal action $\hat{\psi}$ of a (ordinary) Lie algebra on $T^*M$. 
Similarly, the action $\Psi$ corresponds to the integration of $\hat{\psi}$, i.e., to 
the following action of $G\ltimes \h$ on $T^*M$: $G$ acts by the cotangent lift of its action on $M$, and $w\in \h$ acts
translating in the fiber directions by 
$-d({J_0}^*w)$.
\end{remark}

Now we determine the quotient of $\cM$ by the action $\Psi$, defined as the graded manifold whose algebra of functions is isomorphic to \begin{equation}\label{inva}
 C^{\infty}(\cM)^{\Psi}= C^{\infty}(\cM)^{\psi}:=\{F\in C^{\infty}(\cM): \psi(z)(F)=0 \text{ for all }z\in  \h[1] \oplus \g \}.
\end{equation}

\begin{prop}\label{glocM}  
Let $\h[1] \oplus \g$ be a graded Lie algebra. Let $\psi \colon \h[1] \oplus \g \rightarrow {\chi}(\cM)$
be an infinitesimal action with Poisson moment map $(J_0,J_1)$. Assume that $J_0 \colon M \rightarrow \h^*$ is a submersion and that 
the $G$-action on $M$ obtained integrating $J_1^* \colon \g \rightarrow \chi(M)$ is free and proper. 
 Then  the quotient of $\cM$ by the $\Psi$ action is smooth, and 
there is a canonical isomorphism  $$\cM/(G\ltimes \h[1])\cong (D/G)^*[1]$$
where $D:=ker (J_0)_* \subset TM$.

Assume further that $\h[1] \oplus \g $ is a DGLA, $(M,\pi)$ a Poisson manifold, and that $\psi$ is a morphism of DGLAs.
Then $(D/G)^*[1]$  inherits a degree $-1$ Poisson structure and a homological Poisson vector field. Hence $D/G\rightarrow M/G$ is a Lie \emph{bi}algebroid.
\end{prop}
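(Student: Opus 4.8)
The plan is to transport the two canonical structures living on $\cM=T^*[1]M$---its degree $-1$ Poisson bracket and the homological vector field $X_{\cS}$---down to the quotient, and then to recognize the resulting data as a Lie bialgebroid. Throughout I use that the first part of Prop. \ref{glocM} already gives smoothness together with the identification $\cM/(G\ltimes\h[1])\cong (D/G)^*[1]$, whose functions are, by \eqref{inva}, the invariant functions $C(\cM)^{\psi}$.

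First I would produce the degree $-1$ Poisson structure. The symplectic form on $\cM$ induces the degree $-1$ bracket $\{\cdot,\cdot\}$, and the fundamental vector fields of $\psi$ are the Hamiltonian fields $X_{{J_0}^*w}$ and $X_{{J_1}^*v}$; being Hamiltonian they are Poisson, so their flows preserve $\{\cdot,\cdot\}$. Consequently $C(\cM)^{\psi}$ is a graded Poisson subalgebra of $C(\cM)$, and since $C((D/G)^*[1])\cong C(\cM)^{\psi}$ the quotient $(D/G)^*[1]$ inherits a degree $-1$ Poisson bracket. On a degree $1$ N-manifold such a bracket is exactly a Lie algebroid structure, here on $D/G\rightarrow M/G$.

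Next I would descend $X_{\cS}$. Since $\{\cS,\cS\}=0$, the degree $1$ vector field $X_{\cS}$ satisfies $X_{\cS}^2=\tfrac12 X_{\{\cS,\cS\}}=0$, so it is homological, and being Hamiltonian it is a Poisson vector field. To descend it I must check it preserves $C(\cM)^{\psi}$, and this is the only place the hypothesis that $\psi$ is a morphism of DGLAs enters: that hypothesis is precisely the identity $[X_{\cS},\psi(z)]=\psi(\delta z)$ for all $z\in\h[1]\oplus\g$ (equivalently Lemma \ref{differ} and Cor. \ref{summ}: $\{\cS,{J_0}^*w\}={J_1}^*(\delta w)$ for $w\in\h$, and $\cL_{{J_1}^*v}\pi=0$ with $\delta v=0$ for $v\in\g$). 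Hence for invariant $F$ and any $z$, using $\psi(z)F=0$ the graded commutator gives $\psi(z)(X_{\cS}F)=\pm[X_{\cS},\psi(z)]F=\pm\psi(\delta z)F=0$, so $X_{\cS}F$ is again invariant. Thus $X_{\cS}$ descends to a homological vector field $\un{Q}$ on $(D/G)^*[1]$; and since $X_{\cS}$ is a derivation of $\{\cdot,\cdot\}$ by the graded Jacobi identity and $C(\cM)^{\psi}$ is a Poisson subalgebra, $\un{Q}$ is a Poisson vector field for the descended bracket.

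Finally I would invoke the standard graded-geometric dictionary (see \cite{Dima}): on the degree $1$ N-manifold $(D/G)^*[1]=A[1]$ with $A:=(D/G)^*$, a degree $-1$ Poisson structure together with a compatible homological vector field is the same as a Lie bialgebroid. Concretely the Poisson bracket makes $A^*=D/G$ a Lie algebroid, the homological field $\un{Q}$ makes $A=(D/G)^*$ a Lie algebroid, and the Poisson-vector-field compatibility is exactly the Lie bialgebroid axiom; this yields the Lie bialgebroid $D/G\rightarrow M/G$. The main obstacle is conceptual rather than computational: one must recognize that the morphism-of-DGLAs condition is tailor-made to make $X_{\cS}$ project along the full quotient map, and that the descended pair is governed by the structural correspondence with Lie bialgebroids. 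Granting the smoothness supplied by the first part and this dictionary, every verification reduces to the single identity $[X_{\cS},\psi(z)]=\psi(\delta z)$ together with $X_{\cS}$ being Hamiltonian.
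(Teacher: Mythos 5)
Your treatment of the \emph{second} assertion is correct and essentially identical to the paper's own argument: $C(\cM)^{\psi}$ is a graded Poisson subalgebra because the fundamental vector fields are Hamiltonian; $X_{\cS}$ preserves $C(\cM)^{\psi}$ because the DGLA-morphism hypothesis gives exactly $[X_{\cS},\psi(z)]=\psi(\delta z)$, so for invariant $F$ one has $\psi(z)(X_{\cS}F)=\pm X_{\cS}(\psi(z)F)+[\psi(z),X_{\cS}]F=0$; the descended field $\un{Q}$ is homological and Poisson; and Remark \ref{bial} converts the pair (degree $-1$ bracket, compatible homological field) into the Lie bialgebroid structure on $D/G$. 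This is the same computation the paper performs.

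However, there is a genuine gap: the \emph{first} assertion --- smoothness of the quotient and the canonical isomorphism $\cM/(G\ltimes \h[1])\cong (D/G)^*[1]$ --- is itself a conclusion of Prop.~\ref{glocM}, so you cannot cite ``the first part of Prop.~\ref{glocM}'' as an input; that is circular. This is where the paper invests most of its effort, and none of it appears in your proposal. Concretely, one must (i) compute the invariant functions in low degrees: $C_0(\cM)^{\Psi}=C^{\infty}(M)^G$ and $C_1(\cM)^{\Psi}=$ the $G$-invariant sections of $D=\ker(J_0)_*$, which already requires verifying that the $G$-action preserves $D$ (this uses the $\g$-equivariance of $J_0$ from Lemma~\ref{poismap}); (ii) show that the hypotheses ($J_0$ a submersion, the $G$-action free) make the infinitesimal action $\psi$ pointwise injective, so that its image spans an involutive distribution on $T^*[1]M$; and (iii) apply Lemma~\ref{Redu}, together with the smoothness of the vector bundle $D/G\rightarrow M/G$, to conclude that $C(\cM)^{\Psi}$ is generated by its elements of degrees $0$ and $1$ and hence coincides with $C((D/G)^*[1])$. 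Without step (iii) in particular, the identification of the invariant functions with the function algebra of a smooth graded manifold --- the identification on which your entire second half rests --- has no justification, since a priori $C(\cM)^{\Psi}$ could contain invariant elements of degree $1$ not expressible through degrees $0$ and $1$ data over $M/G$, or fail to be the function sheaf of any N-manifold at all.
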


{Recall that a {vector field} $Q$ on a graded manifold is \emph{homological} if{f} it has degree 1 and $[Q,Q]=0$. A graded manifold together with  a homological vector field constitutes a Q-manifold \cite{AKSZ}.}
In the  proof we will use the following characterizations of the notions of 
Lie algebroid
and Lie bialgebroid {(see \cite{MK2} for the definition  of Lie (bi)algebroid).}
\begin{remark}\label{bial}
 A \emph{Lie algebroid} is a vector bundle $A$ such that $A[1]$
is endowed with a homological vector field, i.e., a degree $1$ vector field $Q_A$ with $[Q_A,Q_A]=0$ \cite{vaintrob}.
Equivalently, it is a vector bundle $A$ such that $A^*[1]$ is endowed with a degree $-1$ Poisson structure \cite[Section 4.3]{yvette}. A \emph{Lie bialgebroid} is a vector bundle $A$ such that 
$A^*[1]$ is endowed with a homological vector field $Q_{A^*}$ and a degree $-1$ Poisson structure $\{\cdot,\cdot\}$, compatible in the sense that  $Q\{X,Y\}=\{Q(X),Y\}+\{X,Q(Y)\}$ for all $X,Y \in \Gamma(A)=C^{\infty}_1(A^*[1])$. The compatibility condition can be rephrased saying that $Q$ is a Poisson vector field on $A^*[1]$.
\end{remark}

\begin{proof}
$C^{\infty}_0(\cM)^{\Psi}$ agrees with $C^{\infty}(M)^G$, since the $G$ action on $M$ is obtained integrating the vector fields $J^*_1v$ for $v\in \g$. 
$C^{\infty}_1(\cM)^{\Psi}$ consists of sections of $D$ which are invariant under the $G$ action on $M$.
Notice that the $G$-action on $M$ preserves $D$: elements $X\in \Gamma(D)\subset \chi(M)$ are characterized by $\langle d(J^*_0w), X \rangle =0$ for all $w\in \h$, and for all $v\in \g$ we have
$$\langle d(J^*_0w), \cL_{J^*_1v} X \rangle = -\langle \cL_{J^*_1v} d(J^*_0w), X \rangle=-\langle d(J^*_0[v,w]),X  \rangle=0$$
where the second equality holds by Lemma \ref{poismap} 2).

Our assumptions imply that the infinitesimal action $\psi$ is  locally free in the following sense:
for each $p\in M$ the linear map $\h[1] \oplus \g \rightarrow  T_p\cM$, obtained from $\psi$ evaluating at $p$, is injective. Using \cite[Lemma 4.7.3]{var}
it follows
that the image of   $\psi$ spans a distribution  
on $T^*[1]M$.  
Further it is an involutive distribution since the map $\psi$ preserves brackets.
By the above description the degree $0$ and $1$ invariant functions satisfy assumptions $i)$ and $ii)$ of Lemma \ref{Redu}. Using that Lemma and the fact that $D/G \rightarrow M/G$ is a smooth vector bundle we conclude that 
 $C^{\infty}(\cM)^{\Psi}\cong C^{\infty}((D/G)^*[1])$, proving the first statement.

To prove the second part of the lemma notice that
$C^{\infty}(\cM)^{\Psi}$ is a graded Poisson subalgebra of $C^{\infty}(\cM)$ since the infinitesimal action $\psi$ acts by Poisson vector fields. Hence $(D/G)^*[1]$
is endowed with a degree $-1$ Poisson structure.
The homological vector field  $Q=X_{\cS}=\{\cS,\bullet\}$
preserves\footnote{The special case that $\cS \in  C^{\infty}(\cM)^{\Psi}$ occurs if{f} the differential $\delta \colon \h \rightarrow \g$ is trivial. In this case the fibers of $J_0$ are Poisson submanifolds of $(M,\pi)$.}
 $C^{\infty}(\cM)^{\Psi}$: if $X$ is an infinitesimal generator of the action and $F\in  C^{\infty}(\cM)^{{\Psi}}$     then
$$X(Q(F))=(-1)^{|X|}Q(X(F))+[X,Q](F)=0,$$ because
the assumption that  $\psi$ is a morphism of DGLAs ensures that $[X,Q]$ is also an infinitesimal generator of the action. So $Q$ induces a homological vector field $\un{Q}$ on 
$(D/G)^*[1]$. $\un{Q}$ is a Poisson vector field, since $Q$ is. Hence, by Remark \ref{bial}, $D/G$ is a Lie bialgebroid.
\end{proof}

\section{Hamiltonian reduction of $T^*[1]M$}\label{sec:hamred}

In this Section  we perform Marsden--Weinstein reduction at zero for the hamiltonian action $\Psi$ on $\cM=T^*[1]M$.

\begin{prop}\label{MWcm}
Let $\h[1] \oplus \g$ be a graded Lie algebra. Let $\psi \colon \h[1] \oplus \g \rightarrow {\chi}(\cM)$
be an infinitesimal action with Poisson moment map $(J_0,J_1)$. Assume that zero is a regular value of $J_0 \colon M \rightarrow \h^*$ and that the $G$-action on $C:=J_0^{-1}(0)$ obtained integrating\footnote{Since $J_0$ is $\g$-equivariant by Lemma
 \ref{poismap} 2), $G$ acts on $C\subset M$.}
$J_1^* \colon \g \rightarrow \chi(M)$ is free and proper.
Then  
 $\cC:=({J_0},{J_1})^{-1}(0)$ is smooth,  
the Marsden--Weinstein reduced space  $\cC/(G\ltimes \h[1])$ is a degree $1$ symplectic manifold, and we have a   canonical symplectomorphism
 $$\cC/(G\ltimes \h[1]) \cong T^*[1](C/G).$$

Assume further that $\h[1] \oplus \g $ is a DGLA, $(M,\pi)$ a Poisson manifold, and that $\psi$ is a morphism of DGLAs. Then the function  $\cS\in C^{\infty}_2(\cM)$ corresponding to $\pi$ descends to a self-commuting function on 
 $\cC/(G\ltimes \h[1])$. Hence $C/G$  has an induced Poisson structure.
 \end{prop}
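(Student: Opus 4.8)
The plan is to recognize $\cC=(J_0,J_1)^{-1}(0)$ as a \emph{coisotropic} submanifold of $\cM$ and then to feed it into the coisotropic reduction mechanism of Section~\ref{sec:coiso}: once the vanishing ideal $\cI$ of $\cC$ is a Poisson ideal, the normalizer quotient $\cN(\cI)/\cI$ carries an induced graded Poisson bracket, and (as noted just before Prop.~\ref{coisocase}) a degree $2$ function $\cS\in\cN(\cI)$ descends to a function $\un{\cS}$ whose self-bracket is the class of $\{\cS,\cS\}$ modulo $\cI$. Since $\{\cS,\cS\}=0$, self-commutation of $\un{\cS}$ will then be automatic, and the only genuine point to verify is the membership $\cS\in\cN(\cI)$.

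First I would check that $\cC$ is coisotropic. Its ideal $\cI$ is generated by the degree $0$ functions $J_0^*w$ ($w\in\h$) together with the degree $1$ functions $J_1^*v$ ($v\in\g$). Because the moment map is Poisson, Lemma~\ref{poismap} gives $\{J_1^*v,J_1^*\tilde v\}=J_1^*[v,\tilde v]\in\cI_1$ and $\{J_1^*v,J_0^*w\}=J_0^*([v,w])\in\cI_0$, while $\{J_0^*w,J_0^*\tilde w\}=0$ by degree reasons. Hence $\{\cI,\cI\}\subset\cI$, so $\cI\subset\cN(\cI)$ and $\cN(\cI)\cap\cI=\cI$. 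By the first part of the Proposition the quotient $\cC/(G\ltimes\h[1])$ is the smooth degree $1$ symplectic manifold $T^*[1](C/G)$; since $G$ is connected its $\Psi$-invariant functions are the $\psi$-invariant ones, which are precisely $\cN(\cI)/\cI$.

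It then remains to verify $\{\cS,\cI\}\subset\cI$, and here I would invoke the two conditions characterising a DGLA morphism, recorded in Lemma~\ref{differ}. Condition~1) gives $\{\cS,J_0^*w\}=J_1^*(\delta w)$, a degree $1$ generator of $\cI$, so $\{\cS,\cI_0\}\subset\cI_1$; condition~2) says the $\g$-action is by Poisson vector fields, i.e. $\{\cS,J_1^*v\}=-\cL_{J_1^*v}\pi=0$, so $\{\cS,\cI_1\}=0$. Thus $\cS\in\cN(\cI)$, the class $\un{\cS}:=\cS\bmod\cI$ is a well-defined degree $2$ function on $\cC/(G\ltimes\h[1])$, and computing the reduced bracket by lifting representatives to $\cN(\cI)$ yields $\{\un{\cS},\un{\cS}\}=\{\cS,\cS\}\bmod\cI=0$. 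Transporting $\un{\cS}$ along the symplectomorphism $\cC/(G\ltimes\h[1])\cong T^*[1](C/G)$ and applying Prop.~\ref{dimitri} then produces the Poisson structure on $C/G$.

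I expect the only delicate point to be organizational rather than computational: matching the Marsden--Weinstein reduction by the $2$-group $G\ltimes\h[1]$ with the purely algebraic coisotropic reduction of $\cC$, equivalently identifying the tangent distribution of the action with the characteristic distribution spanned by the Hamiltonian vector fields $X_{J_0^*w}$ and $X_{J_1^*v}$ of the constraints. This identification is already furnished by the first part of the Proposition, after which every remaining step reduces to the one-line bracket computations fed by Lemmas~\ref{poismap} and~\ref{differ}.
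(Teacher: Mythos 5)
Your overall route is the same as the paper's: view $\cC$ as a coisotropic graded submanifold, identify the Marsden--Weinstein quotient with the algebraic coisotropic quotient $\cN(\cI)/\cI$ of Section~\ref{sec:coiso}, and obtain the descent and self-commutation of $\cS$ from the membership $\cS\in\cN(\cI)$. Your verification of coisotropicity (via Lemma~\ref{poismap}) and of $\cS\in\cN(\cI)$ (via the two conditions of Lemma~\ref{differ}) is exactly the paper's argument, and that part --- the whole second half of the Proposition --- is correct and complete.

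The gap is in the first half. Twice you appeal to ``the first part of the Proposition'': once to assert that $\cC/(G\ltimes\h[1])$ is the smooth symplectic manifold $T^*[1](C/G)$, and once (in your closing paragraph) to dispose of the identification of the group-action quotient with the characteristic-distribution quotient. But the first part of the Proposition is itself part of the statement being proven, so as written this is circular, and two of its ingredients never get established. Concretely, the paper proves the first half as follows: (i) $\cC$ is a smooth graded submanifold because $0$ is a regular value of $J_0$ and the $G$-action on $C$ is free, so the degree-$1$ generators $J_1^*v$ span a constant-rank subbundle $E\subset TM|_C$ and $\cC=E^{\circ}[1]$; (ii) the Marsden--Weinstein quotient coincides with the coisotropic quotient because $\left(C(\cM)/\cI\right)^{\Psi}=\cN(\cI)/\cI$ --- essentially the invariant-function identification you state in passing, valid because $\psi$ is generated by the Hamiltonian vector fields of the constraints; (iii) the first part of Prop.~\ref{coisocase} then yields the symplectomorphism with $T^*[1]\un{C}$, where $\un{C}=C/E=C/G$ is smooth precisely by the freeness and properness hypotheses. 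You already have the coisotropicity needed for (iii) and you state (ii), but (i) is never checked, and (iii) must be what is cited in place of the circular reference; with those repairs your argument becomes the paper's proof.
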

 
\begin{remark}
When $\h=\{0\}$, the first part of Prop. \ref{MWcm} specializes (up to the degree shift by one) to the following classical statement: given a free and proper action of a Lie group $G$  on a manifold $M$, the Marsden-Weinstein quotient at zero of its cotangent lift is symplectomorphic to $T^*(M/G)$.
\end{remark}

\begin{proof}    
The degree $0$ constraints $\cI_0$ of $\cC$ are generated by $\{{J_0}^*w\}_{w\in \h}$, and the degree $1$ constraints $\cI_1$ are generated by  $\{{J_1}^*v\}_{v\in \g}$. Hence,
due to the  assumptions on the freeness of the $G$-action and on the regular value of $J_0$,
$\cC$ is a (smooth) graded submanifold of $\cM$. (We have $\cC=E^{\circ}[1]$ where $E=[\{{J_1}^*v\}_{v\in \g}]|_C \subset TM|_C$.)
Further  $\cC$  a coisotropic submanifold since $(J_0,J_1)$ is a Poisson map. This means that the vector fields generating the infinitesimal action $\psi$ of $\h[1] \oplus \g$ preserve $\cI$.
The graded algebra of functions on $\cC/(G\ltimes \h[1])$ is given by
$\left( C^{\infty}(\cM)/\cI \right )^{\Psi}$,  defined analogously to eq. \eqref{inva}.  
We have $$\left( C^{\infty}(\cM)/\cI \right )^{\Psi}=\cN(\cI)/\cI,$$
where $\cN(\cI):=\{F\in C^{\infty}(\cM): \{\cI,F\}\subset \cI\}$ is the Poisson normalizer of  $\cI$ in $C^{\infty}(\cM)$, hence   the quotient of $\cC$ by the action $\Psi$ coincides with the coisotropic quotient of $\cC$. 
Since  $C/E=C/G$ is smooth by our assumptions on freeness and properness,
 we can apply the first part of Prop. \ref{coisocase} to 
conclude the first half of our proof.

For the second half of the proof we notice that $\cS\in \cN(\cI)$, since the infinitesimal action $\psi$ respect the differentials $\delta$ on $\h[1] \oplus \g$
and $[X_{\cS},\cdot]$ on  ${\chi}(\cM)$ (and since the constant functions lie in $C^{\infty}_0(\cM)$). Hence $\cS$ descends to a function on 
$T^*[1](C/G)$ which commutes with itself, which by Prop. \ref{dimitri} corresponds to a Poisson bivector field on $C/G$. 
\end{proof}

\begin{remark}\label{PoisbrCG}
$C$ is a coisotropic submanifold of $(M,\pi)$ and $\sharp N^*C\subset E\subset TC$.
(This can be seen from the text preceding  Prop. \ref{coisocase} or from 
Lemmas \ref{differ} 1) and \ref{poismap} 2).)  The Poisson bracket of two functions on the quotient 
$C/G$ is computed as follows by  Lemma \ref{desc}: take their pullbacks to $C$, take any extension to $M$, apply the Poisson bracket of $M$ and restrict to $C$.
\end{remark}

\begin{remark}
Using the identification between the action $\Psi$ on $\cM=T^*[1]M$ and the action of $G \rtimes \h$ on $T^*M$ (see Remark  \ref{gmVB2}) we can 
 describe the first part of Prop. \ref{MWcm} as follows:  $E=[\{{J_1}^*v\}_{v\in \g}]|_C$ consist of tangent spaces along $C$ to the orbits of the $G$ action, and is tangent to $C$. The quotient of $E^{\circ}\subset T^*M$ is obtained dividing out $E^{\circ}$ by $\{d({J_0}^*w)|_C\}_{w\in \h}=N^*C$ and then dividing  by the cotangent lift of the $G$ action.
\end{remark}

\section{Actions on the symplectic groupoid $\Gamma$}\label{sec:groupGamma}

Until the end of this paper we assume the following set-up:  
\begin{itemize}
\item[--] $(M,\pi)$ is an integrable\footnote{This means that there exists a  \emph{symplectic groupoid} $\Gamma$ integrating the Lie algebroid $T^*M$.} Poisson manifold,
\item[--] $\h[1] \oplus \g$ is a DGLA,
\item[--]  $\psi \colon \h[1] \oplus \g \rightarrow {\chi}(\cM)$   a morphism of DGLAs with Poisson moment map\footnote{Many of the results
that follow do not require the existence of a moment map for $\psi$ at all, but just that
$\psi$ be a morphism of DGLAs with values in $\chi^{sympl}(\cM)$. We assume the existence of a Poisson moment map because it makes some results more explicit and simplifies the notation.}
 $(J_0,J_1)$. 
\end{itemize}

We denote by  $(\Gamma,\Omega)$ the source-simply connected  symplectic groupoid of the Poisson manifold $(M,\pi)$ \cite{CDW}.  In this Section we construct Lie algebra and Lie group actions on $\Gamma$, and their quotients will be the object of study of the next sections.\\

We adopt the   conventions of \cite{FOR} for the Lie groupoid $\Gamma$: the source map $\bs$ and target map $\bt$ are such that $g,h\in \Gamma$ are composable if{f} $\bs(g)=\bt(h)$. For the Lie algebroid we use the identification  
\begin{equation}\label{identi}
(Ker\; \bs_*)|_M\cong T^*M
\end{equation}
 via $v \mapsto (i_v\Omega)|_{TM}$, hence we identify sections of $T^*M$ with right invariant vector fields on $\Gamma$. Finally, the source map $\bs \colon \Gamma \rightarrow M$ is a Poisson map and the target $\bt$ is an anti-Poisson map.

\subsection{Lie algebra actions}

The infinitesimal action $\hat{\psi}$   (see Remark \ref{gmVB}) does not act by
infinitesimal Lie algebroid automorphisms of $T^*M$. Nevertheless, we can associate to it an infinitesimal action on the Lie groupoid $\Gamma$.

$\hat{\psi}$ maps  $w\in \h$ to $-d({J_0}^*w)\in  \Gamma(T^*M)$, which using \eqref{identi}   can be extended to a unique right-invariant vector field on $\Gamma$ (it is just the hamiltonian vector field  $X_{-\bt^*{J_0}^*w}^{\Gamma}$).

$\hat{\psi}$ maps $v\in \g$ to the cotangent lift of ${J_1}^*v\in \chi(M)$, which, as 
 an infinitesimal Lie algebroid automorphism of $T^*M$, by functoriality gives rise to a multiplicative vector field on $\Gamma$, which we denote by $({J_1}^*v)^{\Gamma}$.
  
Hence we obtain a \emph{linear} map $\h \times \g \rightarrow \chi(\Gamma)$, and it is natural to wonder if there is a Lie algebra structure on the domain that makes this into a Lie algebra morphism. 
To this aim recall that the DGLA $\h[1]\oplus \g$ corresponds to a 
crossed modules of Lie algebras (see Appendix \ref{A}), so in particular $\h$ is given a (non-trivial) Lie algebra structure $[\cdot,\cdot]_{\delta}$ and $\g$ acts on $\h$ by derivations of the Lie bracket. Hence we can construct the semidirect product  Lie algebra \cite[Sect. 3.14]{varLG}
$\h\rtimes \g$ by the action of $(\g,[\cdot,\cdot])$ on $(\h,[\cdot,\cdot]_{\delta})$. Explicitly the  Lie bracket on $\h\rtimes \g$ is given by $$[(w_1,v_1),(w_2,v_2)]_{\h\rtimes \g}=([\delta w_1,w_2]+[v_1, w_2]-
[v_2, w_1]\;,\; [v_1,v_2]).$$


\begin{prop}\label{infaction} If $\psi$ is a morphism of DGLAs then

\begin{center}
\fbox{\begin{Beqnarray*}
 \phi: \h\rtimes \g &\rightarrow& \chi^{sympl}(\Gamma)\\
w+v &\mapsto& X_{-\bt^*{J_0}^*w}^{\Gamma}+({J_1}^*v)^{\Gamma}
\end{Beqnarray*}}
\end{center}
is a morphism of Lie algebras. 
\end{prop}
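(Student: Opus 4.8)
The plan is to verify directly that $\phi$ intertwines the bracket $[\cdot,\cdot]_{\h\rtimes\g}$ with the commutator of vector fields on $\Gamma$. Write $\phi(w,v)=\phi(w)+\phi(v)$, where $\phi(w):=X^\Gamma_{-\bt^*J_0^*w}$ is hamiltonian (hence symplectic) and $\phi(v):=(J_1^*v)^\Gamma$ is symplectic because its base field $J_1^*v$ is a Poisson vector field (Lemma \ref{differ} 2). By bilinearity of both brackets, the morphism property reduces to three identities, which must reproduce respectively the terms $[\delta w_1,w_2]$, $[v_1,v_2]$, and $[v_1,w_2]-[v_2,w_1]$ appearing in the semidirect-product bracket.

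For the $\h$--$\h$ part both fields are hamiltonian, so $[\phi(w_1),\phi(w_2)]=X^\Gamma_{\{-\bt^*J_0^*w_1,\,-\bt^*J_0^*w_2\}_\Gamma}$. Since $\bt$ is anti-Poisson and $J_0$ is a Poisson map (Lemma \ref{mopois} together with the remark that a DGLA morphism forces $J_0$ Poisson), the inner bracket equals $-\bt^*J_0^*[\delta w_1,w_2]$, giving exactly $\phi([\delta w_1,w_2])$. For the $\g$--$\g$ part I would observe that $v\mapsto(J_1^*v)^\Gamma$ is the composition $\g\xrightarrow{J_1^*}\chi(M)\to\{\text{infinitesimal automorphisms of }T^*M\}\to\chi_{\mathrm{mult}}(\Gamma)$, each arrow being a Lie-algebra morphism: $J_1^*$ is an action, the cotangent lift of a Poisson vector field is an infinitesimal automorphism of the cotangent Lie algebroid, and integration to the source-simply-connected $\Gamma$ is functorial. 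Hence $[\phi(v_1),\phi(v_2)]=(J_1^*[v_1,v_2])^\Gamma=\phi([v_1,v_2])$.

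The mixed bracket is the crux. Here I would use that a symplectic vector field $Z$ satisfies $[Z,X^\Gamma_f]=X^\Gamma_{Z(f)}$ for any $f$. Taking $Z=(J_1^*v)^\Gamma$ and $f=-\bt^*J_0^*w$, and using that $(J_1^*v)^\Gamma$, being multiplicative and covering $J_1^*v$ on $M$, is $\bt$-related to $J_1^*v$, I obtain $(J_1^*v)^\Gamma(\bt^*J_0^*w)=\bt^*\big((J_1^*v)(J_0^*w)\big)$. The $\g$-equivariance of $J_0$ (Lemma \ref{poismap} 2) converts $(J_1^*v)(J_0^*w)$ into $J_0^*[v,w]$, so $[\phi(v),\phi(w)]=X^\Gamma_{-\bt^*J_0^*[v,w]}=\phi([v,w])$; antisymmetry then yields $[\phi(w_1),\phi(v_2)]=-\phi([v_2,w_1])$. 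Assembling the three computations reproduces precisely $\phi$ applied to the semidirect-product bracket.

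The main obstacle is the mixed bracket: it requires knowing that $(J_1^*v)^\Gamma$ is symplectic and $\bt$-related to $J_1^*v$, which rests on the precise way a Poisson infinitesimal automorphism of $M$ lifts through the cotangent Lie algebroid to a multiplicative symplectic vector field on $\Gamma$. One must also keep the sign conventions for $i_{X_f}\Omega$, for $[X_f,X_g]=X_{\{f,g\}}$, and for the identification $(\mathrm{Ker}\,\bs_*)|_M\cong T^*M$ of \eqref{identi} mutually consistent. Everything else is bookkeeping with the identities already recorded in Lemmas \ref{poismap}, \ref{differ} and \ref{mopois}.
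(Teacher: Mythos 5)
Your proposal is correct and follows essentially the same route as the paper: split the bracket into the $\h$--$\h$, $\g$--$\g$ and mixed cases, handle the first via hamiltonian vector fields and the anti-Poisson property of $\bt$ (your appeal to Lemma \ref{mopois} is just a repackaging of the paper's computation \eqref{hh}), the second by functoriality of the lift to $\Gamma$, and the mixed case via $[Z,X^\Gamma_f]=X^\Gamma_{Z(f)}$ for symplectic $Z$, the $\bt$-relatedness of $({J_1}^*v)^{\Gamma}$ to ${J_1}^*v$, and the $\g$-equivariance of $J_0$ from Lemma \ref{poismap}. No gaps to report.
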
 

\begin{proof}
Since $\g$ acts on $M$ by Poissonomorphisms (Lemma \ref{differ} 2)), by functoriality it is clear that it acts on $\Gamma$ too (by symplectomorphisms), i.e., that $\phi|_{\g}$ is a Lie algebra morphism. 

We  show now that $\phi|_{\h}$ is a Lie algebra morphism. 
Since $\bt$ is an anti-Poisson map, this follows from 
\begin{equation}\label{hh}
\{{J_0}^*w_1,{J_0}^*w_2\}=X^{M}_{{J_0}^*w_1}({J_0}^*w_2)=({J_1}^*(\delta w_1)){J_0}^*w_2= {J_0}^*[\delta w_1, w_2]={J_0}^*[(w_1,0),(w_2,0)]_{\h\rtimes \g}
\end{equation}
where the second equality holds by  Lemma \ref{differ} 1) and the third equality holds by   Lemma 
\ref{poismap} 2).

We are left with showing that  
$[\phi(w),\phi(v)] =\phi[w,v]_{\h\rtimes \g}$ for $w\in \h$ and $v\in \g$.
We have
\begin{equation}\label{hg}
({J_1}^*v)^{\Gamma}(\bt^*{J_0}^*w)=\bt^*(({J_1}^*v){J_0}^*w)=\bt^*{J_0}^*[v, w]
 =-\bt^*{J_0}^*[(w,0),(0,v)]_{\h\rtimes \g},
 \end{equation}
the first equality because $({J_1}^*v)^{\Gamma}$ $\bt$-projects to ${J_1}^*v$ and the second by   Lemma 
\ref{poismap} 2).
So
$$[X_{-\bt^*{J_0}^*w}^{\Gamma},({J_1}^*v)^{\Gamma}]=X^{\Gamma}_{({J_1}^*v)^{\Gamma}(\bt^*{J_0}^*w)}=
X^{\Gamma}_{-\bt^*{J_0}^*[(w,0),(0,v)]_{\h\rtimes \g}}=\phi([(w,0),(0,v)]_{\h\rtimes \g})$$ where we used \eqref{hg} in the second equality.
\end{proof}

\subsection{Lie group actions}\label{defPhi}
Let $G$ and $H$ be the simply connected Lie groups integrating $(\g, [\cdot,\cdot])$ and $(\h, [\cdot,\cdot]_{\delta})$ respectively. The $\g$-module structure on $\h$ integrates to  a left action $\varphi$ of $G$ on $H$ by group automorphisms. Hence we can construct the \emph{semi-direct product} $H \rtimes G$, 
which is the simply connected Lie group integrating the Lie algebra $\h \rtimes \g$ defined just before Prop. \ref{infaction}  \cite[Sect. 3.15]{varLG}.
The group multiplication on $H \rtimes G$ is given by
\begin{equation}\label{semdirgr}
(h_1,g_1)\cdot(h_2,g_2)=(h_1\cdot \varphi(g_1)h_2,g_1\cdot g_2).
\end{equation}
Notice that we can decompose elements of $H \rtimes G$ as
\begin{equation} \label{semdirgr2}
(h,g)=(h,e)\cdot(e,g)=(e,g)\cdot(\varphi(g^{-1})h,e)
\end{equation}
(where $e$  denotes the identity element of the group $H$ or $G$).

Hence the infinitesimal action $\phi$ defined in Prop. \ref{infaction} -- which we assume to be complete -- integrates to a left\footnote{The action $\Phi$ satisfies 
$\frac{d}{dt}|_0\Phi(exp(tv),x)=-(\phi(v))|_x$ for all $v\in \h \rtimes \g$ and $x\in \Gamma$.}
group action of $H \rtimes G$ on $\Gamma$:
\begin{equation*}
\boxed{\Phi \colon (H \rtimes G) \times \Gamma \rightarrow \Gamma.}
\end{equation*}

The vector fields $X_{-\bt^*{J_0}^*w}^{\Gamma}$ on $\Gamma$ appearing in the infinitesimal action $\phi$ are not multiplicative vector fields. Hence the group action $\Phi$ does \emph{not} act by  groupoid automorphisms of $\Gamma$, i.e., 
$kx \circ ky$ will usually not agree with $k(x \circ y)$ for all $k\in H \rtimes G$, where $\circ$ denotes the groupoid multiplication on $\Gamma$.  
The compatibility condition between the action $\Phi$ and the multiplication $\circ$ on $\Gamma$ involves the group structure on $H \rtimes G$, as well as the group morphism
$\partial \colon H \rightarrow G$ obtained integrating $\delta \colon \h \rightarrow \g$. It is the following (see Thm. \ref{catgroupoact} for a categorical interpretation).

\begin{prop}\label{kxky} Assume that the $G$-action on $M$ is free. 
Let $x,y\in \Gamma$ be composable elements (i.e., $x \circ y$ exists) and let $k_1=(h_1,g_1),k_2=(h_2,g_2)$ be elements of $H \rtimes G$.

The elements  $k_1x$ and $k_2y$ are composable if{f} $g_1=(\partial h_2) g_2$.
In that case $$k_1x \circ k_2y= (h_1h_2,g_2)(x \circ y).$$
\end{prop}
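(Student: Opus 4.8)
The plan is to make the action $\Phi$ of $H\rtimes G$ completely explicit and then reduce both assertions to a single compatibility identity between its two ``factors''. Using $(h,g)=(h,e)\cdot(e,g)$, write $\Theta_g:=\Phi_{(e,g)}$ and $\Phi_{(h,e)}$ for the two corresponding sub-actions. Since the vector fields $(J_1^*v)^\Gamma$ are multiplicative, functoriality of the source-simply-connected groupoid $\Gamma$ shows that each $\Theta_g$ is a \emph{groupoid automorphism} covering the $G$-action $m\mapsto g\cdot m$ on $M$. On the other hand, $\Phi_{(h,e)}$ is the flow of the \emph{right-invariant} Hamiltonian vector fields $X^\Gamma_{-\bt^*J_0^*w}$; a right-invariant vector field integrates to a left translation $L_{b(h)}$ by a bisection $b(h)$, and, because $\bt_*X^\Gamma_{-\bt^*J_0^*w}=X^M_{J_0^*w}=J_1^*(\delta w)$ by the moment-map condition (Lemma~\ref{differ}~1)), this bisection covers the base map $m\mapsto\partial(h)\cdot m$ determined by $\partial\colon H\to G$ and the $G$-action, i.e. $\bs\circ b(h)=\mathrm{id}$ and $\bt(b(h)(m))=\partial(h)\cdot m$. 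From $\Phi_{(h,e)}\circ\Phi_{(e,g)}=\Phi_{(h,g)}$ one then gets $\Phi_{(h,g)}(x)=b(h)(g\cdot\bt(x))\circ\Theta_g(x)$, and from the group law of $H$ that $h\mapsto b(h)$ is a homomorphism into the bisection group, with
\[
b(h_1h_2)(m)=b(h_1)(\partial(h_2)\cdot m)\circ b(h_2)(m).
\]

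For composability, given composable $x,y$ (so $\bs(x)=\bt(y)$) I would read off $\bs(k_1x)=g_1\cdot\bs(x)$ and $\bt(k_2y)=(\partial(h_2)g_2)\cdot\bt(y)=(\partial(h_2)g_2)\cdot\bs(x)$; hence $\bs(k_1x)=\bt(k_2y)$ amounts to $g_1\cdot\bs(x)=(\partial(h_2)g_2)\cdot\bs(x)$, which, as the $G$-action on $M$ is free, holds if{f} $g_1=\partial(h_2)g_2$. For the product formula I assume $g_1=\partial(h_2)g_2$ and factor $k_1=(h_1,e)\cdot(e,\partial h_2)\cdot(e,g_2)$ and $k_2=(h_2,e)\cdot(e,g_2)$. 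Putting $x':=\Theta_{g_2}(x)$ and $y':=\Theta_{g_2}(y)$ (still composable, with $x'\circ y'=\Theta_{g_2}(x\circ y)$ since $\Theta_{g_2}$ is an automorphism) the claim becomes
\[
\bigl[\Phi_{(h_1,e)}\Theta_{\partial h_2}(x')\bigr]\circ\bigl[\Phi_{(h_2,e)}(y')\bigr]=\Phi_{(h_1h_2,e)}(x'\circ y').
\]
Writing both sides with $L_{b(\cdot)}$, using multiplicativity of $b$ together with $\bt(y')=\bs(x')$, and then left-cancelling the common leftmost factor $b(h_1)(\partial(h_2)\cdot\bt(x'))$ and right-cancelling $y'$, this collapses to
\[
\Theta_{\partial h_2}(x')\circ b(h_2)(\bs(x'))=b(h_2)(\bt(x'))\circ x',
\]
that is, conjugation of $x'$ by the bisection $b(h_2)$ equals $\Theta_{\partial h_2}(x')$.

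The main obstacle is exactly this last identity, which I would isolate as a lemma: for every $h\in H$ the inner automorphism $I_{b(h)}\colon x\mapsto b(h)(\bt(x))\circ x\circ b(h)(\bs(x))^{-1}$ equals the groupoid automorphism $\Theta_{\partial h}$. Both are automorphisms of the source-simply-connected $\Gamma$ covering the same base map $m\mapsto\partial(h)\cdot m$, and both are homomorphic in $h$ (for $I_{b(h)}$ this uses that $b$ is a homomorphism), so by rigidity of morphisms out of a source-simply-connected groupoid it suffices to compare infinitesimal generators. There I would invoke the standard description of the infinitesimally multiplicative vector field on $\Gamma$ associated with a Hamiltonian vector field $X^M_f$, namely the difference $\overrightarrow{df}-\overleftarrow{df}$ of the right- and left-invariant vector fields attached to $df$: the generator of $t\mapsto I_{b(\exp tw)}$ is precisely such a difference for $f=J_0^*w$, while the generator of $\Theta_{\exp(t\delta w)}$ is $(J_1^*(\delta w))^\Gamma=(X^M_{J_0^*w})^\Gamma$ by the moment-map condition, so the two coincide. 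The delicate points are the bookkeeping of left- versus right-invariant vector fields (and the attendant signs, which must be pinned so that $b(h)$ really covers $\partial(h)\cdot$ rather than its inverse, as forced by the statement) and the appeal to uniqueness of integration; a useful consistency check is that for $\delta w=0$ both generators vanish, since then $d(J_0^*w)$ is a central section of $T^*M$ and $\overrightarrow{df}=\overleftarrow{df}$.
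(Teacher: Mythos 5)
Your proof is correct, and it reaches Prop.~\ref{kxky} by a route organized genuinely differently from the paper's. The paper (Section \ref{proofkxky}) proves two separate lemmas: Lemma \ref{kxy}, which identifies the action of $(h,e)$ with left multiplication by the arrow $(h,e)(\bt(x))$ (your $b(h)(\bt(x))$) and deduces $kx\circ y=k(x\circ y)$; and Lemma \ref{xky}, which, via Baker--Campbell--Hausdorff and Lemma \ref{camille}, identifies the generator of the $(h,\partial h^{-1})$-action with the \emph{left}-invariant field $X^{\Gamma}_{\bs^*{J_0}^*w}$ and runs an integral-curve uniqueness argument to get $x\circ ky=k(x\circ y)$; these are then combined through the factorization $k_i=(e,g_i)(\tilde h_i,e)$ and the crossed-module identities. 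You instead encode the $H$-part of $\Phi$ as a homomorphism $b$ into the bisection group of $\Gamma$ and, after the two cancellations, reduce the entire product formula to the single identity $I_{b(h)}=\Theta_{\partial h}$, which you prove by equating generators of one-parameter groups using Lemma \ref{camille} and the moment-map condition of Lemma \ref{differ} 1); your composability argument coincides with the paper's (eqs.~\eqref{source}, \eqref{target}, plus freeness). Your key identity is in fact equivalent, given the bisection description of the $(h,e)$-action, to the paper's Lemma \ref{xky}, but the proofs are different computations: the paper compares two explicit integral curves of $X^{\Gamma}_{\bs^*{J_0}^*w}$, while you compute the generator of a conjugation. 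What your packaging buys is conceptual clarity: $I_{b(h)}=\Theta_{\partial h}$ together with the homomorphism property of $b$ says precisely that $(b,\Theta)$ is a morphism of crossed modules into the pair (bisections, automorphisms) of $\Gamma$, which is the crossed-module shadow of the Lie 2-group action of Thm.~\ref{catgroupoact} and of the perspective of \cite{ZZL}; the paper's route is more elementary and self-contained, avoiding the bisection formalism. Two small repairs would make your write-up airtight: the appeal to ``rigidity of morphisms out of a source-simply-connected groupoid'' is unnecessary and slightly misplaced --- equality of the generators of the one-parameter groups $t\mapsto I_{b(\exp(tw))}$ and $t\mapsto\Theta_{\exp(t\delta w)}$, together with connectedness of $H$ and the homomorphism properties, already forces $I_{b(h)}=\Theta_{\partial h}$ for all $h$, with no integrability theorem needed --- and you should note that $b(h)$ is a globally defined bisection because the infinitesimal action $\phi$ is assumed complete (Subsection \ref{defPhi}).
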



\subsection{Proof of proposition \ref{kxky}}\label{proofkxky}


We start considering the two special cases in which either $k_1$ or $k_2$ is the identity element of $H \rtimes G$. 
\begin{lemma}\label{kxy}
Let $x,y\in \Gamma$ be composable  and let $k\in H \rtimes G$ so that
  $kx$ and $y$ are composable.  Then
  $kx \circ y= k(x \circ y)$. 
\end{lemma}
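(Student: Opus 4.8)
The plan is to reduce the general element $k=(h,g)\in H\rtimes G$ to the case $k\in H$ by combining the composability hypothesis with the freeness of the $G$-action on $M$, and then to exploit the right-invariance of the vector fields generating the $H$-action.

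First I would track how $\Phi$ interacts with the source map $\bs$. Writing $k=(h,e)\cdot(e,g)$ and using that $\Phi$ is a left action, $\Phi(k,\cdot)=\Phi\big((h,e),\Phi((e,g),\cdot)\big)$. The factor $(e,g)$ acts through the multiplicative vector fields $({J_1}^*v)^{\Gamma}$ of Prop.~\ref{infaction}, hence by groupoid automorphisms covering the $G$-action on $M$; in particular $\bs(\Phi((e,g),x))=g\cdot\bs(x)$. The factor $(h,e)$ acts through flows of the fields $X^{\Gamma}_{-\bt^*{J_0}^*w}$, which by construction (via \eqref{identi}) are right-invariant and hence tangent to the $\bs$-fibres (right translation annihilates $\bs_*$, since $\bs\circ R_y$ is locally constant); therefore $\bs\circ\Phi((h,e),\cdot)=\bs$. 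Combining, $\bs(kx)=g\cdot\bs(x)$. Now composability of $kx$ and $y$ reads $\bs(kx)=\bt(y)$, while composability of $x$ and $y$ reads $\bs(x)=\bt(y)$, so $g\cdot\bs(x)=\bs(x)$; freeness of the $G$-action forces $g=e$, i.e.\ $k=(h,e)\in H$.

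It then remains to establish the identity for $k\in H$. The essential point is that each $X^{\Gamma}_{-\bt^*{J_0}^*w}$ is the right-invariant vector field attached to $-d({J_0}^*w)\in\Gamma(T^*M)$, and the flow $\varphi_t$ of a right-invariant field $X$ commutes with right translation $R_y\colon x\mapsto x\circ y$: indeed $t\mapsto\varphi_t(x)\circ y$ is an integral curve of $X$ through $x\circ y$, because $\tfrac{d}{dt}\big(\varphi_t(x)\circ y\big)=(R_y)_*X_{\varphi_t(x)}=X_{\varphi_t(x)\circ y}$, so uniqueness of integral curves yields $\varphi_t(x\circ y)=\varphi_t(x)\circ y$. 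Since $H$ is connected, every $h\in H$ is a product of exponentials, so $\Phi((h,e),\cdot)$ is a composition of such flows, each commuting with $R_y$; hence $\Phi((h,e),\cdot)$ commutes with $R_y$, which is exactly $k(x\circ y)=(kx)\circ y$.

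The step requiring the most care is the reduction itself: one must check that the composability constraint, in the presence of freeness, genuinely annihilates the $G$-component of $k$. This is indispensable, for a $g$ merely fixing $\bs(x)$ need not fix $y$; then $(e,g)$, acting by a groupoid automorphism, would produce $(kx)\circ(ky)$ rather than $(kx)\circ y$, and the identity would fail. The remaining ingredients — that flows of right-invariant vector fields commute with right translation, and that right-invariant fields are tangent to the source fibres — are standard, but I would spell them out to keep the argument self-contained.
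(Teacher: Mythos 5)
Your proof is correct and follows essentially the same route as the paper's: the same reduction of $k=(h,g)$ to $(h,e)$ via the computation $\bs(kx)=g\,\bs(x)$ together with freeness of the $G$-action, followed by the case $h=\exp(w)$ and an extension to general $h\in H$ written as a product of exponentials. The only (cosmetic) difference lies in the exponential step: the paper realizes the action of $(h,e)$ as left groupoid multiplication by $z=(h,e)(\bt(x))$ and then invokes associativity, whereas you show directly that the flow of the right-invariant vector field commutes with right translation $R_y$ by uniqueness of integral curves --- two equivalent packagings of the same right-invariance fact.
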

\begin{proof} Let $k=(h,g)$. 
We claim that the  composability assumptions on the pairs $(x,y)$
and $(kx,y)$ imply that $k$ is of the form $(h,e)$.
Indeed 
\begin{equation}\label{source}
\bs(kx)=\bs[(h,e)(e,g)x]=\bs[(e,g)x]=g\bs(x),
\end{equation}
where in the second equality we used  the fact that the vector fields $X_{-\bt^*{J_0}^*w}$ are tangent to the $\bs$-fibers,
 and in the last equality that $G$ acts by groupoid automorphisms of $\Gamma$. Since $\bs(kx)=\bt(y)=\bs(x)$, from the freeness of the $G$-action on $M$ at $\bs(x)$ we conclude that $g=e$, proving our claim.
 
Assume first that $h=exp(w)$ for some $w\in \h$. The diffeomorphism of $\Gamma$ induced by the action of  $k=(h,e)$ is the time-$1$ flow of  $X_{\bt^*{J_0}^*w}^{\Gamma}$.
Writing $z:=(h,e)(\bt(x))\in \Gamma$ we have
\begin{equation}\label{zx}
z\circ x=(h,e)x,
\end{equation}
since the groupoid action of $\Gamma$ on itself by \emph{left} multiplication is generated by \emph{right} invariant vector fields (such as $X_{\bt^*{J_0}^*w}^{\Gamma}$).
%
 Hence 
 \begin{equation}\label{case1}
 kx\circ y=(z\circ x)\circ y=z\circ (x\circ y)=k(x\circ y),
\end{equation}
 where the first equality uses \eqref{zx} and the last one uses \eqref{zx} applied to $x\circ y$.
 
To conclude we consider the case  $k=(h,e)$ for $h$ a general element of $H$. Write $h=h_1\cdots h_n$ where the $h_i$ are in the image of the exponential map of $\h$. The conclusion follows from $$kx \circ y=k_1x^{\prime}\circ y= k_1(x^{\prime}\circ y)=\cdots=k(x\circ y)$$
where $k_i:=(h_i,e)$ and $x^{\prime}:=k_2\cdots k_n x$. In the second equality we used $\eqref{case1}$ (notice that $x^{\prime}$ and $y$ are composable).
 \end{proof}

\begin{lemma}\label{xky}
Let $x,y\in \Gamma$ be composable  and  $k=(h,g)\in H \rtimes G$. Then
  $x$ and $ky$ are composable if{f} $k$ is of the form $(h,\partial h^{-1})$. In this case
  $x \circ ky= k(x \circ y)$. 
\end{lemma}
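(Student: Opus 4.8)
The plan is to treat the two assertions separately: first the composability criterion, then, assuming $k=(h,\partial h^{-1})$, the identity $x\circ ky=k(x\circ y)$. Throughout I would use the freeness of the $G$-action on $M$ (assumed in Prop.~\ref{kxky}) and the decomposition \eqref{semdirgr2}, writing $k=(e,g)\cdot(h',e)$ with $h':=\varphi(g^{-1})h$, so that $ky=(e,g)\big((h',e)y\big)$. The factor $(e,g)$ acts by a groupoid automorphism, hence commutes with $\bt$ up to the induced $G$-action on $M$, while $(h',e)$ acts as in Lemma~\ref{kxy}, namely by left translation by the bisection $p\mapsto (h',e)u_p$ (writing $u_p$ for the unit over $p$).

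For the composability criterion I would compute $\bt(ky)$. Since the fields $X_{\bt^*J_0^*w}^{\Gamma}$ are tangent to the $\bs$-fibres and, through the anti-Poisson map $\bt$ and the moment-map relation $J_1^*(\delta w)=X_{J_0^*w}^M$ of Lemma~\ref{differ}~1), $\bt$-project to the infinitesimal $\g$-action by $\delta w$, the bisection $p\mapsto(h',e)u_p$ has source $p$ and target $\partial(h')\cdot p$; hence $\bt\big((h',e)y\big)=\partial(h')\cdot\bt(y)$ (for general $h'$ after writing it as a product of exponentials). Applying the automorphism $(e,g)$ yields $\bt(ky)=\big(g\,\partial(h')\big)\cdot\bt(y)$, and the $\partial$-equivariance $\partial(\varphi(g^{-1})h)=g^{-1}\,\partial(h)\,g$ (Appendix~\ref{A}) simplifies this to $\bt(ky)=\big(\partial(h)\,g\big)\cdot\bt(y)$. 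As $x,y$ are composable we have $\bs(x)=\bt(y)$, so $x,ky$ are composable iff $\big(\partial(h)g\big)\cdot\bt(y)=\bt(y)$, which by freeness holds iff $\partial(h)g=e$, i.e.\ iff $g=\partial h^{-1}$.

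For the identity I would exploit that, when $g=\partial h^{-1}$, the action is by right translations. For $h=\exp(w)$ the curve $t\mapsto k(t):=(\exp tw,\exp(-t\,\delta w))$ is a one-parameter subgroup of $H\rtimes G$ with generator $(w,-\delta w)$; its homomorphism property uses only that $\varphi(\partial a)$ is conjugation by $a$ (the Peiffer identity, Appendix~\ref{A}). By the footnote convention for $\Phi$, the action of $k(t)$ is the flow $\Xi_t$ of $-\phi(w,-\delta w)=X_{\bt^*J_0^*w}^{\Gamma}+\big(J_1^*(\delta w)\big)^{\Gamma}$. Using $J_1^*(\delta w)=X_{J_0^*w}^M$ together with the standard description of the multiplicative lift of a Hamiltonian field on a symplectic groupoid, $\big(X_f^M\big)^{\Gamma}=X_{\bs^*f}^{\Gamma}-X_{\bt^*f}^{\Gamma}$ (valid since $\bs$ is Poisson and $\bt$ anti-Poisson), this combination collapses to $\Xi_t$ being the flow of the single field $V:=X_{\bs^*J_0^*w}^{\Gamma}$. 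Since $V$ is left-invariant (tangent to the $\bt$-fibres), its flow is right translation, $\Xi_t(z)=z\circ\Xi_t(u_{\bs z})$; hence for composable $x,y$ (note $\bt$ is preserved, so $x$ stays composable with $\Xi_t(y)$) one gets $x\circ\Xi_t(y)=(x\circ y)\circ\Xi_t(u_{\bs y})=\Xi_t(x\circ y)$, the desired identity for $h=\exp(w)$.

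The main obstacle I anticipate is exactly the identification $V=X_{\bs^*J_0^*w}^{\Gamma}$: the $\h$-direction field $X_{\bt^*J_0^*w}^{\Gamma}$ is right-invariant, yet it must conspire with the multiplicative $\g$-direction field $\big(J_1^*(\delta w)\big)^{\Gamma}$ — via the moment-map condition — to produce a left-invariant field, and getting every sign right (as forced by the anti-Poisson-ness of $\bt$ and the $\Phi$-convention) is the delicate point; it is also the conceptual heart, since left-invariance is precisely what makes ``moving the right factor'' compatible with composition. Finally, to pass from $h=\exp(w)$ to arbitrary $h$, I would write $h=\exp(w_1)\cdots\exp(w_n)$ and use that $h\mapsto(h,\partial h^{-1})$ reverses products, so $k=k(h_n)\cdots k(h_1)$; as each $k(h_j)$ preserves $\bt$, the element $x$ remains composable with every partial image $k(h_j)\cdots k(h_1)y$, and iterating the single-step identity gives $x\circ ky=k(x\circ y)$ in general.
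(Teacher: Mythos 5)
Your proposal is correct and follows essentially the same route as the paper's proof: the composability criterion via $\bt(ky)=(\partial h)g\cdot\bt(y)$ (using that $\bt$ intertwines the $H$-action with the $\partial H$-action, plus freeness), the key collapse $X_{\bt^*{J_0}^*w}^{\Gamma}+({J_1}^*\delta w)^{\Gamma}=X_{\bs^*{J_0}^*w}^{\Gamma}$ via Lemma \ref{differ} 1) and Lemma \ref{camille}, left-invariance of that field to move it past $x$, and induction over products of exponentials using that $h\mapsto(h,\partial h^{-1})$ reverses products. The only (cosmetic) differences are that you use the decomposition $(h,g)=(e,g)(\varphi(g^{-1})h,e)$ where the paper uses $(h,e)(e,g)$, you verify $k(t)$ is a one-parameter subgroup directly from the Peiffer identity where the paper invokes Baker-Campbell-Hausdorff, and you phrase the final step as ``the flow of a left-invariant field is right translation'' where the paper compares two integral curves of the same field.
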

\begin{proof}  
We start claiming that the target map $\bt \colon \Gamma \rightarrow M$ intertwines the action of $h\in H$ on $\Gamma$ and the action of $\partial h\in G$ on $M$. 
Assume first that $h$ lies in the image of the exponential map of $\h$, i.e., that $h=\exp(w)$ for some $w\in \h$.
The action of $h$ on $\Gamma$ is obtained taking the  time-$1$ flow of the vector field  $X_{\bt^*{J_0}^*w}^{\Gamma}$.
We have $\bt_*(X_{\bt^*{J_0}^*w}^{\Gamma})=- X^{M}_{{J_0}^*w}=-J_1^*(\delta w)$ since $\bt$ is an anti-Poisson map and by Lemma \ref{differ} 1). Further
$\exp(\delta w)=\partial h$ since $\partial$ is the Lie group morphism integrating $\delta$, so the time-$1$ flow of $-J_1^*(\delta w)$ is given by the element $\partial h\in G$ under the action of $G$ on $M$, proving the claim for $h$ in the image of the exponential map. If $h$ is a general 
  element of $H$, write $h=h_1\cdots h_n$ where the $h_i$ are in the image of the exponential map of $\h$, and apply the above reasoning inductively starting from $h_1$.  

Therefore
\begin{equation}\label{target}
\bt(ky)=\bt((h,e)(e,g)y)= (\partial h) \bt((e,g)y)=(\partial h )g\bt(y)
\end{equation}
where in the last equality we used that $G$ acts by groupoid automorphisms.  The elements $x$ and $ky$ are composable if  $\bt(ky)$ agrees with $\bs(x)=\bt(y)$. Since the $G$-action on $M$ is free we conclude that this is equivalent to $(\partial h )g=e$, proving the first part of the lemma.

To show the second part of the lemma, assume first that $h=\exp(w)$ for some $w\in \h$.
On the Lie group $H \rtimes G$ we have $(h,\partial h^{-1})=(h,e)(e,\partial h^{-1})=\exp (w)\exp(-\delta w)=\exp(w - \delta w)$, where in the last equality we used the Baker-Campell-Hausdorff formula and the fact that $[(w,0),(0,\delta w)]_{\h \rtimes \g}=0$. 
Further
\begin{equation}\label{xgxsxt} 
-(\phi(w- \delta w)=
X_{\bt^*{J_0}^*w}^{\Gamma}+({J_1}^*\delta w)^{\Gamma}=
X_{\bt^*{J_0}^*w}^{\Gamma}+(X^{M}_{{J_0}^*w})^{\Gamma}=
X_{\bs^*{J_0}^*w}^{\Gamma}
\end{equation}
where we used 1) of Lemma \ref{differ} in the first equality and Lemma \ref{camille} below in the second.
Consider the path $h(t):=\exp(tw)$ in $H$ from $e$ to $h$, inducing the path  
$k(t):= (h(t), \partial h^{-1}(t))$ in $H \rtimes G$ and
\emph{two} paths in $\Gamma$: one is 
$\tau(t):=k(t)(x\circ y)$, the other $\gamma(t):=x\circ k(t)y$.
Using \eqref{xgxsxt} we see that their velocity at time $t$ is 
\begin{eqnarray}
\dot{\tau}(t)&=&(X_{\bs^*{J_0}^*w}^{\Gamma})|_{\tau(t)}\\
\dot{\gamma}(t)= (L_x)_*[(X_{\bs^*{J_0}^*w}^{\Gamma})|_{k(t)y}]&=&(X_{\bs^*{J_0}^*w}^{\Gamma})|_{\gamma(t)},
\end{eqnarray}
  where the last equality follows from the fact that $X_{\bs^*{J_0}^*w}^{\Gamma}$ is a left invariant vector field.
Hence $\gamma$ and $\tau$ are integral curves of the same vector field, and since they both start at  $(x\circ y)$ we conclude that 
\begin{equation}\label{case2}
x \circ ky=\gamma(1)=\tau(1)= k(x \circ y).
\end{equation}
Now assume that $k=(h,\partial h^{-1})$ for an arbitrary $h\in H$, and as above 
we write $h=h_1\cdots h_n$ for elements $h_i$ in the image of the exponential map.
We have $$(h,\partial h^{-1})=(h_n,\partial h_n^{-1})\cdots(h_1,\partial h_1^{-1})$$
using \eqref{xmgr},
hence the lemma is proven applying recursively\footnote{
Notice that the composability assumptions are satisfied since the inverse of
$(h_i,\partial h_i^{-1})$ is $(h_i^{-1},\partial h_i)$.} eq. \eqref{case2}.
\end{proof}

The following is a special case (with $G=\RR$) of Thm. 3.3 (ii) of \cite{FOR}.

\begin{lemma}\label{camille}
Given any function $f$ on a symplectic groupoid $\Gamma$ we have
\begin{equation}
(X^{M}_{f})^{\Gamma}=X_{\bs^*f}^{\Gamma}-X_{\bt^*f}^{\Gamma}.
\end{equation}
\end{lemma}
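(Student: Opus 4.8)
The plan is to identify the right-hand side $V:=X_{\bs^*f}^\Gamma-X_{\bt^*f}^\Gamma$ with the functorial multiplicative lift $(X^M_f)^\Gamma$ by checking that $V$ is multiplicative, covers $X^M_f$ on $M$, and induces on the Lie algebroid the same infinitesimal automorphism as $(X^M_f)^\Gamma$; uniqueness of the lift then forces equality. First I would record the two structural facts about $(\Gamma,\Omega)$ that do all the work. Since the $\bs$- and $\bt$-fibres of a symplectic groupoid are symplectically orthogonal, $\{\bs^*f,\bt^*g\}=0$ for all $f,g\in C^\infty(M)$; consequently $X_{\bt^*f}^\Gamma$ is tangent to the $\bs$-fibres and $X_{\bs^*f}^\Gamma$ is tangent to the $\bt$-fibres, and the two fields commute. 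Moreover $X_{\bt^*f}^\Gamma$ is right-invariant — the fact already used in the proof of Lemma~\ref{kxy}, a consequence of \eqref{identi} and the multiplicativity of $\Omega$ — and, symmetrically (apply the anti-symplectic inversion $\iota$, under which $\iota_*X_{\bt^*f}^\Gamma=-X_{\bs^*f}^\Gamma$ since $\bt^*f\circ\iota=\bs^*f$), $X_{\bs^*f}^\Gamma$ is left-invariant.

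Next I would compute how $V$ lies over $M$. As $\bs$ is Poisson, $X_{\bs^*f}^\Gamma$ is $\bs$-related to $X^M_f$, while $X_{\bt^*f}^\Gamma$ is $\bs$-vertical, so $\bs_*V=X^M_f$; dually, as $\bt$ is anti-Poisson, $X_{\bt^*f}^\Gamma$ is $\bt$-related to $-X^M_f$ and $X_{\bs^*f}^\Gamma$ is $\bt$-vertical, so $\bt_*V=X^M_f$ as well. Thus $V$ is $\bs$- and $\bt$-related to the single base field $X^M_f$, as any multiplicative vector field covering $X^M_f$ must be.

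To see that $V$ is multiplicative I would integrate it: since $X_{\bs^*f}^\Gamma$ and $X_{\bt^*f}^\Gamma$ commute, the flow of $V$ is $\exp(tX_{\bs^*f}^\Gamma)\circ\exp(-tX_{\bt^*f}^\Gamma)$. The flow of the right-invariant field $X_{\bt^*f}^\Gamma$ is left translation by the bisection $\sigma_t$ obtained by flowing the unit section, and, using $\iota_*X_{\bt^*f}^\Gamma=-X_{\bs^*f}^\Gamma$, the flow of the left-invariant field $X_{\bs^*f}^\Gamma$ is right translation by $\sigma_t^{-1}$; hence the flow of $V$ is conjugation $\mathrm{Ad}_{\sigma_t}$, a groupoid automorphism. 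So $V$ is a multiplicative (and, being a difference of Hamiltonian fields, symplectic) vector field.

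Finally I would match $V$ with $(X^M_f)^\Gamma$. Both are multiplicative vector fields covering $X^M_f$, so by source-simply-connectedness of $\Gamma$ it suffices to check they induce the same infinitesimal Lie algebroid automorphism of $A=(\ker\bs_*)|_M\cong T^*M$. Under \eqref{identi} the section $dg$ corresponds to the right-invariant field $X_{\bt^*g}^\Gamma$, and the induced derivation sends $dg$ to the class of $[V,X_{\bt^*g}^\Gamma]|_M$. Here $[X_{\bs^*f}^\Gamma,X_{\bt^*g}^\Gamma]=X_{\{\bs^*f,\bt^*g\}}^\Gamma=0$ and $[X_{\bt^*f}^\Gamma,X_{\bt^*g}^\Gamma]=X_{\{\bt^*f,\bt^*g\}}^\Gamma=-X_{\bt^*\{f,g\}_M}^\Gamma$, so $[V,X_{\bt^*g}^\Gamma]=X_{\bt^*\{f,g\}_M}^\Gamma$, i.e. $dg\mapsto d\{f,g\}_M=\cL_{X^M_f}(dg)$. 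This is exactly the cotangent action of $X^M_f$, which by construction is the automorphism induced by $(X^M_f)^\Gamma$; hence $V=(X^M_f)^\Gamma$. I expect this last step to be the main obstacle: a multiplicative vector field is not determined by its base field alone, so the real content is the uniqueness statement for the functorial lift together with the bracket computation identifying the induced algebroid data, whereas the structural and projection steps are routine bookkeeping with the symplectic groupoid axioms.
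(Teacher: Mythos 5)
Your proof is correct, but it is necessarily a different route from the paper's, because the paper does not actually prove Lemma~\ref{camille}: it cites it as the special case $G=\RR$ of Thm.~3.3(ii) of \cite{FOR}, where the lift of a Poisson action to the symplectic groupoid is shown to be Hamiltonian with momentum map given by a groupoid $1$-cocycle. Your argument --- show that $V:=X^{\Gamma}_{\bs^*f}-X^{\Gamma}_{\bt^*f}$ is multiplicative, that it covers $X^M_f$ under both $\bs_*$ and $\bt_*$, that it induces the derivation $\cL_{X^M_f}$ on $\Gamma(T^*M)$ under the identification \eqref{identi} (using $dg\leftrightarrow X^{\Gamma}_{\bt^*g}$), and then invoke uniqueness of the multiplicative vector field with prescribed infinitesimal data --- is sound, and all the structural inputs (symplectic orthogonality of the $\bs$- and $\bt$-fibres, right-invariance of $X^{\Gamma}_{\bt^*f}$, $\bs$ Poisson and $\bt$ anti-Poisson) are consistent with the paper's conventions. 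Two remarks. First, your flow-and-bisection argument for multiplicativity can be shortened and made sign-proof: the function $\bs^*f-\bt^*f$ is multiplicative (a groupoid $1$-cocycle, since $\bs(x)=\bt(y)$ for composable $x,y$), and the Hamiltonian vector field of a multiplicative function on a symplectic groupoid is automatically multiplicative; this sidesteps both the completeness of the flows and the convention-dependent identities ($R_{\sigma_t}$ versus $R_{\sigma_t^{-1}}$, $\mathrm{Ad}_{\sigma_t}$ versus $\mathrm{Ad}_{\sigma_t^{-1}}$), which in any case do not affect your conclusion. Incidentally, this cocycle point of view is essentially how \cite{FOR} proves the general statement. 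Second, the final uniqueness step needs only source-connectedness of $\Gamma$ (Lie~II uniqueness), though source-simple-connectedness is available here and is in fact what makes the functorial lift $(X^M_f)^{\Gamma}$ well defined in the first place. What your proof buys is a self-contained argument within the paper; what the citation buys is the more general statement of \cite{FOR} for arbitrary Poisson Lie group actions, of which this lemma is the one-parameter case.
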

 \begin{proof}[Proof of Prop. \ref{kxky}]
Recall from  \eqref{semdirgr2} that any element of  $H \rtimes G$ can be written as
$(h,g)=(e,g)(\tilde{h},e)$ where $\tilde{h}:=\varphi(g^{-1})h$ (and $e$ denotes the identity of $H$ or $G$). Assume first that $k_1x$ and $k_2 y$ are composable.  We have
 \begin{eqnarray}
 k_1x &\circ& k_2 y=\\
 \label{secline2}
(e,g_1)(\tilde{h}_1,e)x &\circ& (e,g_2)(\tilde{h}_2,e)y=\\
\label{secline3}(e,g_1)\Big[(\tilde{h}_1,e)x &\circ& \un{(e,g_1^{-1}g_2)(\tilde{h}_2,e)}y\Big]=\\
\label{secline4}(e,g_1) \un{(e,g_1^{-1}g_2)(\tilde{h}_2,e)}\Big[(\tilde{h}_1,e)x &\circ& y\Big]=\\
\label{secline5} (e,g_1) \un{(e,g_1^{-1}g_2)(\tilde{h}_2,e)}(\tilde{h}_1,e)\Big[x &\circ& y\Big]
\end{eqnarray}
where in the second equality we used that the $G$ action on $\Gamma$ is by groupoid automorphisms, in the third equality we applied the second part of Lemma \ref{xky} to the underlined term $k:=(e,g_1^{-1}g_2)(\tilde{h}_2,e)$ (notice that $(\tilde{h}_1,e)x$ and $y$ are composable),
and in the fourth equality we applied Lemma \ref{kxy} to $k':=(\tilde{h}_1,e)$.

The composability assumption on $k_1x$ and $k_2 y$ is clearly  equivalent to the composability of $(\tilde{h}_1,e)x$ and $ky$ (see \eqref{secline3}).
By the first part of Lemma \ref{xky} applied to \eqref{secline3}, this is equivalent to $k$ being of the form $(h,\partial h^{-1})$ for some $h\in H$. Now $k= 
(e,g_1^{-1}g_2)(\tilde{h}_2,e)=(\varphi(g_1^{-1})h_2,g_1^{-1}g_2)$, so it follows that 
$\partial [\varphi(g_1^{-1})h_2]=g_2^{-1}g_1$, which using  Def. \ref{xmgr}
means $\partial h_2^{-1}g_1=g_2$. 
We conclude that $k_1x$ and $k_2 y$ are composable if{f} $g_1=(\partial h_2) g_2$,
 proving the first part of the proposition.

Since $g_1=(\partial h_2) g_2$ we have $$\tilde{h}_1=\varphi(g_1^{-1})h_1=
\varphi(g_2^{-1})\varphi(\partial h_2^{-1})h_1= \varphi(g_2^{-1})(h_2^{-1}h_1h_2)$$
(the last equality  using Def. \ref{xmgr}). Using this and
 $\tilde{h}_2=\varphi(g_2^{-1})h_2$
we get $\tilde{h}_2 \tilde{h}_1=\varphi(g_2^{-1})(h_1h_2)$, and 
 we can simplify the four terms before the square bracket in  \eqref{secline5} to
$$ (e,g_2)(\tilde{h}_2\tilde{h}_1,e)=
(e,g_2)\cdot(\varphi(g_2^{-1})(h_1h_2),e)=(h_1h_2,g_2),
$$ 
finishing the proof of Prop. \ref{kxky}.
\end{proof}

\section{The global quotient of $\Gamma$}\label{sec:glq}

We assume the set-up of Section \ref{sec:groupGamma}.
The action $\Phi$ of $H \rtimes G$ on $\Gamma$ (see   
Subsection \ref{defPhi})  is \emph{not} by Lie groupoid automorphisms.
In this Section we show that, in spite of this, the quotient space has an induced Lie groupoid structure. Further we show that it is a  \emph{Poisson groupoid}, i.e.,  a Lie groupoid endowed with a Poisson structure for which the graph of the multiplication is coisotropic \cite{alancoiso}.

\begin{prop}\label{groidgl} Suppose that the $H \rtimes G$-action on $\Gamma$ is free and proper. Then
there is a   Poisson groupoid structure on 
 \[
\xymatrix{\Gamma/(H \rtimes G)  \ar[d]\ar[d]\ar@<-1ex>[d]\\ M/G}
\]
for which the natural projection $p \colon \Gamma \rightarrow \Gamma/(H \rtimes G)$ is a groupoid morphism and a Poisson map. 
\end{prop}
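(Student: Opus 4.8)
The plan is to descend the groupoid structure of $\Gamma$ along $p$, then the Poisson structure, and finally to verify the coisotropy of the graph of multiplication; the last point is where the real work lies.

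First, since $\Phi$ is free and proper, $\un{\Gamma}:=\Gamma/(H\rtimes G)$ is a smooth manifold and $p$ a surjective submersion, and a short argument with the unit section $\epsilon(M)\subset\Gamma$ shows that the induced $G$-action on $M$ is free and proper, so $\un{M}:=M/G$ is smooth. The computation \eqref{source}, $\bs((h,g)x)=g\,\bs(x)$, says that $\bs$ is equivariant along $(h,g)\mapsto g$ and hence descends to a submersion $\un{\bs}\colon\un{\Gamma}\to\un{M}$; likewise \eqref{target}, $\bt((h,g)x)=(\partial h)g\,\bt(x)$, gives $\un{\bt}\colon\un{\Gamma}\to\un{M}$. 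I would define the unit map by $[m]\mapsto[\epsilon(m)]$, which is well defined because $\epsilon(gm)=(e,g)\epsilon(m)$.

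Next, I define the multiplication by $[x]\circ[y]:=[x'\circ y']$ for composable representatives, which exist because $\un{\bs}[x]=\un{\bt}[y]$ forces $\bs(x),\bt(y)$ into one $G$-orbit, matched by acting with a suitable $(e,g)$. Independence of representatives is exactly Prop.~\ref{kxky}: if $x_2=k_1x_1$ and $y_2=k_2y_1$ are again composable, then $x_2\circ y_2=(h_1h_2,g_2)(x_1\circ y_1)$ lies in the orbit of $x_1\circ y_1$. The inverse descends as well, since Prop.~\ref{kxky} gives $((h,g)x)^{-1}=(h^{-1},(\partial h)g)\,x^{-1}$, so $[x]\mapsto[x^{-1}]$ is well defined. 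The groupoid axioms then follow from those on $\Gamma$ because, by construction, $p$ intertwines source, target, units, multiplication and inversion; this is precisely the assertion that $p$ is a groupoid morphism over $M\to\un{M}$. Smoothness of the descended structure maps is automatic, as they are induced from smooth maps by the submersion $p$ (every composable pair downstairs lifts to one upstairs). For the Poisson structure I use that, by Prop.~\ref{infaction}, the generators of $\phi$ lie in $\chi^{sympl}(\Gamma)$ and so are Poisson vector fields. Hence if $f,g\in C^{\infty}(\Gamma)$ are $H\rtimes G$-invariant so is $\{f,g\}_\Gamma$: for each generator $X$, $X\{f,g\}_\Gamma=\{Xf,g\}_\Gamma+\{f,Xg\}_\Gamma=0$, and $H\rtimes G$ is connected. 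Thus $C^{\infty}(\Gamma)^{H\rtimes G}=p^{*}C^{\infty}(\un{\Gamma})$ is a Poisson subalgebra, the bracket descends to a bivector $\pi_{\un{\Gamma}}$ on $\un{\Gamma}$, and $p$ is a Poisson map by construction.

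Finally, to obtain the Poisson groupoid property I must show that the graph $\Lambda_{\un{\Gamma}}=\{([x],[y],[x]\circ[y])\}$ is coisotropic in $\un{\Gamma}\times\un{\Gamma}\times\bar{\un{\Gamma}}$ \cite{alancoiso}. Since $(\Gamma,\Omega)$ is a symplectic groupoid, its graph $\Lambda_\Gamma$ is Lagrangian in the symplectic manifold $Z:=\Gamma\times\Gamma\times\bar{\Gamma}$, and $P:=p\times p\times p$ is a Poisson surjective submersion with $P(\Lambda_\Gamma)=\Lambda_{\un{\Gamma}}$. Put $\tilde{\Lambda}:=P^{-1}(\Lambda_{\un{\Gamma}})$, the $P$-saturation of $\Lambda_\Gamma$; its tangent space at $z\in\Lambda_\Gamma$ is $T_z\tilde{\Lambda}=T_z\Lambda_\Gamma+V_z$ with $V_z=\ker P_{*}|_z$ the orbit tangent. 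Because the action is symplectic and $\Lambda_\Gamma$ is Lagrangian,
\[
(T_z\tilde{\Lambda})^{\omega}=(T_z\Lambda_\Gamma)^{\omega}\cap V_z^{\omega}=T_z\Lambda_\Gamma\cap V_z^{\omega}\subset T_z\Lambda_\Gamma\subset T_z\tilde{\Lambda},
\]
so $\tilde{\Lambda}$ is coisotropic in $Z$. I then invoke the elementary fact that for a Poisson surjective submersion $P$ the preimage $P^{-1}(N)$ is coisotropic if{f} $N$ is (one uses $P_{*}\,\sharp\,P^{*}=\sharp$ and that the conormal of $P^{-1}(N)$ is $P^{*}$ of the conormal of $N$); applied to $\tilde{\Lambda}=P^{-1}(\Lambda_{\un{\Gamma}})$ this yields that $\Lambda_{\un{\Gamma}}$ is coisotropic. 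The main obstacle is precisely this last step: the graph $\Lambda_\Gamma$ itself is \emph{not} $P$-saturated, so one cannot push it forward directly, and the argument must pass through the saturation $\tilde{\Lambda}$, whose coisotropy rests on combining the Lagrangian condition with the symplectic nature of the action.
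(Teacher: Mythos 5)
Your proposal is correct, and its first two stages coincide with the paper's proof: you descend the groupoid structure exactly as the paper does, via \eqref{source}, \eqref{target} and Prop.~\ref{kxky} (your extra care with units and inverses, and your observation that freeness and properness of the $G$-action on $M$ follow from those of the $H\rtimes G$-action on $\Gamma$ through the unit section, is a genuine refinement, since freeness of the $G$-action on $M$ is a standing hypothesis of Prop.~\ref{kxky}); and you descend the Poisson bracket through the algebra of invariant functions, as the paper does. Where you genuinely diverge is the coisotropy of the multiplication graph. The paper argues ideal-theoretically: if $f_1,f_2$ vanish on $graph(m_{\Gamma/(H\rtimes G)})$, their pullbacks under the Poisson map $P=p\times p\times p$ vanish on the Lagrangian $graph(m_\Gamma)$, hence so does their bracket, which equals $P^*\{f_1,f_2\}$; surjectivity of $P|_{graph(m_\Gamma)}$ onto the graph downstairs then forces $\{f_1,f_2\}$ to vanish there. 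You instead pass through the saturation $\tilde{\Lambda}=P^{-1}(\Lambda_{\un{\Gamma}})$, prove it coisotropic by linear symplectic algebra (using $(A+B)^{\omega}=A^{\omega}\cap B^{\omega}$ and Lagrangianity), and then descend via the criterion that $P^{-1}(N)$ is coisotropic if and only if $N$ is, for a surjective Poisson submersion. Your route is geometrically transparent and isolates a reusable criterion, but it carries technical debts the paper's argument avoids: you need $\tilde{\Lambda}$ to be a smooth submanifold; you assert without proof the identity $T_z\tilde{\Lambda}=T_z\Lambda_\Gamma+V_z$, which amounts to $P|_{\Lambda_\Gamma}$ being a submersion onto $\Lambda_{\un{\Gamma}}$ (true --- one can check it by a dimension count on the stabilizer of $\Lambda_\Gamma$ inside $(H\rtimes G)^3$ using Prop.~\ref{kxky} --- but not free); and your displayed computation is carried out only at points $z\in\Lambda_\Gamma$, so propagating coisotropy to all of $\tilde{\Lambda}$ requires invoking that $(H\rtimes G)^3$ acts by symplectomorphisms --- this propagation step, rather than the displayed chain of equalities, is where the symplectic nature of the action is actually used, and you should say so explicitly. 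The paper's functional argument needs none of this machinery: only that $P$ is Poisson and maps $graph(m_\Gamma)$ onto the graph downstairs.
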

\begin{proof} We first show that there is an induced groupoid structure on the quotient
$\Gamma/(H \rtimes G)$. We have
$\bs[(H \rtimes G)x]\subset G\bs(x)$ and $\bt[(H \rtimes G)x]\subset G\bt(x)$ for each $x\in \Gamma$. Indeed by Prop.  \ref{infaction} the infinitesimal $\h \rtimes \g$-action  $\phi$ on $\Gamma$ is given by
$X_{-\bt^*{J_0}^*w}^{\Gamma}+({J_1}^*v)^{\Gamma}$, which under $\bs_*$ maps to 
${J_1}^*v$ and under $\bt^*$ maps to 
$X^{M}_{{J_0}^*w}+{J_1}^*v={J_1}^*(\delta w +v)$ (the last equality by Lemma \ref{differ} 1)). Hence
there are well-defined source and target maps of $\Gamma/(H \rtimes G)$   induced from those of $\Gamma$.  

We define the multiplication of composable elements $\un{x},\un{y}\in \Gamma/(H \rtimes G)$  as $\un{x}\circ \un{y}:=p(x\circ y)$, where $x,y\in \Gamma$ are  composable elements with $p(x)=\un{x}, p(y)=\un{y}$.
Since the fibers of $p$ are exactly the $H \rtimes G$ orbits, by Prop. \ref{kxky}
the product  $\un{x}\circ \un{y}$ is independent of the choice of the choice of 
 \emph{composable} lifts $x$ and $y$. The existence of the composable  lifts $x,y$   is clear: if $\tilde{x},y$ are arbitrary  lifts to $\Gamma$ of $\un{x},\un{y}$, the composability of $\un{x}$ and $\un{y}$ implies that there exists $g\in G$ such that $\bt(y)=g\cdot \bs(\tilde{x})=\bs(g\cdot \tilde{x})$, so just take $x:=g\cdot \tilde{x}$.

There is an induced Poisson structure on  $\Gamma/(H \rtimes G)$ for which the projection $p$ is a Poisson map: the infinitesimal action $\phi$ on $\Gamma$ is given by vector fields $X_{-\bt^*{J_0}^*w}^{\Gamma}$ and $({J_1}^*v)^{\Gamma}$, which are symplectic vector fields (indeed hamiltonian vector fields, see \eqref{Jiv}.)

At last we prove the compatibility of the groupoid and Poisson structure on the quotient. 
The fact that $\Gamma$ is a symplectic groupoid means that $(graph(m_{\Gamma}))\subset \Gamma\times\Gamma\times\bar{\Gamma}$ is a lagrangian (in particular coisotropic) submanifold,
where $m_{\Gamma}$ is the multiplication of $\Gamma$ and $\bar{\Gamma}$ denotes the groupoid $\Gamma$ endowed with the negative of its symplectic form. 
The projection $p\times  p\times p$ is  a Poisson map  and maps $graph (m_{\Gamma})$ surjectively onto $graph  (m_{\Gamma/(H \rtimes G)})$. Now if $f_1,f_2$ are functions vanishing on  $graph (m_{\Gamma/(H \rtimes G)})$ then the pullbacks $(p\times  p\times p)^{-1}f_i$ to $ \Gamma\times\Gamma\times\bar{\Gamma}$
are functions which vanish on $graph (m_{\Gamma})$. Therefore their Poisson bracket also vanishes on $graph (m_{\Gamma/(H \rtimes G)})$, and hence $\{f_1,f_2\}$ vanishes on 
$graph (m_{\Gamma/(H \rtimes G)})$. This shows that  
$graph (m_{\Gamma/(H \rtimes G)})$ is a coisotropic submanifold of the corresponding triple product, i.e., that  $ {\Gamma/(H \rtimes G)  \rightrightarrows M/G}$
is a Poisson groupoid.
\end{proof}
 
We compare the global quotients of $\Gamma$ (a Poisson groupoid) with 
the global quotient of $T^*[1]M$ (which corresponds to a Lie bialgebroid). Recall that 
the infinitesimal counterpart of a Poisson groupoid is a Lie bialgebroid \cite{MK2}. 
\begin{prop}
The Lie bialgebroid structure on   $D/G$ induced by the global quotient of $T^*[1]M$ (as in Prop. \ref{glocM}) is exactly the infinitesimal counterpart of the the global quotient of  $\Gamma$ (as in Prop. \ref{gloGamma}). 
\end{prop}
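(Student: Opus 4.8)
The plan is to exploit the functoriality of Lie differentiation together with the dictionary of Remark~\ref{bial}, which says that a Lie bialgebroid is the same datum as a degree $1$ N-manifold of the form $V[1]$ carrying a degree $-1$ Poisson structure and a compatible (Poisson) homological vector field. Concretely, for the symplectic groupoid $(\Gamma,\Omega)$ the associated Lie bialgebroid is exactly $\cM=T^*[1]M$ equipped with the symplectic degree $-1$ bracket (which encodes $A(\Gamma)^*=TM$) and the homological vector field $X_{\cS}$ (which, by Prop.~\ref{dimitri}, encodes $A(\Gamma)=T^*M$, the cotangent Lie algebroid of $(M,\pi)$); this is the graded reformulation of the correspondence of \cite{Ping}. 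Thus both objects in the statement are, by construction, presented as such graded data, and it suffices to check that the three pieces produced by the two quotient procedures agree: the graded manifold, the homological vector field, and the degree $-1$ Poisson bracket.

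First I would invoke Prop.~\ref{groidgl}: the projection $p\colon\Gamma\to\Gamma/(H\rtimes G)$ is a groupoid morphism, a Poisson map, and (the action being free and proper) a surjective submersion. By functoriality of Lie differentiation it therefore induces a fibrewise surjective morphism of Lie bialgebroids
$$A(p)\colon A(\Gamma)[1]=\cM\longrightarrow A(\Gamma/(H\rtimes G))[1]$$
over $M\to M/G$, intertwining the homological vector fields and the degree $-1$ Poisson structures. The next step is to identify this morphism explicitly. Using the identification \eqref{identi}, the $\h$-generators $\phi(w)=X^{\Gamma}_{-\bt^*{J_0}^*w}$ are precisely the right-invariant vector fields attached to the sections $-d({J_0}^*w)\in\Gamma(T^*M)=\Gamma(A(\Gamma))$, while the $\g$-generators $\phi(v)=({J_1}^*v)^{\Gamma}$ are multiplicative, i.e.\ infinitesimal Lie algebroid automorphisms covering ${J_1}^*v$ on $M$. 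Differentiating the quotient by $\Phi$ therefore amounts to quotienting $A(\Gamma)=T^*M$ by the subbundle $N:=\mathrm{span}\{d({J_0}^*w):w\in\h\}$ and then by $G$; since $N^{\circ}=\ker({J_0})_*=D$ we obtain $A(\Gamma/(H\rtimes G))=(T^*M/N)/G=D^*/G$, so that $A(\Gamma/(H\rtimes G))[1]=(D/G)^*[1]$.

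It then remains to match this with the output of Prop.~\ref{glocM}, whose quotient N-manifold is the same $(D/G)^*[1]$, with $C_0=C^{\infty}(M/G)$ and $C_1=\Gamma(D/G)$. Here is where the two pictures must be glued: by Remark~\ref{gmVB} the graded generators $\psi(w)=X_{{J_0}^*w}$ and $\psi(v)=X_{{J_1}^*v}$ correspond, under the dictionary of the first paragraph, to exactly the same infinitesimal data $-d({J_0}^*w)$ and the cotangent lift of ${J_1}^*v$ that define $A(p)$. Hence the graded symplectic reduction of $\cM$ by $\Psi$ and the Lie differentiation $A(p)$ of $p$ are one and the same map on graded data. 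In particular the degree $-1$ Poisson bracket descended in Prop.~\ref{glocM} coincides with the one transported by $A(p)$, and the descended homological vector field $\un{Q}$ coincides with the image of $X_{\cS}$; both are well defined precisely because $\psi$ is a morphism of DGLAs and $p$ is a Poisson map, so that $X_{\cS}$ is projectable in each picture. This yields the asserted equality of the two Lie bialgebroids (the apparent dualization $D/G$ versus $D^*/G$ being the harmless swap of $A$ and $A^*$).

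The main obstacle I anticipate is this last gluing step: proving rigorously that graded symplectic reduction commutes with Lie differentiation, i.e.\ that the square formed by the $\Psi$-quotient of $\cM$, the $\Phi$-quotient of $\Gamma$, and the two instances of the dictionary $\cM\leftrightarrow(\text{Lie bialgebroid of }\Gamma)$ actually commutes. The delicate point is not the pointwise identification of generators — that is immediate from \eqref{identi} and Remark~\ref{gmVB} — but controlling the homological vector field under the two quotients simultaneously, since $X_{\cS}$ is not invariant (it only normalizes the relevant ideals). One must therefore track carefully that the descended $\un{Q}$ of Prop.~\ref{glocM} and the differential of the Poisson groupoid $\Gamma/(H\rtimes G)$ agree as homological vector fields, and not merely as bundle maps.
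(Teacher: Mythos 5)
Your proposal is correct and follows essentially the same route as the paper: differentiate the projection $p$ of Prop.~\ref{groidgl}, identify the induced map on Lie algebroids with the graded quotient map $T^*M \rightarrow (T^*M/D^{\circ})/G = D^*/G$ via the identification \eqref{identi}, and conclude that the two structures agree because both are descended along this single map. Two remarks. First, the paper splits your one appeal to ``functoriality of Lie differentiation for Lie bialgebroids'' into its two genuine ingredients: a Lie groupoid morphism differentiates to a Lie algebroid morphism (this handles the homological vector field, i.e.\ the Lie algebroid structure on $D^*/G$), and a Poisson groupoid morphism differentiates to a Poisson map of Lie algebroids -- the latter is not formal, and the paper cites \cite[Prop.\ 11.4.13]{MK2} for it; your argument silently assumes it and should cite or prove it. Second, the obstacle you anticipate in your closing paragraph is not actually there: one never needs to show that graded symplectic reduction ``commutes with Lie differentiation'' as an operation on all data. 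Each of the two candidate structures on $(D^*/G)[1]$ is characterized as the unique one making the same surjective submersion a morphism (a Lie algebroid morphism in the case of the homological vector field, a Poisson map in the case of the degree $-1$ bracket), and uniqueness of descended structures along a fixed surjection finishes the proof. In particular, your worry that $X_{\cS}$ is not invariant is immaterial: Prop.~\ref{glocM} already establishes that $X_{\cS}$ preserves the invariant functions (since $[X,X_{\cS}]$ is again an infinitesimal generator whenever $X$ is), which is exactly projectability, and a projectable vector field has a unique projection.
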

 \begin{proof}
Recall from Remark \ref{bial} that a Lie algebroid structure on a vector bundle $A$ is equivalent to a homological vector field on $A[1]$, via the derived bracket construction. 
The Lie algebroid structure on $D^*/G$  induced by the global quotient of $T^*[1]M$ is given using the pushforward of the vector field $X_{\cS}$ to $(D^*/G)[1]$. Hence it is the unique one so that $T^*M\rightarrow (T^*M/D^{\circ})/G=D^*/G$ is a Lie algebroid morphism. On the other hand the groupoid structure on $\Gamma/(H \rtimes G)$ is determined by the fact that  $p \colon \Gamma \rightarrow \Gamma/(H \rtimes G)$ is a Lie groupoid morphism, which in turn  induces a morphism of Lie algebroids $p_* \colon ker(\bs_*)|_M \rightarrow 
ker(\un{\bs}_*)|_{M/G}$ where $\un{\bs}$ denotes the source map of the quotient groupoid. Under the  identification at the beginning of Section \ref{sec:groupGamma} this map is just the quotient map $T^*M\rightarrow (T^*M/D^{\circ})/G=D^*/G$. We conclude that the Lie algebroid structures on $D^*/G$ determined by Prop. 
 \ref{glocM} and  Prop. \ref{gloGamma} agree.

Recall that a Lie algebroid structure on a vector bundle $A$ is also determined by a degree -1 Poisson bracket on $A^*[1]$, or equivalently by a fiber-wise linear Poisson structure on $A^*$.
The canonical Lie algebroid structure on $TM$ is determined by the Poisson bracket corresponding to the standard symplectic form on $T^*[1]M$. The Lie algebroid structure on $D/G$   induced by the global quotient of $T^*[1]M$ is determined by 
the  projection $T^*[1]M \rightarrow (D^*/G)[1]$ being a Poisson map. On the other hand 
the projection $\Gamma \rightarrow \Gamma/(H \rtimes G)$ is a Poisson groupoid morphism
by Prop. \ref{groidgl}, so the induced map on Lie algebroids
 $T^*M \rightarrow D^*/G$ is also a Poisson map \cite[Prop. 11.4.13]{MK2}. This shows that the two Lie algebroid structures on $D/G$ determined by Prop. 
 \ref{glocM} and  Prop. \ref{gloGamma} agree.
\end{proof}

\section{Hamiltonian  reduction of $\Gamma$}\label{Sec:MWGamma}

We assume again the set-up of Section \ref{sec:groupGamma} and consider the action $\Phi$ of $H \rtimes G$ on $\Gamma$ (see   
Subsection \ref{defPhi}). In this Section  
we show that the Marsden--Weinstein quotient of $\Gamma$ at zero is a symplectic groupoid (not necessarily source simply connected) of $J_0^{-1}(0)/G$.
This generalizes results of \cite{FOR}, in  which a conventional group $G$ is acting 
(i.e., $\h=0$).

The $G$-action $J^*_1$ on $M$ is   by Poisson diffeomorphism (Lemma \ref{differ} 2)).
By Thm. 3.3 of \cite{FOR} the lifted $G$-action on $\Gamma$ is hamiltonian, and there is a canonical moment map  $J^{\Gamma}_1:\Gamma \rightarrow \g^*$ which is $G$-equivariant. In particular we have
\begin{equation}\label{Jiv}
(J_1^*v)^{\Gamma}=X^{\Gamma}_{(J^{\Gamma}_1)^*v}
\end{equation}
for all $v\in \g$. Therefore a moment map for the $H \rtimes G$-action $\Phi$ is
$$J^{\Gamma}:=(-J_0\circ \bt,J^{\Gamma}_1) \colon \Gamma\rightarrow (\h \rtimes \g)^*,$$ with components
$-\bt^*(J_0^*w)+(J^{\Gamma}_1)^*v$ where $w+v\in \h \rtimes \g$.

The following is a refinement of  Prop. \ref{infaction}.
\begin{lemma}\label{JGpois}
The moment map $J^{\Gamma}$ is a Poisson map.
\end{lemma}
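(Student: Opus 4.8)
The plan is to verify directly the defining property of a Poisson map on (pullbacks of) linear functions. For $(w,v)\in \h \rtimes \g$ viewed as a linear function on $(\h \rtimes \g)^*$, the two components of $J^{\Gamma}$ are $(J^{\Gamma})^*w=-\bt^*(J_0^*w)$ for $w\in \h$ and $(J^{\Gamma})^*v=(J^{\Gamma}_1)^*v$ for $v\in \g$, and I must check
\[
\{(J^{\Gamma})^*\xi_1,(J^{\Gamma})^*\xi_2\}_{\Gamma}=(J^{\Gamma})^*[\xi_1,\xi_2]_{\h \rtimes \g}
\]
for all $\xi_1,\xi_2$, the right-hand side using the linear (Lie--Poisson) structure on $(\h \rtimes \g)^*$. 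Since such functions generate, it suffices to treat the three bilinear pieces $\h$-$\h$, $\h$-$\g$ and $\g$-$\g$ separately, using the bracket formula $[(w_1,v_1),(w_2,v_2)]_{\h\rtimes \g}=([\delta w_1, w_2]+[v_1,w_2]-[v_2,w_1],[v_1,v_2])$ recalled before Prop.~\ref{infaction}.

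For the $\h$-$\h$ piece I would use that $\bt$ is anti-Poisson to reduce $\{\bt^*(J_0^*w_1),\bt^*(J_0^*w_2)\}_{\Gamma}$ to $-\bt^*\{J_0^*w_1,J_0^*w_2\}_M$, and then invoke the chain of equalities \eqref{hh} from the proof of Prop.~\ref{infaction} to rewrite $\{J_0^*w_1,J_0^*w_2\}_M=J_0^*[\delta w_1,w_2]$; comparing with $[(w_1,0),(w_2,0)]_{\h\rtimes \g}=([\delta w_1,w_2],0)$ gives the identity. For the $\h$-$\g$ piece I would write $\{(J^{\Gamma}_1)^*v,-\bt^*(J_0^*w)\}_{\Gamma}=-({J_1}^*v)^{\Gamma}(\bt^*J_0^*w)$ using \eqref{Jiv}, and then apply \eqref{hg} to obtain $-\bt^*(J_0^*[v,w])$, which is exactly $(J^{\Gamma})^*$ applied to $[(0,v),(w,0)]_{\h\rtimes \g}=([v,w],0)$.

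The $\g$-$\g$ piece is the one genuine external input: as recorded just above the lemma, by Thm.~3.3 of \cite{FOR} the map $J^{\Gamma}_1$ is a $G$-equivariant moment map, which is equivalent to $J^{\Gamma}_1\colon \Gamma \to \g^*$ being a Poisson map, i.e.\ $\{(J^{\Gamma}_1)^*v_1,(J^{\Gamma}_1)^*v_2\}_{\Gamma}=(J^{\Gamma}_1)^*[v_1,v_2]_{\g}$; since $[(0,v_1),(0,v_2)]_{\h\rtimes \g}=(0,[v_1,v_2])$, this closes the last case. The three pieces together prove the claim, so the lemma is really a repackaging of Prop.~\ref{infaction}: the main point to watch is sign bookkeeping (the anti-Poisson character of $\bt$ together with the minus sign in the component $-J_0\circ \bt$ of $J^{\Gamma}$), and the fact that no additive Casimir correction can appear — guaranteed because each bracket is computed as an exact equality rather than only up to a locally constant term, as would be the case if one argued merely through $[X_f,X_g]=X_{\{f,g\}}$.
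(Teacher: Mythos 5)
Your proof is correct and follows essentially the same route as the paper: the paper's proof also splits $J^{\Gamma}$ into the three bilinear pieces, handling $\g$--$\g$ via Thm.~3.3 of \cite{FOR} (equivariance of $J^{\Gamma}_1$), $\h$--$\h$ via the computation \eqref{hh}, and $\h$--$\g$ via \eqref{Jiv} combined with \eqref{hg}. Your sign bookkeeping (anti-Poisson $\bt$ against the minus in $-J_0\circ\bt$) matches the paper's, so there is nothing to add.
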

\begin{proof}
$(J^{\Gamma})^*|_{\g \times \g}=(J^{\Gamma}_1)^*$ preserves Poisson brackets because, by Thm. 3.3 of \cite{FOR}, $J^{\Gamma}_1:\Gamma \rightarrow \g^*$   is a $G$-equivariant moment map. 

The computation to show that $(J^{\Gamma})^*|_{\h \times \h}$ respects brackets is \eqref{hh}.

To show that $(J^{\Gamma})^*|_{\h \times \g}$ respects brackets we compute, for $w\in \h$ and $v\in \g$, 
\begin{equation*}\{-\bt^*{J_0}^*w,(J^{\Gamma}_1)^*v\}=(X^{\Gamma}_{(J^{\Gamma}_1)^*v})(\bt^*{J_0}^*w)=-\bt^*{J_0}^*[(w,0),(0,v)]_{\h\rtimes \g}\end{equation*}
where we used \eqref{Jiv} and \eqref{hg} in the second equality.
\end{proof}

\begin{lemma}\label{JGsubgr} Assume that $0$ is a regular value of $J_0 \colon M \rightarrow \h^*$ and that the $G$-action on $C:=(J_0)^{-1}(0)$ is almost free. Then  
the identity connected component of   $(J^{\Gamma})^{-1}(0)$ is a Lie subgroupoid of 
$\Gamma$ with base $C$.
\end{lemma}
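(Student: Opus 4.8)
The plan is to use two structural inputs: the multiplicativity of the $G$-moment map $J^\Gamma_1$ and a compatibility identity relating $J^\Gamma_1$ to $J_0\circ\bs$ and $J_0\circ\bt$. Writing $\Sigma:=(J^\Gamma)^{-1}(0)$ and $\Sigma^0$ for its identity connected component, an element $g\in\Gamma$ lies in $\Sigma$ iff $\bt(g)\in C$ and $J^\Gamma_1(g)=0$. Recall from \cite[Thm. 3.3]{FOR} that $J^\Gamma_1$ is a multiplicative function, i.e.\ $J^\Gamma_1(g\circ h)=J^\Gamma_1(g)+J^\Gamma_1(h)$ for composable $g,h$; in particular $J^\Gamma_1$ vanishes on the unit section and $J^\Gamma_1(g^{-1})=-J^\Gamma_1(g)$. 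The first thing I would establish is the identity
\[
\langle J^\Gamma_1(g),\delta w\rangle=\langle J_0(\bs(g)),w\rangle-\langle J_0(\bt(g)),w\rangle \qquad (w\in\h),
\]
valid on $\Sigma^0$. To prove it, combine the moment-map condition $J_1^*(\delta w)=X^M_{J_0^*w}$ (Lemma \ref{differ} 1)) with Lemma \ref{camille} and \eqref{Jiv} to get $X^\Gamma_{(J^\Gamma_1)^*(\delta w)}=X^\Gamma_{\bs^*J_0^*w-\bt^*J_0^*w}$; since $\Gamma$ is symplectic the two Hamiltonians differ by a locally constant function, and evaluating on the unit section (where every term vanishes) shows this constant is $0$ on the identity component. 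This is exactly the place where restricting to $\Sigma^0$ is used.

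With the identity in hand, the subgroupoid structure is immediate. For $g\in\Sigma^0$ we have $J^\Gamma_1(g)=0$, so the left-hand side of the displayed identity vanishes for every $w\in\h$; combined with $\bt(g)\in C$ this forces $\langle J_0(\bs(g)),w\rangle=0$ for all $w$, i.e.\ $\bs(g)\in C$. Hence both $\bs$ and $\bt$ map $\Sigma^0$ into $C$, and the units over $C$ lie in $\Sigma^0$. Closure under multiplication and inversion then follows from multiplicativity of $J^\Gamma_1$ together with $\bt(g\circ h)=\bt(g)$ and $\bt(g^{-1})=\bs(g)$: if $g,h\in\Sigma^0$ are composable then $J^\Gamma_1(g\circ h)=0$ and $\bt(g\circ h)=\bt(g)\in C$, and similarly for $g^{-1}$; staying inside the identity component is automatic by continuity. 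Thus $\Sigma^0$ is set-theoretically a wide subgroupoid with base $C$.

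It remains to supply the smooth (Lie) structure, and here the two hypotheses enter through infinitesimal freeness of the $H\rtimes G$-action along $\Sigma^0$. At $g\in\Sigma^0$ suppose $\phi(w+v)|_g=0$. Applying $\bs_*$ and using that $X^\Gamma_{-\bt^*J_0^*w}$ is tangent to the $\bs$-fibres gives $(J_1^*v)|_{\bs(g)}=0$; since $\bs(g)\in C$ and $G$ acts almost freely on $C$, this yields $v=0$. Then $X^\Gamma_{-\bt^*J_0^*w}|_g=0$, which by nondegeneracy of $\Omega$ and submersivity of $\bt$ forces $d(J_0^*w)|_{\bt(g)}=0$; since $\bt(g)\in C$ and $0$ is a regular value of $J_0$, the transpose of the surjection $(dJ_0)_{\bt(g)}$ is injective, hence $w=0$. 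So the stabilizer is trivial, $0$ is a regular value of $J^\Gamma$ along $\Sigma^0$, and $\Sigma^0$ is an embedded submanifold of codimension $\dim\h+\dim\g$. Finally, being a smooth submanifold closed under the smooth groupoid operations with smooth unit manifold $C$, $\Sigma^0$ is a Lie subgroupoid; the only point needing care is that $\bs|_{\Sigma^0}$ is a submersion onto $C$, which I would check at the units from the explicit form of $T_{1_x}\Sigma^0$ and then propagate to all of $\Sigma^0$ by right-translation (which preserves $\Sigma^0$ by closure and is a diffeomorphism of $\bs$-fibres).

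The main obstacle is the compatibility identity in the first paragraph: recognizing that $\bs(g)\in C$ (so that the level set closes up into a groupoid over $C$ rather than a mere submanifold) is not visible from the defining equations $\bt(g)\in C$, $J^\Gamma_1(g)=0$ alone, and forcing it requires the interplay of the moment-map condition, Lemma \ref{camille}, and the vanishing of the integration constant on the identity component. The infinitesimal-freeness bookkeeping and the submersion check are then routine.
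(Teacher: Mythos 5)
Your core argument is correct, and it takes a genuinely different route from the paper's at the one point where the lemma is nontrivial, namely showing that every point $g$ of the identity component satisfies $\bs(g)\in C$. The paper gets this by quoting Cor.~4.3 of \cite{FOR} (the identity component $\Sigma$ of $(J^{\Gamma}_1)^{-1}(0)$ is a Lie subgroupoid whose Lie algebroid is $A_p=T_p(G\cdot p)^{\circ}$), checking $\sharp A|_C\subset TC$ by the same pairing computation you use, and then running a path-tangency argument along the $\bs$-fibres of $\Sigma$; the identity component of $(J^{\Gamma})^{-1}(0)$ is finally exhibited as the intersection of the subgroupoids $\Sigma$ and $\bs^{-1}(C)\cap\bt^{-1}(C)$, from which the groupoid structure is inherited. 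You instead integrate the infinitesimal relation (in effect eq.~\eqref{xgxsxt}) to the global identity $\langle J^{\Gamma}_1,\delta w\rangle=\bs^*J_0^*w-\bt^*J_0^*w$ on the components of $\Gamma$ meeting the unit section, and $\bs(g)\in C$ falls out in one line. This is cleaner and more self-contained: only multiplicativity of $J^{\Gamma}_1$ from \cite[Thm.~3.3]{FOR} is used, not Cor.~4.3 nor the integration of subalgebroids; the price is that the subgroupoid and Lie structures must then be verified by hand rather than inherited. Your smoothness step is essentially the paper's (infinitesimal freeness of the $\h\rtimes\g$-action gives submersivity of $J^{\Gamma}$), and your version is tidy and self-consistent, since applying $\bs_*$ first and using almost-freeness at $\bs(g)\in C$ decouples $v$ from $w$.

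The two places where you declare steps routine are, as stated, not correct, though both are reparable with tools you already have. First, closure of the identity component under multiplication is not ``automatic by continuity'': paths connecting $g$ and $h$ to units cannot be composed, since they need not be composable at each time. The repair is right-translation of a source-fibre path, using multiplicativity once more: take a path $\gamma$ from $1_{\bt(h)}$ to $g$ inside $\bs^{-1}(\bt(h))\cap(J^{\Gamma})^{-1}(0)$; then $t\mapsto\gamma(t)\circ h$ stays in the level set, because $J^{\Gamma}_1(\gamma(t)\circ h)=J^{\Gamma}_1(\gamma(t))+J^{\Gamma}_1(h)=0$ and $\bt(\gamma(t)\circ h)=\bt(\gamma(t))\in C$, and it joins $h$ to $g\circ h$. (Like the paper's argument, this really uses the source-fibrewise notion of identity component, which is the reading under which the statement is natural.) Second, your propagation of source-submersivity ``by right-translation'' cannot work: $\bs\circ R_h$ is constant, so $R_h$ carries no information transverse to the $\bs$-fibres. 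What does work, directly at every $g$ in your $\Sigma^0$ with no propagation needed, is a rank count that again exploits your identity: differentiating it along $\ker\bs_{*g}$ shows that the image of $dJ^{\Gamma}_g|_{\ker\bs_{*g}}$ lies in the graph $\{(\delta^*\mu,\mu):\mu\in\g^*\}$, while pairing with $v\in\g$, using \eqref{Jiv}, $(\ker\bs_*)^{\Omega}=\ker\bt_*$ and almost-freeness at $\bt(g)\in C$ shows that the $\g^*$-component of this restriction is onto; hence its rank equals $\dim\g$, so $\dim\bs_*(T_g\Sigma^0)=\dim\Sigma^0-(\dim M-\dim\g)=\dim C$, i.e.\ $\bs|_{\Sigma^0}$ is a submersion onto $C$. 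With these two repairs your proposal is a complete proof.
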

\begin{proof}
We first argue that $(J^{\Gamma})^{-1}(0)$ is a smooth submanifold of $\Gamma$.
Since $0$ is a regular value of $J_0$ it follows that the assignment $w\in \h \mapsto (X^{\Gamma}_{-\bt^*(J_0^*w)})_x$ is injective for every $x\in (J_0 \circ \bt)^{-1}(0)$.
The almost freeness assumption means that the assignment $v\in \g \mapsto (J^*_1v)_m$ is injective for every $m\in J^{-1}(0)$, hence the assignment $v\in \g \mapsto (J^*_1v)^{\Gamma}_x$  is injective for every $x\in (J_0 \circ \bt)^{-1}(0)$ (recall that  
$(J^*_1v)^{\Gamma}$ $\bt$-projects to $J^*_1v$). Since the $X^{\Gamma}_{-\bt^*(J_0^*w)}$ are tangent to the $\bs$-fibers while the $(J^*_1v)^{\Gamma}=X^{\Gamma}_{(J^{\Gamma}_1)^*v}$ are transversal to them, we conclude that  $J^{\Gamma}$ is a submersion at every point of $(J_0 \circ \bt)^{-1}(0)$. In particular $0\in (\h \rtimes \g)^*$ is a regular value of $J^{\Gamma}=(-J_0\circ \bt,J^{\Gamma}_1)$, proving our claim.

Denote by $\Sigma$ the identity connected component of   $(J^{\Gamma}_1)^{-1}(0)$,
which is a Lie subgroupoid of $\Gamma$ by Cor. 4.3 of \cite{FOR}.
We claim  that the identity connected component of   $(J^{\Gamma})^{-1}(0)$, which is  
$\Sigma \cap \bt^{-1}(C)$, is actually contained in 
$\bs^{-1}(C)$. Therefore it is the intersection of the subgroupoids $\Sigma$ and 
$\bt^{-1}(C) \cap \bs^{-1}(C)$, and therefore it is itself a subgroupoid. 
 
To prove the claim\footnote{Our proof shows that the claim holds even if we replace $C=J_0^{-1}(0)$ 
by any other level set of $J_0$.} we use that the Lie algebroid $A$ of $\Sigma$ at $p\in M$, under the identification \eqref{identi}, 
 is given by
$A_p=T_p(G\cdot p)^{\circ}$ (by Cor. 4.3 of \cite{FOR} together with eq. (13) there). We have $\sharp A|_C\subset TC$:
indeed if $p\in C$ and $\xi\in A_p=T_p(G\cdot p)^{\circ}$, then for all $w\in \h$ we have
$$\langle \sharp \xi, d(J_0^*w) \rangle =\langle   \xi, X^M_{J_0^*w} \rangle=
\langle   \xi, J_1^*(\delta w)\rangle=0
$$
where in the second equality we used Lemma \ref{differ} 1) and in the last one that the infinitesimal $G$-action on $M$ is given by the vector fields $J_1^*(v)$ for $v\in \g$. 
The subgroupoid $\Sigma$ is obtained considering  the right-invariant distribution $R_*(A)\subset ker(\bs_*)$ on $\Gamma$ and taking its leaves through points of the base $M$ (Section 3 of \cite{MMRC}).
Take a point in the identity connected component of  $(J^{\Gamma})^{-1}(0)$, i.e., take 
$g$ of $\Sigma$ with $\bt(g)\in C$. We can join $g$ to $\bs(g)$  by a path $\gamma$ in the $\bs$-fiber, so  $\bt_*(\dot{\gamma}(t))\in TC$ for all times $t$ by the above, showing that the whole path $\bt \circ \gamma$ lies in $C$. In particular $\bs(g)\in C$, proving the claim.
\end{proof}

\begin{remark} In \cite[Sect. 4.1]{FOR} a statement analogous to Lemma \ref{JGsubgr} is proved using the fact that the  moment map is a groupoid $1$-cocycle. In our case
$J^{\Gamma}=(-J_0\circ \bt,J^{\Gamma}_1)$ is  not a groupoid $1$-cocycle, unless $J_0\equiv 0$.
\end{remark}

\begin{prop}\label{gloGamma}
Assume that $0$ is a regular value of $J_0$. Assume that the $G$-action on $(J_0)^{-1}(0)$ and the $H$-action on  $[(J^{\Gamma})^{-1}(0)]_{id}$
are free, where the latter denotes the identity connected component of $(J^{\Gamma})^{-1}(0) \subset \Gamma$, and the $H \rtimes G$ action on $[(J^{\Gamma})^{-1}(0)]_{id}$ is proper. Then there are induced symplectic and Lie groupoid structures making
\[
\xymatrix{[(J^{\Gamma})^{-1}(0)]_{id}/( H\rtimes G)
  \ar[d]\ar[d]\ar@<-1ex>[d]\\ (J_0)^{-1}(0)/G}
\]
into a  symplectic  groupoid.
\end{prop}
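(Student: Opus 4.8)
The plan is to assemble the statement from two structures that have essentially already been built, and then to verify their compatibility. Write $\Sigma_0 := [(J^{\Gamma})^{-1}(0)]_{id}$ and let $\iota\colon \Sigma_0 \hookrightarrow \Gamma$ be the inclusion. By the proof of Lemma \ref{JGsubgr} the map $J^{\Gamma}$ is a submersion along $(J_0\circ\bt)^{-1}(0)$, so $0$ is a regular value and $\Sigma_0$ is a smooth Lie subgroupoid of $\Gamma$ with base $C := (J_0)^{-1}(0)$. Since $H\rtimes G$ is connected and $J^{\Gamma}$ is equivariant with $0$ a coadjoint fixed point, the action preserves $(J^{\Gamma})^{-1}(0)$ and hence each of its connected components, in particular $\Sigma_0$. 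Under the freeness and properness hypotheses the quotient $\un{\Sigma}:=\Sigma_0/(H\rtimes G)$ is then a smooth manifold and $p\colon \Sigma_0 \to \un{\Sigma}$ a surjective submersion; as in the proof of Prop. \ref{groidgl} the source and target of $\Gamma$ send $\Sigma_0$ into $C$ and the $H\rtimes G$-action covers the $G$-action there, so the base of the quotient is $C/G = (J_0)^{-1}(0)/G$.

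For the groupoid structure I would repeat the construction of Prop. \ref{groidgl}, now carried out entirely inside the subgroupoid $\Sigma_0$. First I would produce, for any composable classes $\un{x},\un{y}\in\un{\Sigma}$, \emph{composable} lifts $x,y\in\Sigma_0$: arbitrary lifts have sources and targets in $C$ that agree in $C/G$, so a suitable $g\in G\subset H\rtimes G$ moves one of them into composable position while remaining in $\Sigma_0$ (as $G$ preserves $\Sigma_0$ and acts by groupoid automorphisms). Since $\Sigma_0$ is a subgroupoid, $x\circ y\in\Sigma_0$, and one sets $\un{x}\circ\un{y}:=p(x\circ y)$. Independence of the lifts is exactly where the failure of $\Phi$ to act by groupoid automorphisms is absorbed: if $x'=k_1x$, $y'=k_2y$ is a second composable pair, Prop. \ref{kxky} gives $k_1x\circ k_2y=(h_1h_2,g_2)(x\circ y)$, whence $p(x'\circ y')=p(x\circ y)$. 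Units and inverses descend from those of $\Gamma$ over $C$, and all structure maps are smooth because $p$ is a surjective submersion.

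The symplectic structure comes from Marsden--Weinstein reduction at $0$. By Lemma \ref{JGpois} the moment map $J^{\Gamma}\colon\Gamma\to(\h\rtimes\g)^*$ is Poisson, so $(J^{\Gamma})^{-1}(0)$ is coisotropic and, at a regular value, the kernel of $\iota^*\Omega$ is spanned by the infinitesimal generators of the (free, proper) $H\rtimes G$-action. Standard reduction then yields a unique symplectic form $\un{\Omega}$ on $\un{\Sigma}$ with $p^*\un{\Omega}=\iota^*\Omega$. To conclude that $(\un{\Sigma},\un{\Omega})\rightrightarrows C/G$ is a symplectic groupoid I would show that $\mathrm{graph}(m_{\un{\Sigma}})$ is Lagrangian in $\un{\Sigma}\times\un{\Sigma}\times\overline{\un{\Sigma}}$. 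Isotropy is inherited: $\mathrm{graph}(m_{\Gamma})$ is Lagrangian in $\Gamma\times\Gamma\times\overline{\Gamma}$, its intersection with $\Sigma_0\times\Sigma_0\times\Sigma_0$ surjects onto $\mathrm{graph}(m_{\un{\Sigma}})$ under $p\times p\times p$, and the identity $p^*\un{\Omega}=\iota^*\Omega$ transports the vanishing of $\Omega\oplus\Omega\oplus(-\Omega)$ to that of $\un{\Omega}\oplus\un{\Omega}\oplus(-\un{\Omega})$. A dimension count then upgrades isotropic to Lagrangian: $\dim\Sigma_0=2\dim M-\dim\h-\dim\g$ gives $\dim\un{\Sigma}=2(\dim M-\dim\h-\dim\g)=2\dim(C/G)$, so the $3\dim(C/G)$-dimensional graph is exactly half-dimensional.

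The main obstacle is the well-definedness of the multiplication on $\un{\Sigma}$. Because $\Phi$ does \emph{not} act by groupoid automorphisms, one cannot simply quotient the subgroupoid $\Sigma_0$ by a group of automorphisms; the construction relies essentially on the twisted compatibility of Prop. \ref{kxky}, together with the bookkeeping needed to confine everything to the identity connected component $\Sigma_0$ and to check that it is preserved by $H\rtimes G$ and closed under the groupoid multiplication. Once these points are secured, the symplectic and groupoid pieces are routine consequences of Marsden--Weinstein reduction and of the Lagrangian-graph characterization of symplectic groupoids.
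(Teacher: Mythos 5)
Your proposal is correct and follows essentially the same route as the paper's proof: smoothness and the Lie subgroupoid property of $\Sigma_0:=[(J^{\Gamma})^{-1}(0)]_{id}$ from Lemma \ref{JGsubgr}, the induced groupoid structure on the quotient via composable lifts and Prop. \ref{kxky} exactly as in the first half of the proof of Prop. \ref{groidgl}, the symplectic structure by Marsden--Weinstein reduction at zero using Lemma \ref{JGpois}, and the compatibility via the half-dimensionality count together with multiplicativity of the reduced form (your Lagrangian-graph argument is just the expanded version of the paper's one-line statement). The only elision: you treat freeness of the full $H\rtimes G$-action on $\Sigma_0$ as if it were hypothesized, whereas the proposition only assumes freeness of the $G$-action on $(J_0)^{-1}(0)$ and of the $H$-action on $\Sigma_0$; the paper supplies the missing short deduction --- if $(h,g)$ fixes $x$, then applying $\bs$ and using \eqref{source} gives $g\,\bs(x)=\bs(x)$, so $g=e$ by freeness of the $G$-action on the base, and then $h=e$ by freeness of the $H$-action --- which you should include before invoking smoothness of the quotient.
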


\begin{remark}
When $\h=\{0\}$, Prop. \ref{gloGamma} recovers a symplectic groupoid of $M/G$ (see Ex. \ref{h0}).
\end{remark}

\begin{proof}
$[(J^{\Gamma})^{-1}(0)]_{id}$ is smooth by  Lemma \ref{JGsubgr}. The lifted $G$-action   and the $H$-action on it are free. We deduce that the $H\rtimes G$-action on $[(J^{\Gamma})^{-1}(0)]_{id}$ is free. Indeed, assume that $(g,h)\in H\rtimes G$  fixes $x\in \Gamma$. Then we have $\bs((e,g)x)=\bs((h,g)x)=\bs(x)$, from which we conclude that $g=e$ and hence $h=e$, showing freeness. By the properness assumption, the quotient $[(J^{\Gamma})^{-1}(0)]_{id}/( H\rtimes G)$ is smooth.

 Lemma \ref{JGpois} ensures that $J^{\Gamma}$ is $( H\rtimes G) $-equivariant, so the Marsden--Weinstein reduced space at  zero, 
  $[(J^{\Gamma})^{-1}(0)]_{id}/( H\rtimes G)$, is a  symplectic manifold. 

$[(J^{\Gamma})^{-1}(0)]_{id}$ is a Lie subgroupoid   of $\Gamma$ by Lemma \ref{JGsubgr}, and $[(J^{\Gamma})^{-1}(0)]_{id}/( H\rtimes G)$ has an induced Lie groupoid structure (see the first half of the proof of Thm. \ref{groidgl}).

To show the compatibility between the symplectic and groupoid structure, notice that the dimension of $(J^{\Gamma})^{-1}(0)/( H\rtimes G)$ is double the dimension of the base $(J_0)^{-1}(0)/G$, and that the reduced symplectic form is multiplicative because 
the symplectic form on $\Gamma$ is multiplicative.  
\end{proof}

We conclude this section comparing the structures on the Marsden--Weinstein quotients of $\Gamma$ and $T^*[1]M$.

\begin{prop}  The Poisson structure on  $(J_0)^{-1}(0)/G$ given\footnote{By the requirement that its source map $\un{\bs}$ be Poisson.} by the quotient symplectic groupoid of Prop. \ref{gloGamma} agrees with the one obtained in Prop. \ref{MWcm}.
\end{prop}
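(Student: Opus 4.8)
The plan is to reduce everything to the defining property of the symplectic-groupoid Poisson structure. Write $C := J_0^{-1}(0)$, $B := C/G$, $\Sigma_0 := [(J^{\Gamma})^{-1}(0)]_{id}$, and let $q_\Sigma \colon \Sigma_0 \to \Gamma_B := \Sigma_0/(H\rtimes G)$ and $q_C \colon C \to B$ be the quotient maps; the reduced source map $\un{\bs} \colon \Gamma_B \to B$ is a surjective submersion and satisfies $\un{\bs}\circ q_\Sigma = q_C \circ (\bs|_{\Sigma_0})$. By the footnote to Prop. \ref{gloGamma}, the Poisson structure $\pi_{SG}$ coming from the quotient symplectic groupoid is by definition the unique Poisson structure on $B$ making $\un{\bs}$ a Poisson map (uniqueness holds because $\un{\bs}^*$ is injective). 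Thus I only need to show that $\un{\bs}$ is a Poisson map when $B$ carries the structure $\pi_{MW}$ of Prop. \ref{MWcm}; then $\pi_{SG}=\pi_{MW}$.

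To verify this, fix $\un f,\un g\in C^{\infty}(B)$. Recall from the proof of Prop. \ref{MWcm} that $E=[\{J_1^*v\}_{v\in\g}]|_C\subset TC$, and that by Remark \ref{PoisbrCG} (i.e.\ Lemma \ref{desc}) $\pi_{MW}$ is computed by choosing extensions $\hat f,\hat g\in C^{\infty}_E(M)$ of the pullbacks $q_C^*\un f,q_C^*\un g$ and restricting $\{\hat f,\hat g\}_M$ to $C$. I claim the functions $\bs^*\hat f,\bs^*\hat g$ on $\Gamma$ lie in the Poisson normalizer $\cN(\cI_{\Sigma_0})$, where $\cI_{\Sigma_0}$ is generated by $\{\bt^*J_0^*w\}_{w\in\h}$ and $\{(J_1^\Gamma)^*v\}_{v\in\g}$. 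Indeed $\{\bs^*\hat f,\bt^*J_0^*w\}=0$ because source- and target-pullbacks Poisson-commute in a symplectic groupoid (the source and target fibres form a dual pair), while $\{\bs^*\hat f,(J_1^\Gamma)^*v\}=-(J_1^*v)^{\Gamma}(\bs^*\hat f)=-\bs^*(J_1^*v(\hat f))$ by \eqref{Jiv}, and this restricts to $0$ on $\Sigma_0$ since $\bs(\Sigma_0)\subset C$ (Lemma \ref{JGsubgr}) and $J_1^*v(\hat f)|_C=0$ by $\hat f\in C^{\infty}_E(M)$ and $J_1^*v|_C\in E$. Moreover $(\bs^*\hat f)|_{\Sigma_0}=q_\Sigma^*\un{\bs}^*\un f$ (and likewise for $\hat g$).

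It then remains to chain the bracket identities. Using the reduced-bracket formula for the coisotropic (Marsden--Weinstein) quotient $\Gamma_B$ together with the fact that $\bs$ is a Poisson map, I compute
\[
q_\Sigma^*\{\un{\bs}^*\un f,\un{\bs}^*\un g\}_{\Gamma_B}
=\{\bs^*\hat f,\bs^*\hat g\}_{\Gamma}\big|_{\Sigma_0}
=\big(\bs^*\{\hat f,\hat g\}_M\big)\big|_{\Sigma_0}
=(\bs|_{\Sigma_0})^*\big(\{\hat f,\hat g\}_M|_C\big),
\]
and by Remark \ref{PoisbrCG} the last factor equals $q_C^*\{\un f,\un g\}_{\pi_{MW}}$, so the right-hand side is $q_\Sigma^*\un{\bs}^*\{\un f,\un g\}_{\pi_{MW}}$. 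Cancelling the injective $q_\Sigma^*$ gives $\{\un{\bs}^*\un f,\un{\bs}^*\un g\}_{\Gamma_B}=\un{\bs}^*\{\un f,\un g\}_{\pi_{MW}}$, i.e.\ $\un{\bs}$ is Poisson for $\pi_{MW}$, which finishes the proof. The step I expect to be most delicate is the normalizer claim: one must choose the extensions inside $C^{\infty}_E(M)$ precisely so that the $\g$-components of $J^\Gamma$ bracket into $\cI_{\Sigma_0}$, and one must invoke the dual-pair property $\{\bs^*(\cdot),\bt^*(\cdot)\}=0$ for the $\h$-components; matching these two extension prescriptions (normalizer extensions on $\Gamma$ versus $C^{\infty}_E$-extensions on $M$) is the heart of the argument.
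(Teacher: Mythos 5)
Your proof is correct and follows essentially the same route as the paper's: both arguments reduce the statement to showing that $\un{\bs}$ is Poisson for the structure of Prop.~\ref{MWcm}, by chasing brackets around the commutative square $\un{\bs}\circ q_\Sigma = q_C\circ(\bs|_{\Sigma_0})$, using the extension-based bracket formulas on both quotients and the injectivity of $q_\Sigma^*$. The only difference is one of detail: your explicit verification that $\bs^*\hat f$ normalizes the ideal of $\Sigma_0$ (via $\{\bs^*(\cdot),\bt^*(\cdot)\}=0$ and \eqref{Jiv}) is exactly the step the paper compresses into the remark that extensions ``cause no problems since the two inclusions on the left are inclusions of coisotropic submanifolds.''
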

\begin{proof} Denote $C:= (J_0)^{-1}(0)$.
 Endow  $C/G$ with the Poisson structure $\{\cdot,\cdot\}_{C/G}$
obtained in Prop. \ref{MWcm}, which by Remark \ref{PoisbrCG} is computed pulling back functions on $C/G$ to functions on $C$.
  Consider  the commutative diagram:
\[
\xymatrix{
\Gamma \;\; \supset & (J^{\Gamma})^{-1}(0)\ar[r]^{\tilde{\pi}}\ar[d]^{\bs}& (J^{\Gamma})^{-1}(0)/(H \rtimes G) \ar[d]^{\un{\bs}}\\
M \;\; \supset & C\ar[r]^{\pi}&C /G
}.
\]
Given functions $f_1,f_2$ on $C/G$ we have
$$ (\un{\bs}\circ \tilde{\pi})^*\{f_1,f_2\}_{C/G}=(\pi \circ \bs)^*\{f_1,f_2\}_{C/G}=\{(\pi \circ \bs)^*f_1,(\pi \circ \bs)^*f_2\} 
= \tilde{\pi}^*\{ \un{\bs}^*f_1, \un{\bs}^*f_2\}.$$ 
Here we used that $\pi \circ \bs$ and $\tilde{\pi}$ preserve Poisson brackets 
(upon extensions of functions, which cause no problems since the two inclusions on the left are inclusions of  coisotropic submanifolds). Now just use the fact that $\tilde{\pi}^*$ is injective, to conclude that $\un{\bs}$ is a Poisson map.
\end{proof}

\section{Lie 2-group actions}\label{sec:cga}

In this Section we show that the action $\Phi$ of $H\rtimes G$ on $\Gamma$ defined in 
Subsection \ref{defPhi} is an action in \textbf{Gpd}, the category of Lie groupoids (rather than just an action in the category of smooth manifolds). 

We start recalling some notions from  Appendix \ref{A}.
The DGLA $\h[1]\oplus \g$ corresponds to a crossed module of Lie algebras, which integrates to a crossed module of Lie groups.
A crossed module of Lie groups gives rise to a (strict) Lie 2-group, as follows.
 Given a crossed module of Lie groups
$(H,G,\partial, \varphi)$, consider  the  action of $H$ on $G$  by $h\mapsto \partial h \cdot$ and the induced transformation groupoid  
\[
\xymatrix{H\times G \ar[d]\ar[d]\ar@<-1ex>[d]\\ G},
\]
which we denote by $\cG$.
This means that the source and target maps are given by $$\bs(h,g)=g\;\;\;\;\;\;\;\;\;\;\;\;\bt(h,g)=\partial h \cdot g$$ and the groupoid multiplication is $(h_1,g_1)\circ (h_2,g_2)=(h_1h_2,g_2)$. Endow the spaces of arrows with the semi-direct group structure given by  the action $\varphi$ of $G$ on $H$ (see \eqref{semdirgr}). The space of objects  $G$ already has a group structure. These data make $\cG$ into a Lie-2 group (Def. \ref{catgr}).
 
Thinking of a Lie 2-group as a   group object in \textbf{Gpd}
leads to define a \emph{strict action} of a Lie 2-group $\cG$ on 
a Lie groupoid $\Gamma$ as a Lie groupoid morphism $\cG\times \Gamma \rightarrow \Gamma$ which is also a group action (for the groups
$H \rtimes G$ and $G$).
    
\begin{thm}\label{catgroupoact}
The group actions $\Phi$ of $H \rtimes G$ on $\Gamma$ (integrating $\phi$) and  of $G$ on $M$ (integrating $J^*_1$, which we assume to be a free action) combine to a strict action of the Lie 2-group $H \times G {\rightrightarrows} G$ on
$\Gamma{\rightrightarrows} M$.  
\end{thm}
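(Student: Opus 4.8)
The plan is to unwind the definition of a strict action and to recognize that all the required compatibilities have already been established in the preceding subsections. Recall that a strict action of the Lie 2-group $\cG=(H\times G \rightrightarrows G)$ on $\Gamma\rightrightarrows M$ is a morphism of Lie groupoids $\cG\times\Gamma\rightarrow\Gamma$ whose object-level map $G\times M\rightarrow M$ and arrow-level map $(H\times G)\times\Gamma\rightarrow\Gamma$ are group actions of $G$ and $H\rtimes G$ respectively. I would take the object-level map to be the given $G$-action on $M$ and the arrow-level map to be $\Phi$; both are group actions by construction, so the only substantive point is that together they form a morphism of Lie groupoids.

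First I would spell out the source groupoid $\cG\times\Gamma$: its objects are $G\times M$ and its arrows $(H\times G)\times\Gamma$, with source $((h,g),x)\mapsto(g,\bs(x))$, target $((h,g),x)\mapsto(\partial h\cdot g,\bt(x))$, and componentwise multiplication. A pair of arrows is composable precisely when both components are, i.e. $g_1=\partial h_2\cdot g_2$ (composability in $\cG$) and $\bs(x)=\bt(y)$ (composability in $\Gamma$). Checking that the pair $(\Phi,\text{$G$-action})$ is a groupoid morphism then reduces to three verifications. Compatibility with the source, $\bs((h,g)\cdot x)=g\cdot\bs(x)$, is exactly eq. \eqref{source}. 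Compatibility with the target, $\bt((h,g)\cdot x)=(\partial h\cdot g)\cdot\bt(x)$, is exactly eq. \eqref{target}. Compatibility with multiplication, namely
\[
\big((h_1,g_1)\cdot x\big)\circ\big((h_2,g_2)\cdot y\big)=(h_1h_2,g_2)\cdot(x\circ y)
\]
for composable pairs, together with the assertion that the left-hand side is defined exactly when $g_1=\partial h_2\cdot g_2$, is precisely the content of Prop. \ref{kxky}.

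Having checked compatibility with source, target and multiplication, compatibility with units and inverses is automatic, since a morphism of groupoids that preserves composition necessarily preserves the algebraically determined identities and inversions. Thus the pair defines a Lie groupoid morphism $\cG\times\Gamma\rightarrow\Gamma$, and since its two component maps are the group action $\Phi$ of $H\rtimes G$ on arrows and the given $G$-action on objects, it is a strict action of $\cG$ in the required sense.

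I expect no genuine obstacle: the theorem is essentially a repackaging of the earlier geometric results into the language of Lie 2-groups, and the real work -- the intertwining of the action with the groupoid multiplication -- was already carried out in Prop. \ref{kxky} (whose proof in turn rested on Lemmas \ref{kxy} and \ref{xky}, and used the freeness of the $G$-action on $M$). The only point requiring slight care is to match the composability condition $g_1=\partial h_2\, g_2$ appearing in Prop. \ref{kxky} with the composability of arrows in the product groupoid $\cG\times\Gamma$, and to confirm that the group structures on arrows and objects are indeed those of $H\rtimes G$ and $G$ as demanded by the definition of a strict action.
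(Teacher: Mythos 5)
Your proposal is correct and follows essentially the same route as the paper's proof: both reduce the statement to checking that the pair (arrow-level $\Phi$, object-level $G$-action) respects source, target and multiplication, citing eq.~\eqref{source}, eq.~\eqref{target} and Prop.~\ref{kxky} respectively, with the composability condition $g_1=(\partial h_2)g_2$ matched to composability in the transformation groupoid $H\times G\rightrightarrows G$. Your added remark that preservation of units and inverses is automatic is a harmless (and correct) supplement that the paper leaves implicit.
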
 
\begin{proof}
We have to check that the map 
\[
\Phi \colon \xymatrix{
(H\times G) \ar[d]\ar[d]\ar@<-1ex>[d]   \times \Gamma 
 \ar[r] & \Gamma  \ar[d]\ar[d]\ar@<-1ex>[d]
\\ 
G   \times   M \ar[r] & M  
}
\]
is a groupoid morphism, i.e., that it respects source map, target map and multiplication.

Let $(h,g) \in H\rtimes G$ and $x\in \Gamma$. The source is preserved because $\bs((h,g)x)=g\bs(x)$ by \eqref{source}. 
The target is preserved because because
$\bt((h,g)x)=(\partial h) g\bt(x)$ by \eqref{target}.

To show that the multiplication is preserved let
$(h_1,g_1,x_1)$ and  $(h_2,g_2,x_2)$ be composable elements of the product  groupoid
$(H \times G)\times \Gamma$. This means that $g_1=(\partial h_2) g_2$ (by the definition of the transformation groupoid $H \times G {\rightrightarrows} G$) 
 and that $x_1$ and $x_2$ are composable. We have to show that applying first $\Phi$  and then the multiplication in $\Gamma$ yields the same as applying first the multiplication in the groupoid $(H \times G)\times \Gamma$
  and then  $\Phi$. In other words, we have to show
\begin{equation}\label{catgract}
(h_1,g_1)x_1\circ (h_2,g_2)x_2= (h_1h_2,g_2)[x_1\circ x_2].\end{equation} This equation holds by Prop. \ref{kxky}.
\end{proof}   


Wockel defines in  \cite[Def. I.8]{Wo} the notion of \emph{principal $\cG$-2-bundle} over a base manifold $N$, where $\cG$ is a (strict) Lie 2-group.
 It is a categorified version of the usual notion of principal bundle.
One reason why  principal $\cG$-2-bundles are  interesting is the following \cite[Rem. II.11]{Wo}: 
when the Lie 2-group $\cG$ corresponds to a crossed module of Lie groups of the form $(H,Aut(H),\partial, \varphi)$, where $H$ is a Lie group and $\partial \colon H \to Aut(H)$ is given by conjugation, then  principal $\cG$-2-bundles  define gerbes over $N$.

We show that, under certain assumptions, our action $\Phi$ defines a principal $\cG$-2-bundle:
\begin{prop}
If the action $\Phi$ is such that both the $G$-action on $M$ and the 
$H \rtimes G$-action on $\Gamma$ are free and proper with the same quotient $N:=M/G=\Gamma/(H \rtimes G)$, then the action $\Phi$ makes $$\Gamma \overset{\pi}{\to}  N$$ into a principal $\cG$-2-bundle.
\end{prop}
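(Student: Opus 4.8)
The plan is to verify directly the conditions in Wockel's definition \cite[Def. I.8]{Wo} of a principal $\cG$-2-bundle, checking them separately on objects and on morphisms and using the strictness of the action established in Thm.~\ref{catgroupoact} to glue the two levels together. Recall that such a 2-bundle is the analogue, internal to the category \textbf{Gpd}, of an ordinary principal bundle: a smooth functor $\pi$ from the total groupoid to the base (viewed as a trivial groupoid), equivariant for the $\cG$-action, such that $\pi$ is a surjective submersion, the action is fibre-preserving, the division map $\cG\times\Gamma\to\Gamma\times_N\Gamma$ is an isomorphism of Lie groupoids, and the bundle is locally trivial. The guiding idea is that a free and proper (ordinary) action already makes its quotient map into a locally trivial principal bundle, so it suffices to run this classical fact on the object level and on the morphism level and to observe that the two outcomes are compatible.

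First I would make $\pi$ explicit as a functor: on objects it is the quotient map $M\to M/G=N$, and on morphisms it is the quotient map $\Gamma\to\Gamma/(H\rtimes G)=N$. The hypothesis $M/G=\Gamma/(H\rtimes G)=N$ says precisely that the quotient groupoid $\Gamma/(H\rtimes G)\rightrightarrows M/G$ of Prop.~\ref{groidgl} is the trivial (unit) groupoid on $N$; equivalently, after the identifications, $\pi_0\circ\bs=\pi_0\circ\bt=\pi_1$. This is exactly what is needed for $\pi$ to be a smooth functor into the trivial groupoid $N$. Since the $G$-action on $M$ and the $H\rtimes G$-action on $\Gamma$ are free and proper, both $M\to N$ and $\Gamma\to N$ are surjective submersions, so $\pi$ is a surjective submersion of groupoids; moreover $\Phi$ preserves the $\pi$-fibres because these are by construction the orbits of the action.

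Next I would establish principality. By the classical characterization of principal bundles via free and proper actions, the object-level division map $G\times M\to M\times_N M$, $(g,m)\mapsto(gm,m)$, and the morphism-level division map $(H\rtimes G)\times\Gamma\to\Gamma\times_N\Gamma$, $(k,x)\mapsto(kx,x)$, are both diffeomorphisms, and each of $M\to N$, $\Gamma\to N$ is locally trivial. These two maps are precisely the object and morphism components of the division map $\cG\times\Gamma\to\Gamma\times_N\Gamma$ of the $\cG$-action, since the object group of $\cG$ is $G$ while its morphism group is $H\rtimes G$. That this pair of diffeomorphisms is a \emph{functor}, i.e.\ intertwines source, target and composition, is exactly the content of Thm.~\ref{catgroupoact}: the strictness of $\Phi$ gives $\bs(kx)=g\,\bs(x)$ and $\bt(kx)=(\partial h)g\,\bt(x)$ (the computations \eqref{source} and \eqref{target}) as well as compatibility with composition via Prop.~\ref{kxky}. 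Hence the division map is an isomorphism of Lie groupoids.

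The main obstacle I anticipate is matching these level-by-level statements to the precise formulation of \cite[Def. I.8]{Wo} --- in particular, ensuring that the local trivializations of the two underlying principal bundles may be chosen simultaneously and $\cG$-equivariantly, so that they constitute a trivialization in \textbf{Gpd} rather than two unrelated ones. This compatibility is again forced by strictness: a local section of $\Gamma\to N$ whose projections under $\bs$ and $\bt$ give sections of $M\to N$ yields, via $\Phi$, a groupoid-equivariant local trivialization over a neighbourhood in $N$. Granting this, all defining conditions of a principal $\cG$-2-bundle hold and the proposition follows.
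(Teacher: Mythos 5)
Your overall skeleton matches the paper's: both proofs proceed by checking Wockel's Def.~I.8, and your treatment of the functoriality of $\pi$ (via Prop.~\ref{groidgl}), of $\Phi$ being simultaneously a group action and a groupoid morphism (Thm.~\ref{catgroupoact}), and of fibre-preservation is sound. The genuine gap is at local triviality, which is precisely where the paper's proof has its real content. Wockel's definition does not ask for an abstract ``division map'' isomorphism; it asks for an open cover $\{U_i\}$ of $N$ and \emph{Lie groupoid} isomorphisms $\Gamma|_{\pi|_M^{-1}(U_i)}\cong U_i\times\cG$ that intertwine the $\cG$-actions and the projections. Your classical free-and-proper argument produces local trivializations of the two underlying principal bundles $M\to N$ and $\Gamma\to N$ separately, but gluing them into a single trivialization \emph{in} \textbf{Gpd} is exactly the step you defer with ``Granting this,'' and the construction you sketch for it does not work.

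Concretely: you propose to use ``a local section of $\Gamma\to N$ whose projections under $\bs$ and $\bt$ give sections of $M\to N$.'' First, that condition is vacuous — since $\pi$ is a groupoid morphism onto the trivial groupoid $N{\rightrightarrows}N$, \emph{every} section $\sigma$ of $\Gamma\to N$ satisfies it. Second, for a general such $\sigma$ the map $k\mapsto\Phi(k,\sigma(x))$ is not a functor $\cG\to\Gamma$: on objects, $\Phi(g,\sigma(x))$ is an arrow of $\Gamma$, not a unit; and by \eqref{source} and \eqref{target}, the arrows $k_1\sigma(x)$ and $k_2\sigma(x)$ are composable iff $g_1\,\bs(\sigma(x))=(\partial h_2)g_2\,\bt(\sigma(x))$, which for $k_1,k_2$ composable in $\cG$ (i.e. $g_1=(\partial h_2)g_2$) forces $\bs(\sigma(x))=\bt(\sigma(x))$. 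So the section must take values in the unit manifold $M\subset\Gamma$, i.e. one needs a local section of $\pi|_M\colon M\to N$, which is what the paper uses. Even then one must still prove that $(x,k)\mapsto\Phi(k,\hat{x})$ is a groupoid \emph{isomorphism} onto $\Gamma|_{\pi|_M^{-1}(U_i)}$: injectivity comes from freeness of the $\cG$-action, the morphism property from Thm.~\ref{catgroupoact} (via Prop.~\ref{kxky}), and surjectivity from the observation that, because $\Gamma/\cG$ is the trivial groupoid on $N$, the orbits of the groupoid $\Gamma{\rightrightarrows}M$ lie in the fibres of $\pi|_M$ and each such fibre is a single $\cG$-orbit $\Phi(\cG,\hat{x})$. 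None of this is routine bookkeeping; it is the heart of the proposition, and your proposal leaves it unestablished.
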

\begin{proof}
We have to check one by one the items of Definition I.8 of \cite{Wo}.
 \begin{enumerate}
\item[a)] $\Phi$ is both a Lie group action and a morphism of Lie groupoids by Thm. \ref{catgroupoact}. 

\item[b)] The projection $\pi \colon \Gamma \to \Gamma/\cG=N$ is a Lie groupoid morphism by Prop. \ref{groidgl}. 

\item[c)] If $\{U_i\}$ is an open cover of $N$ over which both the $G$-bundle $M$ and the $H \rtimes G$-bundle $\Gamma$ are trivial, then there exist  Lie groupoid isomorphisms
$\Gamma|_{\pi|_M^{-1}(U_i)}\cong U_i\times \cG$ which intertwine the $\cG$-action by $\Phi$ on the L.H.S. and the $\cG$-action by left multiplication on the R.H.S.

To prove this we first need\\
\noindent \emph{Claim: let $x\in U_i$ and $\hat{x}\in M$ with $\pi(\hat{x})=x$. Then
\begin{equation}\label{groidiso}
\cG \cong \Phi(\cG,\hat{x}),\;\;\;\; k \mapsto \Phi (k, \hat{x})
\end{equation}
is a Lie groupoid isomorphism (where the R.H.S. is seen as a subgroupoid of $\Gamma$)  and intertwines the $\cG$-actions.}

\noindent To prove the claim notice that  the above map is a bijection because the $\cG$-action on $\Gamma$ is free.
It is a Lie groupoid morphism as consequence of the fact that  $\Phi$ is a Lie groupoid morphism (Thm. \ref{catgroupoact}). It intertwines the $\cG$-actions because 
$\Phi$ is a Lie group action.

By the principality assumptions we can choose a section of $\pi|_M \colon M \rightarrow N$ over $U_i$.  For every $x\in U$ denote by $\hat{x}\in M$ its image under the section.  Since 
$\Gamma/\cG$ is the trivial groupoid $N {\rightrightarrows}N$ it follows that 
the orbits of the Lie groupoid $\Gamma {\rightrightarrows}M$ lie inside the fibers of $\pi|_M \colon M \rightarrow N$. Hence we can write $\Gamma|_{\pi|_M^{-1}(U_i)}=\cup_{x\in U_i}\Gamma|_{\pi|_M^{-1}(x)}=\cup_{x\in U_i}  \Phi(\cG,\hat{x})$, and 
 from \eqref{groidiso} we conclude that
\begin{equation}
\varphi_i \colon  U_i\times \cG \cong \Gamma|_{\pi|_M^{-1}(U_i)},\;\;\;\;\; (x,k)\mapsto \Phi (k, \hat{x})
\end{equation}
is an isomorphism of Lie groupoids intertwining the $\cG$-actions.

\item[d)] $\varphi_i$ intertwines the first projection onto $U_i$ and $\pi$ since $\pi(\hat{x})=x$.
\end{enumerate}

\end{proof}

\section{Examples}\label{secex}
\addtocontents{toc}{\protect\mbox{}\protect}

Let $(M,\pi)$ be a Poisson manifold and $\Gamma$ its source simply connected symplectic groupoid. 

We present   examples for the global quotient and the Marsden--Weinstein quotient at zero of $\Gamma$ (the Poisson  groupoid of Prop. \ref{groidgl}
 and  the symplectic groupoid of Prop. \ref{gloGamma}   respectively).

In the first example we let $\h=0$ and recover a symplectic groupoid for $M/G$, as in \cite{FOR}.
\begin{ep}\label{h0}
Let $\g$ be a Lie algebra. View it as a DGLA concentrated in degree $0$. This corresponds to the  crossed module of Lie algebras (Def. \ref{xmla}) obtained setting $\h=0$.

A morphism of DGLAs $\g \rightarrow \chi(T^*[1]M)$ with Poisson moment map corresponds
simply to an action of $\g$ on $M$ by Poisson vector fields (see Cor. \ref{summ}). 
In this case the action $\Phi$  on $\Gamma$ (Section \ref{defPhi}) is just the lift of the $G$-action on $M$. Hence
the Marsden--Weinstein quotient of Prop. \ref{gloGamma} is the symplectic groupoid for $M/G$ constructed in Cor. 4.7 of \cite{FOR} (which generally is not source-simply connected).
\end{ep}

In the second example we let $\h=\g$, recover hamiltonian actions,  and construct a symplectic groupoid for the ``Marsden--Weinstein quotient'' of $M$.
 
\begin{ep}\label{RuiDavid}
Let $\g$ be a Lie algebra. Then $\delta=Id \colon \g \rightarrow \g$, together with the adjoint action, gives a crossed module of Lie algebras. Denote  the corresponding DGLA by $\g[1]\oplus \g$. 
A morphism of DGLAs $\psi \colon \g[1]\oplus \g \rightarrow \chi(\cM)$ with Poisson moment map corresponds
 exactly to a  $\g$-action on $(M,\pi)$ with equivariant moment map $J_0 \colon M\rightarrow \g^*$ (see Cor. \ref{summ}). The corresponding infinitesimal action on the symplectic groupoid $\Gamma$ is
\begin{eqnarray*}
 \phi: \g\rtimes \g &\rightarrow& \chi^{sympl}(\Gamma)\\
w+v &\mapsto& X_{-\bt^*{J_0}^*w}^{\Gamma}+X^{\Gamma}_{\bs^*{J_0}^*v-\bt^*{J_0}^*v},
\end{eqnarray*} 
by Prop. \ref{infaction} together with Lemma \ref{camille}. This integrates to a Lie group action $\Phi$ of $G\rtimes G$ on $\Gamma$, whose
 Marsden--Weinstein quotient  as in Prop. \ref{gloGamma} is a symplectic groupoid for $J_0^{-1}(0)/G$, the Marsden--Weinstein quotient of the $G$ action on $M$.

The global quotient of  $\Gamma$ as in Prop. \ref{groidgl} 
is a Poisson groupoid over $M/G$ of dimension $2\dim(M/G)$. However it is \emph{not} a symplectic groupoid in general. For instance consider the $\g=\RR$ action on $M=(\RR^4, \pd{y_1}\wedge \pd{y_2}+\pd{y_3}\wedge \pd{y_4})$ with moment map $J_0=y_4$. The action $\Phi$ of $G\rtimes G$ on $\Gamma=\RR^4\times \RR^4$ has moment map $(-x_4,y_4-x_4)$, and the global quotient of $\Gamma$ is the pair groupoid $\RR^3\times \RR^3$ with Poisson structure 
$-\pd{x_1}\wedge \pd{x_2}+\pd{y_1}\wedge \pd{y_2}.$
\end{ep}

\begin{remark}
 Thm. 3.6 of \cite{FI} considers   Poisson actions of  Poisson Lie groups on $(M,\pi)$ with moment map $J_0$. In the special case in which the Poisson Lie group $G$ has trivial Poisson structure, their construction is the following:
they lift the action to a hamiltonian action of the \emph{product} group $G\times G$  on $\Gamma$,  and state that the Marsden--Weinstein quotient is a symplectic groupoid of $J_0^{-1}(0)/G$.

The corresponding  infinitesimal action and our action $\phi$  in Ex. \ref{RuiDavid}  coincide\footnote{Up to a global sign, due to different  conventions.} by means of the   Lie algebra isomorphism $\g \rtimes \g \cong \g \times \g,\;(w,v)\mapsto (w+v,v)$, so this $G \times G$ action and the global action $\Phi$ in Ex. \ref{RuiDavid}  coincide by means of the
Lie group isomorphism 
$G\rtimes G\cong G\times G,  \;(h,g)\mapsto (hg,g)$.
\end{remark}

In the third example we let $\h$ be an arbitrary $\g$-module, generalizing Ex. \ref{h0}.
\begin{ep}
Let $\g$ be a Lie algebra and $\h$ a $\g$-module. Viewing $\h$ as
an abelian Lie algebra and setting $\delta \colon \h \rightarrow \g$ to be the zero map we obtain a crossed module.
A morphism of DGLAs $\h[1]\oplus \g \rightarrow \chi(T^*[1]M)$ with Poisson moment map corresponds
to an action of $\g$ on $M$ by Poisson vector fields and a $\g$-equivariant map $J_0 \colon M\rightarrow \h^*$ (which we assume to be a submersion) whose fibers are Poisson submanifolds of $M$  (see Cor. \ref{summ}). An instance of such a situation is given by $M=N\times \h^*$, where $N$ is a Poisson manifold on which $\g$ acts by Poisson vector fields: as $\g$ action on $M=N\times \h^*$ one can take the diagonal action, and as $J_0$ just the natural projection. The quotient $M/\g$ is interesting because usually it is not a product (even though $M=N\times \h^*$ is).
\end{ep}
 
 \appendix
\section{DGLAs, crossed modules, Lie 2-groups}\label{A}

In this Appendix we explain the correspondences between the following algebraic structures:
\begin{itemize}
\item  DGLAs concentrated in degrees $-1$ and $0$
\item crossed modules of Lie algebras
\item crossed modules of Lie groups
\item Lie 2-groups.
\end{itemize}


\begin{defi}\label{dla}

\noindent
  A \emph{graded Lie algebra} consists of a a graded vector space\footnote{We assume that $L$ is bounded, i.e., that there exists an integer $I$ such that $L_i=\{0\}$ whenever $|i|>I$.} $L=\oplus_{i\in \ZZ} L_i$
together with a  bilinear bracket $[\cdot,\cdot] \colon L \times L \rightarrow L$ such that
\begin{itemize}
\item[--] the bracket is degree-preserving: $[L_i,L_j]\subset L_{i+j}$ 
 \item[--] the bracket is graded skew-symmetric:
 $[a,b]=-(-1)^{|a||b|}[b,a]$ 
\item[--] the adjoint action $[a,\cdot]$ 
is a degree $|a|$ derivation of the bracket: $[a,[b,c]]=[[a,b],c]+(-1)^{|a||b|}[b,[a,c]]$.
\end{itemize}
  A \emph{differential graded Lie algebra} (DGLA) $(L,[\cdot,\cdot], \delta)$ is a graded Lie algebra together with a linear $\delta : L \rightarrow L$ such that
\begin{itemize}
 \item[--] $\delta$ is a degree $1$ derivation of the bracket:
$\delta(L_i)\subset L_{i+1}$ and $\delta[a,b]=[\delta a,b]+ (-1)^{|a|}[a, \delta b]$ 
\item[--]  $\delta^2=0$.
\end{itemize}
Above $a,b,c$ are homogeneous elements of $L$ of degrees $|a|,|b|,|c|$ respectively.
 
\end{defi}

\begin{defi}\label{xmla} {\cite{GerstRIngAlg}}
\noindent
A \emph{crossed module of Lie algebras} (or \emph{differential crossed module}) consists of Lie algebras $\h$, $\g$ with a Lie algebra morphism $\delta \colon \h \rightarrow \g$ and a left Lie algebra action $\lambda$  of $\g$ on $\h$ by derivations satisfying
\begin{itemize}
 \item[--]    $\lambda(\delta w)=[w, \cdot]_{\h}$
\item[--] $\delta(\lambda(v)w)=[v, \delta w]_{\g}$  
\end{itemize}
 for all $w\in \h$ and $v\in \g$.
\end{defi}

\begin{lemma}\label{dglaxm}
DGLAs concentrated in degrees $-1$ and $0$ are in one-to-one correspondence with crossed modules of Lie algebras. 
\end{lemma}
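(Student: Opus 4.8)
The plan is to exhibit the two constructions explicitly and then check that they are mutually inverse. Given a DGLA $(L,[\cdot,\cdot],\delta)$ concentrated in degrees $-1$ and $0$, write $L=L_{-1}\oplus L_0$ and set $\g:=L_0$, $\h:=L_{-1}$. The degree-$0$ part $\g$ is an ordinary Lie algebra under the restricted bracket, the action $\lambda$ of $\g$ on $\h$ is $\lambda(v)w:=[v,w]$ (which lands in $L_{-1}$ for degree reasons), and --- this is the key point --- the Lie bracket on $\h$ is the \emph{derived bracket} $[w,\tilde w]_{\h}:=[\delta w,\tilde w]$. The subtlety worth emphasizing at the outset is that $[\h,\h]=[L_{-1},L_{-1}]\subset L_{-2}=\{0\}$ vanishes identically in the DGLA, so the genuine (generally nonabelian) Lie structure of a crossed module is \emph{not} the graded bracket itself but precisely this $\delta$-derived bracket; getting the reader to see that this is the only reasonable candidate is the conceptual heart of the argument.

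First I would verify that $[\cdot,\cdot]_{\h}$ is a Lie bracket. Skew-symmetry follows by combining graded skew-symmetry with the fact that $\delta$ is a degree-$1$ derivation: applying $\delta$ to $0=[w,\tilde w]$ gives $[\delta w,\tilde w]=[w,\delta\tilde w]=-[\delta\tilde w,w]$, i.e. $[w,\tilde w]_{\h}=-[\tilde w,w]_{\h}$. For the Jacobi identity I would first record $[\delta w_1,\delta w_2]=\delta[\delta w_1,w_2]=\delta[w_1,w_2]_{\h}$, which uses $\delta^2=0$ and the Leibniz rule; then the graded Jacobi identity applied to the triple $\delta w_1,\delta w_2,w_3$ collapses directly to the ordinary Jacobi identity for $[\cdot,\cdot]_{\h}$. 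With $\h$ a Lie algebra in hand, the crossed-module axioms of Def.~\ref{xmla} follow almost immediately: $\delta$ is a morphism by the displayed identity $\delta[w_1,w_2]_{\h}=[\delta w_1,\delta w_2]$; the relation $\lambda(\delta w)=[w,\cdot]_{\h}$ holds by the very definition of $[\cdot,\cdot]_{\h}$; the equivariance $\delta(\lambda(v)w)=[v,\delta w]_{\g}$ follows from $\delta$ being a derivation together with $\delta|_{L_0}=0$; and $\lambda(v)$ acting by derivations of $[\cdot,\cdot]_{\h}$ follows from the graded Jacobi identity and the equivariance just established.

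For the converse, given a crossed module $(\h,\g,\delta,\lambda)$ I would reassemble $L_{-1}:=\h$, $L_0:=\g$, with graded bracket equal to the $\g$-bracket in bidegree $(0,0)$, equal to $\pm\lambda(v)w$ in the two mixed degrees (the sign fixed by graded skew-symmetry), and zero on $L_{-1}\times L_{-1}$, and with differential the given $\delta$ in degree $-1$ and zero on $L_0$. Verifying this is a DGLA is a case check of the graded Jacobi and graded Leibniz identities organized by the degrees of the inputs. Most cases are immediate or vanish for degree reasons; the one I expect to be the real obstacle is the Leibniz rule $\delta[w,\tilde w]=[\delta w,\tilde w]+(-1)^{|w|}[w,\delta\tilde w]$ on two degree-$(-1)$ inputs. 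There the left side is $\delta(0)=0$, while the right side reduces (after applying graded skew-symmetry to the second term) to $\lambda(\delta w)\tilde w+\lambda(\delta\tilde w)w$, and its vanishing is exactly the skew-symmetry of the $\h$-bracket via $\lambda(\delta w)=[w,\cdot]_{\h}$. So this case is not automatic: it is precisely where the crossed-module axioms must be invoked, and it is the mirror image of the skew-symmetry verification in the first direction. Finally I would note that the two passages are inverse by inspection, since $[w,\tilde w]_{\h}=[\delta w,\tilde w]=\lambda(\delta w)\tilde w$ recovers the original $\h$-bracket and the action $\lambda$ is recovered as $[v,\cdot]$, which closes the bijection.
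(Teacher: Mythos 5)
Your proposal is correct, and its forward direction is exactly the construction the paper records: $\g=L_0$, $\h=L_{-1}$ with the derived bracket $[w_1,w_2]_{\delta}=[\delta w_1,w_2]$, the action $v\mapsto[v,\cdot]$, and the morphism $\delta$. The difference is that the paper does not actually prove the lemma at all --- it declares it ``a well-known fact,'' cites \cite{ZZL} for an explicit proof, and merely recalls the DGLA-to-crossed-module assignment with no verification and no inverse construction. So your write-up supplies precisely what the paper omits: the check (via graded skew-symmetry and the Leibniz rule for $\delta$) that $[\cdot,\cdot]_{\delta}$ is skew, the identity $[\delta w_1,\delta w_2]=\delta[w_1,w_2]_{\delta}$ that makes the graded Jacobi identity collapse to the ordinary one, the crossed-module axioms, the reverse construction, and the observation that the two passages are mutually inverse. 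Your identification of the degree-$(-1,-1)$ Leibniz case as the genuinely non-automatic step in the converse direction is apt; one could add for completeness that the degree-$(0,-1)$ Leibniz case and the degree-$(0,0,-1)$ Jacobi case are also exactly where the remaining crossed-module axioms (equivariance of $\delta$ and the action axiom) are consumed, though there the translation is tautological, so your phrasing ``immediate'' is fair. In short: correct, same construction as the paper's sketch, but a complete argument where the paper defers to a reference.
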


This lemma is a well-known fact. An explicit proof
is given in \cite{ZZL}; here we recall one direction of this correspondence.  
A DGLA concentrated in degrees $-1$ and $0$ is  of the form $L=\h[1]\oplus \g$ for usual vector spaces $\h,\g$.
To such DGLA
$(\h[1] \oplus \g, [\cdot,\cdot],\delta)$ one associates 
\begin{itemize}
 
 \item[--] the Lie algebra $(\h,[\cdot,\cdot]_{\delta})$ where $[w_1,w_2]_{\delta}=[\delta w_1,w_2]$
\item[--] the Lie algebra $\g$
\item[--] the Lie algebra morphism $\delta \colon \h \rightarrow \g$
\item[--]  the action   of $\g$ on $\h$ given by $v \mapsto [v,\cdot]$.
\end{itemize}


\begin{defi}\label{xmgr}
{\cite{MR1170713}\cite{MR1001474}}
A \emph{crossed module of Lie groups} consists of groups $H,G$, a homomorphism $\partial \colon H \rightarrow G$ and a left action $\varphi$ of $G$ on $H$ by group automorphisms such that for all $h\in H$ and $g\in G$:
\begin{itemize}
\item[--] $\varphi (\partial h_1)h_2=h_1 h_2 h_1^{-1} $
\item[--] $\partial(\varphi(g)h)=g \partial h g^{-1}$.
\end{itemize}
\end{defi}

Crossed modules of Lie algebras clearly integrate to crossed modules of Lie groups for which the groups $G$ and $H$ are simply connected.

\begin{defi}\label{catgr}\cite[Sec. 3]{FB}
A \emph{(strict) Lie 2-group} (also known as \emph{categorical group}) 
is a group object in \textbf{Gpd}, where \textbf{Gpd} denotes the category of Lie groupoids and (strict) Lie groupoid homomorphisms. In other words, a Lie 2-group
consists of a Lie groupoid $\cG$ and (strict) Lie groupoid morphisms $\circ: \cG\times \cG \rightarrow \cG$, $e:  (\{pt\}{\rightrightarrows}\{pt\})\rightarrow \cG$, and $i: \cG \rightarrow \cG$, satisfying the usual conditions for the multiplication, identity element and inverses on a group.
\end{defi}
\begin{remark} Writing  $\Omega{\rightrightarrows} M$ for $\cG$,
in particular both the space of arrows $\Omega$ and the space of objects $M$ are groups.
\end{remark}

\begin{lemma} \cite{BL}\cite[Sec. 3]{FB}.
Crossed modules of Lie groups  are in one-to-one correspondence  to Lie 2-groups.
\end{lemma}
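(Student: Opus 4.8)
The plan is to establish the bijection by exhibiting two mutually inverse constructions. One direction is essentially already carried out in Section~\ref{sec:cga}: from a crossed module $(H,G,\partial,\varphi)$ one forms the transformation groupoid $\cG=(H\times G\rightrightarrows G)$ for the $H$-action $h\mapsto \partial h\cdot$, with $\bs(h,g)=g$, $\bt(h,g)=\partial h\cdot g$ and composition $(h_1,g_1)\circ(h_2,g_2)=(h_1h_2,g_2)$; one equips the arrow space $H\times G$ with the semidirect-product group structure \eqref{semdirgr} and keeps the given group structure on the object space $G$. First I would verify that this really defines a Lie 2-group in the sense of Definition~\ref{catgr}, i.e.\ that $\bs$, $\bt$, the unit $u\colon g\mapsto(1_H,g)$, the groupoid inverse and the composition $\circ$ are all group homomorphisms. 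For $\bs,\bt,u$ and the inverse this is a direct computation; for $\circ$ it amounts to the interchange law $(a\circ b)(c\circ d)=(ac)\circ(bd)$, which one checks on the explicit formulas using \emph{both} crossed-module axioms of Definition~\ref{xmgr} (the Peiffer identity enters precisely here, when simplifying the cross terms).

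For the reverse direction, given a Lie 2-group $\cG=(\Omega\rightrightarrows G)$ I would recover a crossed module by setting
\[
H:=\ker(\bs)\trianglelefteq\Omega,\qquad \partial:=\bt|_{H}\colon H\to G,\qquad \varphi(g)(h):=u(g)\,h\,u(g)^{-1},
\]
where $u\colon G\to\Omega$ is the unit and all products are taken in the group $\Omega$. Here $H$ is a closed, embedded, normal Lie subgroup because $\bs$ is a submersive group homomorphism; $\partial$ is a homomorphism as a restriction of the homomorphism $\bt$; and since $u$ is a group homomorphism with $\bs\circ u=\mathrm{id}$, conjugation by $u(g)$ preserves $H$ (indeed $\bs(u(g)h\,u(g)^{-1})=g\,1_G\,g^{-1}=1_G$) and defines an action $\varphi\colon G\to\mathrm{Aut}(H)$ by automorphisms, with $\varphi(g_1g_2)=\varphi(g_1)\varphi(g_2)$ because $u$ is a homomorphism. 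The first crossed-module axiom is then immediate: $\partial(\varphi(g)h)=\bt(u(g)h\,u(g)^{-1})=g\,\partial h\,g^{-1}$, using $\bt(u(g))=g$.

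The main obstacle is the Peiffer identity $\varphi(\partial h_1)(h_2)=h_1h_2h_1^{-1}$, the only axiom that genuinely couples the group multiplication of $\Omega$ with the groupoid composition and thus cannot follow from group theory alone. I would derive it from the interchange law, which holds exactly because the multiplication $\circ\colon\cG\times\cG\to\cG$ is required to be a morphism of groupoids in Definition~\ref{catgr}: applying interchange to a suitable square built from $h_1$, $h_2$, a unit arrow $u(\partial h_1)$, and a groupoid inverse, the composites collapse so that group conjugation by $h_1$ is matched with conjugation by $u(\partial h_1)$, yielding the relation; this is the content of \cite{BL} and \cite[Sec.~3]{FB}. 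Finally I would check that the two constructions are mutually inverse: starting from a crossed module, $\ker(\bs)=\{(h,1_G)\}\cong H$, $\bt|_{\ker\bs}$ recovers $\partial$, and conjugating $(h,1_G)$ by $u(g)=(1_H,g)$ recovers $\varphi$; conversely, the splitting $u$ together with the factorization $\Omega=\ker(\bs)\cdot u(G)$ gives a canonical group isomorphism $\Omega\cong H\rtimes G$ identifying the original 2-group with the transformation groupoid built from its associated crossed module. Smoothness of all structure maps is automatic throughout, so the correspondence is an isomorphism of the two (smooth) categories.
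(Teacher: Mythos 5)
Your proposal is correct, but it is worth stating plainly that the paper itself does not prove this lemma: the statement is quoted from \cite{BL} and \cite[Sec.~3]{FB}, and the paper only \emph{recalls} the forward direction (crossed module of Lie groups to Lie 2-group) in Section~\ref{sec:cga}. Your forward construction --- the transformation groupoid $H\times G \rightrightarrows G$ for the action $h\mapsto \partial h\,\cdot$, with the semidirect product \eqref{semdirgr} on arrows, and the interchange law verified using both axioms of Def.~\ref{xmgr} --- is exactly the paper's construction, so there you and the paper coincide (and your observation that the Peiffer identity is what makes $\circ$ a group homomorphism is accurate). The converse direction and the check that the two constructions are mutually inverse are precisely what the paper delegates to the references, and your outline ($H:=\ker\bs$, $\partial:=\bt|_H$, $\varphi(g)=$ conjugation by $u(g)$, and the factorization $\Omega\cong H\rtimes G$ through the splitting $u$) follows their standard route. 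The only step you leave genuinely sketchy is also the crux, the Peiffer identity; it can be closed concretely as follows. Interchange with unit arrows gives $p\circ q=(p\circ u(e))\cdot(u(e)\circ q)=pq$ and likewise $p\circ q=(u(e)\circ q)\cdot(p\circ u(e))=qp$ whenever $\bs(p)=e$ and $\bt(q)=e$; hence $\ker\bs$ and $\ker\bt$ commute elementwise in the group $\Omega$. Now for $h_1,h_2\in\ker\bs$ the element $h_1u(\partial h_1)^{-1}$ lies in $\ker\bt$, so it commutes with $h_2$, and multiplying the identity $\bigl(h_1u(\partial h_1)^{-1}\bigr)h_2=h_2\bigl(h_1u(\partial h_1)^{-1}\bigr)$ on the right by $u(\partial h_1)h_1^{-1}$ yields $h_1\,\varphi\bigl((\partial h_1)^{-1}\bigr)(h_2)\,h_1^{-1}=h_2$, which, since $\varphi\bigl((\partial h_1)^{-1}\bigr)$ is a bijection of $H$, is exactly $\varphi(\partial h_1)k=h_1kh_1^{-1}$ for all $k\in H$. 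With that inserted, your argument is complete and is essentially the proof of the cited references rather than anything found in the paper.
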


We recall one direction of this correspondence in Section \ref{sec:cga}.

\bibliographystyle{habbrv}
\bibliography{bibPSR}
\end{document}